 \newtheorem{theorem}{Theorem}[section]
 \newtheorem{corollary}{Corollary}[section]
 \newtheorem{lemma}{Lemma}[section]
 \newtheorem{proposition}{Proposition}[section]
 \newtheorem{definition}{Definition}[section]
 \newtheorem{remark}{Remark}[section]
 \numberwithin{equation}{section}
\def\ddiv{\mathrm{div}}
\def\e{\epsilon}
\newcommand{\beq}{\begin{equation}}
\newcommand{\eeq}{\end{equation}}
\def\non{\nonumber }
\def\bea{\begin{eqnarray}}
\def\eea{\end{eqnarray}}
\def\ub{\mathbf{u}}
\begin{document}

\title{Well-posedness and long-time behavior of a non-autonomous Cahn-Hilliard-Darcy system with mass source modeling tumor growth}
\author{Jie Jiang \thanks{Wuhan
Institute of Physics and Mathematics, Chinese Academy of Sciences,
Wuhan 430071, HuBei Province, P.R. China,
\textsl{jiangbryan@gmail.com}}, \
Hao Wu\thanks{School of Mathematical Sciences and Shanghai Key Laboratory for Contemporary Applied
Mathematics, Fudan University, Shanghai 200433,\ P.R. China,
\textsl{haowufd@yahoo.com}}\quad and\ \
Songmu Zheng \thanks{Institute of Mathematics, Fudan University, Shanghai 200433,\ P.R. China,
\textsl{songmuzheng@yahoo.com}}
}

\date{\today}
\maketitle
\begin{abstract}
In this paper, we study an initial boundary value problem of the Cahn-Hilliard-Darcy system with a non-autonomous mass source term $S$ that models tumor growth.
We first prove the existence of global weak solutions as well as the existence of unique local strong solutions in both 2D and 3D.
Then we investigate the qualitative behavior of solutions in details when the spatial dimension is two. More precisely, we prove that the strong solution exists globally and it defines a closed dynamical process. Then we establish the existence of a minimal pullback attractor for translated bounded mass source $S$. Finally, when $S$ is assumed to be asymptotically autonomous, we demonstrate that any global weak/strong solution converges to a single steady state as $t\to+\infty$. An estimate on the convergence rate is also given.\medskip
\\
\noindent  {\bf Keywords}: Cahn-Hilliard-Darcy system; non-autonomous; well-posedness; long-time behavior.\\
\noindent \textbf{AMS Subject Classification}: 35G25, 35J20, 35B40, 35B45
\end{abstract}

\section{Introduction}

In this paper, we consider the following Cahn-Hilliard-Darcy (CHD in short) system that arises in the study of morphological evolution in solid tumour growth (see, e.g., \cite{FJCWLC,Wise2008}):
 \begin{eqnarray}
  && \phi_t+\ddiv(\ub\phi)=\Delta\mu+S,\quad \text{in}\ \ (\tau,T)\times \Omega,\label{1}\\
  && \mu= -\e^2\Delta \phi+f'(\phi) \quad \text{with} \quad f(\phi)=\frac{1}{4}\phi^4-\frac{1}{2}\phi^2,\label{mu}\\
  && \ub=-\nabla p+\frac{\gamma}{\e}\mu\nabla\phi,\qquad \quad \text{in} \ \ (\tau,T)\times \Omega,\label{2}\\
  && \ddiv \ub=S,\qquad \qquad \qquad \quad \text{in}\ \ (\tau,T)\times \Omega.\label{3}
\end{eqnarray}
Here, $\Omega$ is assumed to be a bounded domain in $\mathbb{R}^d$ ($d\in\{2,3\}$). $\tau\in \mathbb{R}$ denotes the initial time and $T>\tau$ is any given number. The CHD system \eqref{1}--\eqref{3} is subject to the following boundary and initial conditions:
 \bea
 && \partial_{\nu}\phi=\partial_{\nu}\mu=0,\quad\text{on }
\partial\Omega, \label{b1}\\
 && \ub\cdot\nu=0, \qquad\text{on }\partial\Omega, \label{b2}\\
 && \phi(t,x)|_{t=\tau}=\phi_\tau(x),\label{ini}
 \eea
 where $\nu$ is the unit outward normal vector to the boundary $\partial\Omega$.

The CHD system \eqref{1}--\eqref{3} can be viewed as the simplest version of those general diffuse interface models for tumor growth, which were derived based on the principle of mass conservation together with the second law of thermodynamics \cite{FJCWLC,Wise2008}. In the diffuse-interface (or phase-field) framework, the tumor volume fraction is denoted by a scalar order parameter $\phi$ and the sharp tumor/host interfaces are replaced by narrow transition layers, whose thickness is approximately characterized by a small parameter $\e>0$. Instead of tracking the interfaces explicitly, the dynamics of interfaces (now recognized as zero level sets of the order parameter) can be simulated on a fixed grid. Therefore, the diffuse-interface model has the advantage that it can easily describe topological transitions of interfaces (e.g., pinch-off and reconnection for two phase immiscible flow) in a natural way (see \cite{AMW, Good1, Good2, GPV, HH}).

Equation \eqref{1} is a convective Cahn-Hilliard type equation, which is derived from the mass conservation. The vector $\ub$ stands for the advective velocity field, while  the scalar functions $\mu$, $S$ stand for the chemical potential and the mass source term accounting for cell proliferation (or the rate of change in tumor volume, see \cite{FJCWLC,Wise2008}), respectively. The chemical potential $\mu$ is the variational derivative of the free energy functional:
 \beq
 E(\phi):=\int_\Omega\left( \frac{\e^2}{2}|\nabla\phi|^2+f(\phi)\right)dx,\non
  \eeq
  in which the function $f$ (see \eqref{mu}) can be viewed as a smooth double-well polynomial approximation of the physically relevant logarithmic potential (see \cite{58}).  Equation \eqref{2} for the advective velocity $\ub$ follows from a generalized Darcy's law, in which $\gamma$ is a positive constant measuring the excess adhesion force at the diffusive tumor/host tissue interfaces and $p$ is the pressure that consists of a combination of certain generalized Gibbs free energy and the gravitational potential. Equation \eqref{3} serves as a constraint for the velocity due to the possible mass exchange.

 We recall some previous works in the literature that are related to our problem. In biological applications, e.g., the phase-field models for tumour growth and wound healing \cite{KS08,FJCWLC}, the mass source term $S$ may depend on the order parameter $\phi$ in a quadratic way such that $S=\alpha \phi(1-\phi)$ ($\alpha>0$). When $S$ has a linear dependence on $\phi$, Equation \eqref{1} (neglecting the velocity $\ub$) is also known as the Cahn-Hilliard-Oono equation that accounts for long-range (nonlocal) interactions
in the phase separation process \cite{OP87}. Concerning the mathematical analysis for these generalized Cahn-Hilliard equations with mass source (with the convection under velocity $\ub$ being neglected), we refer to the recent work \cite{Mi11, Mi13, Mi14}, in which well-posedness and asymptotic behavior of the associated dynamical system have been investigated. When $S=0$, the CHD system \eqref{1}--\eqref{3} is referred to as the Cahn-Hilliard-Hele-Shaw (CHHS) system that has been used to describe two-phase flows in the Hele-Shaw geometry \cite{Good1,Good2} (see also \cite{SO92} for a similar model for spinodal decomposition of a binary fluid in a Hele-Shaw cell). The CHHS system with zero mass source term has been studied by many authors in the literature, both numerically and mathematically. For instance, an unconditionally energy stable and solvable finite difference scheme based on convex-splitting was proposed in \cite{Wise2010}, see also \cite{FW12} for an implicit Euler temporal scheme combined with a mixed finite element discretization in space. Concerning the analysis results, existence and uniqueness of global classical solutions in 2D torus and local classical solution in 3D torus were first established in \cite{WZ2013}. Besides, some blow-up criteria were also obtained in the three dimensional case. In \cite{WW2012}, long-time behavior of global solutions and stability of local minimizers in both 2D and 3D periodic setting were proved based on the \L ojasiewicz-Simon approach \cite{S83}. For the CHHS system in a 2D rectangle or in a 3D box under homogeneous Neumann boundary conditions, qualitative behaviors of strong solutions such as existence, uniqueness, regularity and asymptotic stability of the constant state $\frac{1}{|\Omega|}\int_\Omega \phi_\tau dx$ are studied in \cite{LTZ}. Quite recently, the connection between the Cahn-Hilliard-Brinkman (CHB) system and the CHHS system has been investigated in \cite{BCG} such that a suitable weak solution to the CHHS system can be shown to be a limit of solutions to the CHB system as the fluid viscosity goes to zero. Moreover, we would like to remark that the CHHS system can be viewed as a simplification of the full Cahn-Hilliard-Navier-Stokes (CHNS) system (see e.g., \cite{AMW,GPV,HH}) in the Hele-Shaw geometry. We refer to \cite{Ab,AAB,GG10,CFG,GG10a,Me12,ZWH} and the references therein for analytical results of the CHNS system on well-posedness as well as long-time behavior under various situations.

 However, to the best of our knowledge, there seems no analytical results in the literature concerning the CHD system \eqref{1}--\eqref{3} with a non-zero mass source term $S$. This is the main goal of the present paper. In this paper, we shall confine ourselves to the situation that $S$ is assumed to be a given source of mass, possibly depending on time $t$ and position $x$, but not on the parameter $\phi$. The case with more general mass source term will be treated in the future work.

 We summarize the main results of this paper as follows. First, under suitable integrability conditions on the mass source term $S$, we apply the Galerkin method to prove the existence of global weak solutions as well as the existence and uniqueness of local strong solutions to the CHD system \eqref{1}--\eqref{ini} in both 2D and 3D cases (see Theorem \ref{Th1}). Then we focus on the studies of qualitative behavior for solutions in the 2D case. It is shown that in 2D, problem \eqref{1}--\eqref{ini} actually admits a unique global strong solution $\phi$ in $H^2_N(\Omega)$ which defines a family of \emph{closed processes} $\{U(t,\tau)\}_{t\geq \tau}$ on $H^2_N(\Omega)$ (see Theorem \ref{Th2}). If the mass source $S$ is further assumed to be a \emph{translated bounded} function in $L^2_tL^2_x$ (see \eqref{detrans}), the family of processes $\{U(t,\tau)\}_{t\geq \tau}$ that are confined on the phase space $\mathcal{H}_{M}$ (see \eqref{H0}) turns out to admit a minimal \emph{pullback attractor} $\mathcal{A}$ (see Definition \ref{pull} and  Theorem \ref{T2}). In addition, we prove that under suitable decay assumption on $S$ (see \eqref{assS}), the dynamical process becomes \emph{asymptotically autonomous}. In this specific case, the $\omega$-limit set of each trajectory is actually a singleton. Namely, for arbitrary large initial datum, the global bounded solution will converge to a single steady state as $t\to+\infty$ and an estimate on the convergence rate is also given (see Theorem \ref{T3}).

Before concluding the introduction part, we would like to stress some new features of the present paper.
The presence of the mass source term $S$ brings us several difficulties in the mathematical analysis. First, unlike in \cite{FW12,WZ2013,WW2012,LTZ}, the velocity field $\ub$ is no longer divergence free. As a consequence, in order to prove the existence of weak/strong solutions, we use a modified Galerkin approximation different from that in \cite{LTZ}. Instead of solving the approximate velocity directly (by taking the Helmholtz-Leray orthogonal projection to eliminate the pressure term), we solve the pressure function that satisfies a Poisson type equation subject to homogeneous Neumann boundary condition (see \eqref{p}) and then obtain the velocity via the Darcy equation \eqref{2}. Besides, some new estimates for the pressure $p$ and its derivative (cf. \cite{WZ2013}) are derived, which play an important role in the subsequent proofs for existence of global solutions (see Lemma \ref{espp}).

Second, we study the long-time dynamics of problem \eqref{1}--\eqref{ini}
from the infinite dimensional dynamical system point of view \cite{TE}. The theory of global attractors has been generalized to the case of non-autonomous dynamical systems, for instance, the uniform attractors (see \cite{CV}) and pullback attractors (see \cite{CDF97, KR11} and the references therein). In this paper, we prove the existence of a pullback attractor for the CHD system \eqref{1}--\eqref{ini} under rather general assumptions on the time dependent mass source term $S$ in 2D.
Due to the mass conservation property \eqref{mm}, we cannot expect an absorbing set for initial data varying in the whole space. Instead, we first confine the associated dynamical process $\{U(t,\tau)\}_{t\geq \tau}$  on a suitable phase space $\mathcal{H}_M$ (see \eqref{H0}), which is a subset of $H^2_N(\Omega)$. Next, due to the highly nonlinear coupling of the CHD system, it seems difficult to obtain (strong) continuity of the process $\{U(t,\tau)\}_{t\geq \tau}$ in $\mathcal{H}_M$ but only a continuous dependence result in the lower-order space $H^1$ (see Lemma \ref{pconti}). This indicates that the process $\{U(t,\tau)\}_{t\geq \tau}$ is only \emph{closed} (see Definition \ref{clodef}, cf.  also \cite{PZ07} for the notion of closed semigroups). We then perform a nonstandard argument devised in \cite{GMR12} for closed processes to conclude our result (cf. \cite{Me10} for the case with closed cocycles). For this purpose, we deduce a generalized Gronwall type inequality (see Lemma \ref{Gron1}) to obtain some uniform estimates that lead to the existence of a pullback absorbing set (see Proposition \ref{plab}). We believe that Lemma \ref{Gron1} may have its own interests and can be applied to other problems with highly nonlinear structure. Besides, since the mass source term $S$ is only assumed to be translated bounded in $L^2_tL^2_x$, we are not able to obtain higher-order estimates of the solutions (and thus compactness) by taking derivatives of the PDEs. Instead, we use a continuity method for energy functions (see e.g., \cite{GMR12,MPR09}) to obtain the pullback asymptotic compactness (see Proposition \ref{compact}).

       At last, we study the long-time behavior for any bounded global weak/strong solution of the CHD system \eqref{1}--\eqref{ini} when the mass source $S$ becomes asymptotically autonomous. This is nontrivial, since the topology of the set of steady states (see \eqref{SSS}) can be rather complicated in high dimensional case and it may form a continuum (see e.g., \cite{CPDE1999}). Moreover, since our problem \eqref{1}--\eqref{ini} is now non-autonomous due to the presence of $S$, it no longer has a Lyapunov functional. Nevertheless, for global bounded solutions in $H^2$, it is possible to derive an energy inequality (see \eqref{BEL}), which enables us to characterize the corresponding $\omega$-limit sets. Based on that energy inequality, we are able to apply the \L ojasiewicz-Simon approach (cf. \cite{S83, CJ, FS, HT01}) to obtain the convergence of $\phi(t)$ as time goes to infinity as well as an estimate on convergence rate. Our convergence result generalizes the previous one in \cite{WW2012} for the homogeneous CHHS system in periodic setting. Moreover, we do not need to impose any additional assumption either on the initial datum for $\phi$ (e.g., the average of initial datum $\frac{1}{|\Omega|}\int_\Omega \phi_\tau dx$ being outside the spinodal region) or on the size of domain (being 'small') like in \cite{LTZ} in order to obtain certain asymptotical stability.

The rest of this paper is organized as follows. In Section 2, we introduce the functional settings and state the main results of this paper. Section 3 is devoted to the proof of the existence of global weak solutions as well as the existence and uniqueness of local strong solutions to problem \eqref{1}--\eqref{ini} in both 2D and 3D. In Section 4, we prove the existence of a unique global strong solution as well as the regularity of weak solutions in 2D. Then we show in Section 5 that the associated closed processes $\{U(t,\tau)\}_{t\geq \tau}$ on the phase space $\mathcal{H}_{M}$ admit a minimal pullback attractor $\mathcal{A}$, provided that the mass source $S$ is translated bounded in $L^2_tL^2_x$. Finally, in Section 6, we prove the convergence of global weak/strong solutions to a single steady state as $t\to +\infty$ and obtain an estimate on the convergence rate.

\section{Preliminaries and Main Results}
We first introduce some notations on the functional spaces. Let $\Omega\subset \mathbb{R}^d$, $d=2,3$, be either a smooth bounded domain or a convex polygonal or polyhedral domain. $L^q(\Omega)$, $1 \leq q \leq\infty$ denotes the usual
Lebesgue space and $\|\cdot\|_{L^q(\Omega)}$ denotes its norm. Similarly, $W^{m,q}(\Omega)$, $m \in \mathbb{N}$, $1 \leq q \leq \infty$, denotes the usual Sobolev space with norm $\|\cdot \|_{W^{m,p}(\Omega)}$. When $q=2$, we simply denote $W^{m,2}(\Omega)$ by $H^m(\Omega)$ and denote the norms $\|\cdot\|_{L^2(\Omega)}$, $\|\cdot\|_{H^m(\Omega)}$ by $\|\cdot\|$ and $\|\cdot\|_{H^m}$, respectively. The $L^2$-Bessel potential spaces are denoted by $H^s(\Omega)$, $s\in \mathbb{R}$, which are defined by restriction of distributions in $H^s(\mathbb{R}^d)$ to $\Omega$. If $X$ is a Banach space, we denote by $X'$ its dual and by $\langle \cdot, \cdot\rangle$ the associated duality product. The inner product in $L^2$ will be denoted by $(\cdot, \cdot)$. If $I$ is an interval of $\mathbb{R}^+$ and $X$ a Banach space, we use the function space $L^p(I;X)$, $1 \leq p \leq +\infty$, which consists of $p$-integrable
functions with values in $X$. Moreover, $C_w(I;X)$ denotes the topological vector space of all bounded and weakly continuous functions from $I$ to $X$, while $W^{1,p}(I,X)$ $(1\leq p<+\infty)$ stands for the space of all functions $u$ such that $u, \frac{du}{dt}\in L^p(I;X)$, where $\frac{du}{dt}$ denotes the vector valued distributional derivative of $u$. Bold characters will be used to denote vector spaces.

Given any function $v \in L^1(\Omega)$, we denote by $\overline{v} = |\Omega|^{-1}\int_\Omega v(x)dx$ its mean value. Then we define the space
$\dot L^2(\Omega) := \{ v \in L^2(\Omega): \overline{v}=0\}$ and $\dot v=\mathrm{P}_0 v := v - \overline{v}$ the orthogonal projection onto $\dot L^2(\Omega)$. Furthermore, we denote $\dot H^1(\Omega)=H^1(\Omega)\cap \dot L^2(\Omega)$, which is a Hilbert space with inner product $(u, v)_{\dot H^1}=\int_\Omega \nabla u\cdot \nabla v dx$ due to the classical Poincar\'e inequality for functions with zero mean. Its dual space is simply denoted by $\dot{H}^{-1}(\Omega)$.
Denote the spaces
$H^2_N=\{\varphi\in H^2(\Omega)\;|\;\;\partial_{\nu}\varphi=0 \;\;\text{on}\;\;\partial\Omega\}$ and $H^4_N=\{\varphi\in H^4(\Omega)\;|\;\;\partial_{\nu}\varphi=\partial_{\nu} \Delta \varphi=0 \;\;\text{on}\;\;\partial\Omega\}$. We can see that the operator $A=-\Delta$ with its domain $D(A)=H^2_{N}\cap \dot L^2(\Omega)$ is a positively defined, self-adjoint operator on $D(A)$ and the spectral theorem enables us to define powers $A^s$ of $A$ for $s\in\mathbb{R}$. Then space $(H^1(\Omega))'$ is endowed with the equivalent norm $\|v\|^2_{H^1(\Omega)'}=\|A^{-\frac12}(v-\overline{v})\|^2+|\overline{v}|^2$ and the norm on $\dot{H}^{-1}(\Omega)$ is given by $\|v\|^2_{\dot H^{-1}}=\|A^{-\frac12}(v-\overline{v})\|^2$.

Throughout the paper, without loss of generality, we assume that $\gamma=\epsilon=1$. $C\geq 0$ will stand for a generic constant and $\mathcal{Q}(\cdot)$ for a generic positive monotone increasing function. Special dependence will be pointed out in the text if necessary.

Following the constraint \eqref{3} and the boundary condition \eqref{b2}, we can easily see that a necessary condition for the external force $S$ is that
\beq\label{nec}\int_{\Omega} S(t,x)dx\equiv0.\eeq

Below we introduce the definitions of weak solution as well as strong solution to the CHD system \eqref{1}--\eqref{3}.
\begin{definition} Assume $d=2,3$.

(i) Let $T>\tau$, $\phi_\tau\in H^1(\Omega)$ and $S\in L^{2}(\tau,T;
\dot{L}^2(\Omega))$ be given. A triplet $(\phi, \ub, p)$ is a weak solution to the system \eqref{1}--\eqref{3} endowed with boundary and initial conditions \eqref{b1}--\eqref{ini}, if
$$\phi\in C_w([\tau,T]; H^1(\Omega))\cap L^2(\tau,T; H^{3}(\Omega)),\quad \partial_t\phi\in L^{\frac{8}{5}}(\tau,T;(H^1(\Omega))'),$$
$$\ub\in L^2(\tau,T; \mathbf{L}^2(\Omega)), \ \ p\in L^\frac85(\tau, T; H^1(\Omega))$$ such that
\beq \langle \phi_t, \psi\rangle+\langle\ddiv(\ub \phi), \psi\rangle+(\nabla \mu, \nabla \psi)=(S,\psi),\ \ \forall\, \psi\in H^1(\Omega), \ \ \text{a.e.}\ t\in [\tau,T],\non\eeq
\beq (\nabla p, \nabla\varphi)= (S, \varphi)+(\mu\nabla\phi,\nabla\varphi),\quad \forall\, \varphi\in H^1(\Omega), \ \ \text{a.e.}\ t\in [\tau,T],\non
\eeq
\beq (\ub, \mathbf{v})= ( -\nabla p +\mu\nabla \phi, \mathbf{v}),\quad \forall\, \mathbf{v}\in \mathbf{L}^2(\Omega), \ \ \text{a.e.}\ t\in [\tau,T],\non
\eeq
with $\mu\in L^2(\tau, T; H^1(\Omega))$ given by \eqref{mu}, and
\beq \partial_\nu \phi=0,\quad \text{a.e. on}\ \partial\Omega\times(\tau, T),\non\eeq
\beq \phi|_{t=\tau}=\phi_\tau,\quad\text{a.e. in}\ \Omega.\non\eeq

(2) Let $T>\tau$, $\phi_\tau\in H^2_N(\Omega)$ and $S\in L^{2}(\tau,T;
\dot{L}^2(\Omega))$ be given. A triplet $(\phi, \ub, p)$ is a strong solution to the system \eqref{1}--\eqref{3} endowed with boundary and initial conditions \eqref{b1}--\eqref{ini}, if
$$\phi\in C([\tau,T];H^2_N(\Omega))\cap L^2(\tau,T;H^4_N(\Omega)),\quad  \phi_t\in L^2(\tau, T; L^2(\Omega)),$$
$$ \ub\in   L^2(\tau,T; \mathbf{H}^1(\Omega)),\quad p\in L^2(\tau, T; H^2(\Omega)),$$
$$\mu\in C([\tau, T]; L^2(\Omega))\cap L^2(\tau, T; H^2(\Omega)),$$
such that
\beq\phi_t+\ddiv(\ub\phi)=\Delta\mu+S,\quad \text{in}\ \ L^2(\Omega)
\quad\text{ a.e.} \ \ t\in [\tau, T]\non\eeq  with $\mu$ given by \eqref{mu},
\beq-\Delta p=S-\ddiv{(\mu\nabla\phi)}, \quad \text{in}\ \ L^2(\Omega)
\quad\text{ a.e.} \ \ t\in [\tau, T],
\non
 \eeq
\eqref{2} holds in $\mathbf{H}^1(\Omega)$ for a.e. $t\in[\tau, T]$ and
\beq \partial_\nu \phi=\partial_\nu \mu=\partial_\nu p=0,\quad \text{a.e. on}\ \partial\Omega\times(\tau, T),\non\eeq
\beq \phi|_{t=\tau}=\phi_\tau,\quad\text{a.e. in}\ \Omega.\non\eeq
\end{definition}
\begin{remark}
It is easy to see that the
 mean of any weak/strong solution $\phi$ over $\Omega$ is conserved in time, i.e.,
 \beq\label{mm}\overline{\phi}(t):=\frac{1}{|\Omega|}\int_{\Omega}\phi(t,x)dx\equiv\frac{1}{|\Omega|}\int_{\Omega}\phi_\tau dx:=M.\eeq
\end{remark}

Now we are in a position to state our main results.
\begin{theorem} \label{Th1} Suppose that $d=2,3$.

(i) For any $\phi_\tau \in H^1(\Omega)$ and $S\in L^{2}(\tau,T;
\dot{L}^2(\Omega))$ with arbitrary $T\in (\tau, +\infty)$, problem \eqref{1}--\eqref{ini} admits at least one global weak
solution $(\phi, \ub, p)$ on $[\tau, T]$.

(ii) For any $\phi_\tau \in H^2_N(\Omega)$, $S\in L^{2}(\tau,T;
\dot{L}^2(\Omega)) \cap L^{\infty}(\tau,T; \dot{H}^{-1}(\Omega))$ with arbitrary $T\in (\tau, +\infty)$, there exist a time $T^*\in(\tau, T)$ such that problem \eqref{1}--\eqref{ini} admits a strong solution $(\phi, \ub, p)$ on $[\tau, T^*]$ that is unique up to an additive function of $t$ to $p$.
\end{theorem}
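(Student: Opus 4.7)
The plan is to construct approximate solutions via a Faedo--Galerkin scheme based on the eigenfunctions $\{w_j\}_{j\geq 1}$ of the Neumann Laplacian, but with the crucial twist indicated in the introduction: rather than using Leray projection (which is unavailable here since $\ddiv \ub=S\neq 0$), I solve separately for the pressure and then recover the velocity. Concretely, seeking $\phi^n(t,x)=\sum_{j=1}^n a_j^n(t)w_j(x)$ I project \eqref{1} onto $\mathrm{span}\{w_1,\dots,w_n\}$; at each time, given $\phi^n$ and $\mu^n=-\Delta\phi^n+f'(\phi^n)$, I define $p^n$ as the unique zero-mean solution of the Neumann problem
\[
-\Delta p^n=S-\ddiv(\mu^n\nabla\phi^n),\qquad \partial_\nu p^n=0 \text{ on }\partial\Omega,
\]
which is solvable because $\int_\Omega S\,dx=0$ and $\mu^n\nabla\phi^n\cdot\nu=0$ on $\partial\Omega$ (since $\partial_\nu\phi^n=0$); then set $\ub^n=-\nabla p^n+\mu^n\nabla\phi^n$, so that automatically $\ddiv\ub^n=S$ and $\ub^n\cdot\nu=0$. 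This reduces the Galerkin problem to a system of ODEs for the coefficients $a_j^n$, for which Cauchy--Lipschitz gives local-in-time solvability; \emph{a priori} estimates will extend it to $[\tau,T]$.

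The core of part (i) is the basic energy identity obtained by testing the projected $\phi^n$-equation with $\mu^n$ and rewriting $\ddiv(\ub^n\phi^n)=\ub^n\cdot\nabla\phi^n+\phi^n S$. Using $\ub^n\cdot(\mu^n\nabla\phi^n)=|\ub^n|^2+\ub^n\cdot\nabla p^n$ together with the boundary condition $\ub^n\cdot\nu=0$ and an integration by parts for $\int\ub^n\cdot\nabla p^n$, one finds a structure of the form
\[
\frac{d}{dt}E(\phi^n)+\|\nabla\mu^n\|^2+\|\ub^n\|^2\leq \bigl|\textstyle\int S(\mu^n+p^n-\phi^n\mu^n)\,dx\bigr|,
\]
which, combined with the pressure estimate $\|\nabla p^n\|\lesssim \|S\|_{\dot H^{-1}}+\|\mu^n\nabla\phi^n\|$ of Lemma \ref{espp} and Gagliardo--Nirenberg interpolation in dimension $d\leq 3$, yields uniform bounds for $\phi^n$ in $L^\infty(\tau,T;H^1)$, for $\mu^n$ in $L^2(\tau,T;H^1)$, for $\ub^n$ in $L^2(\tau,T;\mathbf{L}^2)$, and, after elliptic regularity applied to $\mu^n=-\Delta\phi^n+f'(\phi^n)$, for $\phi^n$ in $L^2(\tau,T;H^3)$. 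A duality estimate on $\phi_t^n$ via the equation then delivers a $W^{1,p}$-in-time bound of the appropriate fractional exponent. Aubin--Lions compactness provides strong $L^2_tL^2_x$ convergence of $\phi^n$, which suffices to pass to the limit in the cubic nonlinearity $f'(\phi^n)$ and in the products $\mu^n\nabla\phi^n$ and $\ub^n\phi^n$ by the weak-strong convergence principle.

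For part (ii) I upgrade the scheme to yield strong solutions starting from $\phi_\tau\in H^2_N$. The additional ingredient is a higher-order estimate obtained by testing the projected equation with $\partial_t\phi^n$ (or equivalently $-\Delta\mu^n$), exploiting $f''(\phi)\geq -1$ to control the $f'$ term, and invoking the elliptic identity $\|\Delta\phi^n\|\lesssim\|\mu^n\|+\|f'(\phi^n)\|$. The assumption $S\in L^\infty(\tau,T;\dot H^{-1})$ enters precisely here, providing a pointwise-in-time bound for $\nabla p^n$ that is needed to close the estimate. The convection and pressure terms generate a nonlinearity of the form $\mathcal{Q}(\|\Delta\phi^n\|^2)$ with super-linear $\mathcal{Q}$, so a standard comparison ODE argument only yields a local-in-time uniform bound on some interval $[\tau,T^*]$. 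Uniqueness up to an additive function of $t$ in $p$ follows by writing the difference of two strong solutions, testing the difference equation in $\dot H^{-1}$ (where the cancellation $\int(\phi_1-\phi_2)=0$ is automatic), estimating the convective difference through the already-established $H^2$-regularity, and closing by Gronwall; the pressure ambiguity is intrinsic since only $\nabla p$ appears in \eqref{2}.

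The step I expect to be the main obstacle is the pressure estimate of Lemma \ref{espp}, especially its $H^2$-version needed for strong solutions. One must extract sharp bounds on $p^n$ and $\nabla p^n$ from the Neumann problem despite the nonzero right-hand side contribution from $S$; this forces careful use of $\dot H^{-1}$-norms for the source, elliptic regularity of the Laplacian with Neumann data, and a Sobolev-interpolation control of the nonlinear term $\mu^n\nabla\phi^n$. The delicate balance between the integrability assumed on $S$ (which is only $L^2_t\dot L^2_x\cap L^\infty_t\dot H^{-1}_x$) and the regularity transferred to $p^n$ is exactly what dictates the respective functional settings in (i) and (ii).
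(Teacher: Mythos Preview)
Your proposal is correct and tracks the paper closely for part~(i): the Galerkin scheme with the pressure obtained from the Neumann problem, the energy identity from testing with $\mu^n$ combined with the velocity equation tested by $\ub^n$ (so that $\int \ub^n\cdot\nabla p^n = -\int p^n S$), and the passage to the limit via Aubin--Lions are exactly what the paper does. One point you leave implicit but which the paper makes explicit is the cancellation of the $\overline{\mu^n}$ contributions: writing $p^n=A^{-1}S-A^{-1}\ddiv\big((\mu^n-\overline{\mu^n})\nabla\phi^n\big)+\overline{\mu^n}(\phi^n-\overline{\phi^n})$, the term $\overline{\mu^n}\int S\phi^n$ appearing in $\int Sp^n$ cancels exactly against the same term coming from $-\int S\phi^n\mu^n$, which is what allows the Gronwall argument to close with only $\|\nabla\mu^n\|$ on the right.

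For part~(ii) there is a tactical difference. The paper tests the approximate equation by $\Delta^2\phi_n$, which immediately produces the dissipation $\|\Delta^2\phi_n\|^2$ and the scalar inequality $y_n'\leq C_0y_n^5$ with $y_n=\|\Delta\phi_n\|^2+1$. Your proposed test function $-\Delta\mu^n$ is essentially equivalent (since $\Delta\mu_n=-\Delta^2\phi_n+\Pi_n\Delta f'(\phi_n)$ and $\Pi_n$ commutes with $\Delta$), so you recover the same inequality after one extra step; testing by $\partial_t\phi^n$ is one further remove but also closes. Note, however, that the bound $f''\geq -1$ is not the mechanism used here: the cubic is handled by a direct estimate of $\|\Delta(\phi_n^3-\phi_n)\|^2$ via Agmon and Gagliardo--Nirenberg. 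For uniqueness the paper follows \cite{LTZ}, testing the difference system by $\phi$ and by $\mu$ in $L^2\times H^1$, whereas your $\dot H^{-1}$ route is a standard and equally valid alternative for Cahn--Hilliard systems; both close under the available strong-solution regularity.
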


When the spatial dimension is two, more comprehensive information about problem \eqref{1}--\eqref{ini} can be achieved. First, we can prove the existence of a unique global strong solution, i.e.,

\begin{theorem} \label{Th2} Suppose that $d=2$. For any $\phi_\tau \in H^2_N(\Omega)$, $S\in L^{2}_{loc}(\mathbb{R}; \dot{L}^2(\Omega))$ and arbitrary $T\in(\tau, +\infty)$, problem \eqref{1}--\eqref{ini} admits a global strong solution $(\phi, \ub, p)$ on $[\tau, T]$ that is unique up to an additive function of $t$ to $p$. The global strong solution defines a family of closed processes $\{U(t,\tau)\}_{t\geq \tau}$ on $H^2_N(\Omega)$ such that
\beq
\nonumber U(t,\tau)\phi_\tau=\phi(t),\quad \forall\, t\in [\tau, T].
\eeq
\end{theorem}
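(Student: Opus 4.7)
The plan is to bootstrap the local strong solution provided by Theorem \ref{Th1}(ii) into a global one by establishing time-uniform a priori estimates on $[\tau,T]$ valid in the 2D setting, and then to verify that the resulting solution map satisfies the axioms of a closed process. Since the existence of a local strong solution on some interval $[\tau, T^*]$ is already available, it suffices to rule out blow-up, i.e., to show that $\|\phi(t)\|_{H^2}$ cannot escape to infinity in finite time, which by a standard continuation argument forces $T^* = T$ for arbitrary $T > \tau$.

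The first step is to recall the basic energy identity obtained by testing \eqref{1} with $\mu$ and \eqref{2} with $\ub$, combined with \eqref{3}; the source $S$ produces additional terms involving $(S,\mu)$ and $(S, p)$ which, after using $S\in \dot L^2$ and the pressure estimates of the type already derived (cf.\ Lemma \ref{espp} in the paper), yield a Gronwall-type bound on $\|\phi(t)\|_{H^1}$ and on $\int_\tau^T \|\phi\|_{H^3}^2\, dt$ on every finite interval. This bound is local in time because $S$ is only locally $L^2$ in time. The key second step is the $H^2$ estimate: differentiating (or testing with $\Delta^2\phi$, depending on convenience) one has to control the convective term $\ddiv(\ub\phi)$, which after integration by parts is dominated by $\|\ub\|_{\mathbf{L}^4}\|\nabla\phi\|_{L^4}$ and similar quantities. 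Here the 2D structure is essential: Ladyzhenskaya's inequality $\|v\|_{L^4}\le C\|v\|^{1/2}\|v\|_{H^1}^{1/2}$ and Agmon's inequality $\|\phi\|_{L^\infty}\le C\|\phi\|_{H^1}^{1/2}\|\phi\|_{H^2}^{1/2}$ allow one to interpolate and absorb the highest-order terms into the dissipation $\|\nabla\Delta\phi\|^2$, producing a differential inequality of the form
\[
\frac{d}{dt}\|\Delta\phi\|^2 + c\|\nabla\Delta\phi\|^2 \le \mathcal{Q}(\|\phi\|_{H^1})\bigl(1+\|S\|^2+\|\Delta\phi\|^2\bigr)\|\Delta\phi\|^2 + \mathcal{Q}(\|\phi\|_{H^1})\|S\|^2,
\]
to which Gronwall's lemma applies. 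This is the main technical obstacle, because $\ub$ depends on $\phi$ in a nonlocal way through the pressure equation $-\Delta p = S - \ddiv(\mu\nabla\phi)$, so every bound for $\ub$ must be traced back carefully to $\phi$. Once $\|\phi\|_{L^\infty(\tau,T;H^2)}\cap L^2(\tau,T;H^4)$ is controlled, the bounds on $\mu$, $\ub$ and $p$ in the strong-solution class follow from elliptic regularity applied to \eqref{mu} and to the Poisson problem for $p$.

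For uniqueness up to an additive function of $t$ added to $p$, I would take the difference $(\phi_1-\phi_2, \ub_1-\ub_2, p_1-p_2)$ of two strong solutions with the same initial datum, test the difference equation for $\phi$ with $A^{-1}(\phi_1-\phi_2)$ in $\dot H^{-1}$ (which is legitimate since both solutions share the same mean by \eqref{mm}), and use the already established $H^2$-bounds on the individual solutions together with 2D interpolation inequalities to close a Gronwall estimate for $\|\phi_1-\phi_2\|_{\dot H^{-1}}^2$. The pressure $p$ enters the system only through $\nabla p$, so it is determined only up to a time-dependent constant, and the stated form of uniqueness follows.

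Finally, setting $U(t,\tau)\phi_\tau:=\phi(t)$, the semigroup identities $U(\tau,\tau)=\mathrm{Id}$ and $U(t,s)U(s,\tau)=U(t,\tau)$ for $\tau\le s\le t$ follow from uniqueness and the fact that the strong solution at any intermediate time still lies in $H^2_N(\Omega)$. To obtain that the process is \emph{closed} in the sense of Definition \ref{clodef} rather than strongly continuous, I would prove the auxiliary continuous-dependence estimate in the weaker norm of $H^1$ (this is essentially the same calculation as for uniqueness, upgraded one level of regularity, and is the content anticipated in Lemma \ref{pconti}); then if $\phi_{\tau,n}\to\phi_\tau$ in $H^2_N$ and $U(t,\tau)\phi_{\tau,n}\to \psi$ weakly in $H^2_N$, the $H^1$ continuous dependence identifies the weak limit with $U(t,\tau)\phi_\tau$, which is precisely the closedness property required.
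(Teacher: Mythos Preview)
Your overall strategy---establish the $H^1$-level energy estimate, then test with $\Delta^2\phi$ and use 2D interpolation (Ladyzhenskaya, Agmon) to close a Gronwall inequality for $\|\Delta\phi\|^2$, finally verify closedness via a lower-order continuous-dependence estimate---is exactly the route the paper takes in Section~4. Two points, however, deserve correction.

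First, you invoke Theorem~\ref{Th1}(ii) to obtain a local strong solution and then continue it. But Theorem~\ref{Th1}(ii) requires $S\in L^\infty(\tau,T;\dot H^{-1})$ in addition to $S\in L^2(\tau,T;\dot L^2)$, whereas Theorem~\ref{Th2} assumes only $S\in L^2_{loc}(\mathbb{R};\dot L^2)$. The paper avoids this mismatch by not bootstrapping from the local result at all: it returns to the Galerkin scheme and derives the 2D-specific $H^2$ estimate \eqref{ap4} directly on the approximants (Lemma~\ref{regww}), where the extra hypothesis on $S$ is not needed because the 2D pressure/velocity estimates are milder. Your continuation argument would need this repair.

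Second, and more substantively, your proposed uniqueness argument---test the difference equation with $A^{-1}(\phi_1-\phi_2)$ and close in $\dot H^{-1}$---is not the paper's method and is genuinely delicate here. The difficulty is that the difference velocity $\ub=\ub_1-\ub_2$ satisfies $\|\ub\|\le \|\mu\nabla\phi_1\|+\|\mu_2\nabla\phi\|$, and the first term contains $\|\Delta\phi\|$ through $\mu=-\Delta\phi+f'(\phi_1)-f'(\phi_2)$; at the $\dot H^{-1}$ level the only dissipation available is $\|\nabla\phi\|^2$, which cannot absorb a full $\|\Delta\phi\|$. The paper instead (Lemma~\ref{pconti}) tests the difference equation simultaneously with $\phi$ and with $\mu$, and the difference Darcy law with $\ub$; this produces both $\|\nabla\mu\|^2$ and $\|\ub\|^2$ as dissipation, which is precisely what is needed to control the cross terms $\int\mu_2\nabla\phi\cdot\ub$ and $\int\phi\,\ub_2\cdot\nabla\mu$. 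The resulting continuous-dependence estimate is in $H^1$ (not $\dot H^{-1}$), and this is what yields both uniqueness and the closedness of the process, exactly as you anticipate in your final paragraph.
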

Consider the following phase space:
\beq
\label{H0}
\mathcal{H}_{M}=\left\{\phi\in H^2_N(\Omega),\ \ |\overline{\phi}|\leq M\right\}, \quad M \geq 0.
\eeq
For the external source term $S$, we consider the Banach space $L^2_b(\mathbb{R}; \dot L^2(\Omega))$ defined by
\beq
L^2_b(\mathbb{R}; \dot L^2(\Omega))=\left\{S\in L^2_{loc}(\mathbb{R};\dot L^2(\Omega))\,:\,\|S\|_{L^2_b(\mathbb{R};\dot L^2(\Omega))}^2:=\sup\limits_{t\in\mathbb{R}}\int_{t}^{t+1}\|S(s)\|^2ds<\infty\right\},\label{detrans}
\eeq
which is the subspace of $L^2_{loc}(\mathbb{R};\dot L^2(\Omega))$ of translation bounded functions.

Then we can prove that

\begin{theorem}\label{T2} Let $d=2$. For any $S\in L^2_{b}(\mathbb{R},\dot L^2(\Omega))$,  the family of closed processes $\{U(t,\tau)\}_{t\geq \tau}$ associated with problem \eqref{1}--\eqref{ini} defined on the phase space $\mathcal{H}_{M}$ admits a minimal pullback attractor $\mathcal{A}$ in the sense of Definition \ref{pull}.
\end{theorem}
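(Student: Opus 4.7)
The plan is to apply the abstract framework for closed pullback processes developed in \cite{GMR12}, which reduces Theorem \ref{T2} to two ingredients: (i) a pullback absorbing family in $\mathcal{H}_M$ and (ii) pullback asymptotic compactness of $\{U(t,\tau)\}_{t\geq\tau}$. The closedness of the process on $H^2_N(\Omega)$ is already furnished by Theorem \ref{Th2}, together with the weaker continuous dependence in $H^1$ of Lemma \ref{pconti}, so the work concentrates on uniform-in-past energy estimates and on compactness of trajectories.

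For the absorbing set I would derive a dissipative differential inequality for a perturbed energy $\mathcal{E}(\phi)=E(\phi)+\kappa\|\phi-M\|^2$, obtained by testing \eqref{1} with $\mu-\overline{\mu}$ and exploiting the conservation $\overline{\phi}\equiv M$. The delicate contribution is the non-solenoidal advection: using \eqref{2}--\eqref{3} the advective term splits into a velocity-dissipation $\|\ub\|^2$ plus a source correction involving $S$, $\phi$ and $\mu$, while the pressure is controlled from the Poisson problem $-\Delta p=S-\ddiv(\mu\nabla\phi)$ with homogeneous Neumann data via the elliptic estimates of Lemma \ref{espp}. This produces an estimate of the schematic form $\tfrac{d}{dt}\mathcal{E}(\phi)+c(\|\nabla\mu\|^2+\|\ub\|^2)\leq C\|S\|^2+\mathcal{Q}(\mathcal{E}(\phi))\|S\|^2$. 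Because $S$ is only translation bounded, I would then invoke the generalized Gronwall-type inequality of Lemma \ref{Gron1} to convert the merely local-in-time integrability of $\|S\|^2$ into a uniform bound on $\mathcal{E}(\phi(t))$ depending only on $\|S\|_{L^2_b}$ and on the $\mathcal{H}_M$-norm of the initial datum; a further round of testing (e.g., by $-\Delta\phi$) combined with elliptic regularity for \eqref{mu} lifts this to a uniform $H^2_N$-bound and delivers the absorbing family of Proposition \ref{plab}.

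For pullback asymptotic compactness, higher a priori bounds on $\partial_t\phi$ are unavailable when $S$ is merely $L^2_b$, so I would use the energy-equation method of Ball as in \cite{MPR09, GMR12}. Given $\tau_n\to-\infty$ and $\phi_{0,n}$ in the absorbing set, extract a subsequence with $\phi_n(t):=U(t,\tau_n)\phi_{0,n}\rightharpoonup\phi(t)$ in $H^2_N$. To upgrade to strong convergence, write the energy identity on $[t-1,t]$ and pass to the limit, using weak lower semicontinuity on the dissipation and $\int_{t-1}^t\|S(s)\|^2\,ds\leq\|S\|^2_{L^2_b}$, together with pointwise strong convergence on a dense countable subset of times (obtained by diagonal extraction and compact embedding from the absorbing set) to control the source contributions. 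This yields $\limsup_n\mathcal{E}(\phi_n(t))\leq\mathcal{E}(\phi(t))$; combined with weak convergence and the Hilbert-space structure, it forces strong convergence in $H^1$, and elliptic regularity applied to \eqref{mu} promotes it to $H^2_N$, establishing Proposition \ref{compact}.

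Plugging (i) and (ii) into the abstract theorem for closed processes from \cite{GMR12} then produces the minimal pullback attractor $\mathcal{A}\subset\mathcal{H}_M$ in the sense of Definition \ref{pull}. The hardest step in my view is twofold: first, finding a coercive-dissipative splitting of the energy estimate compatible with the hypotheses of the generalized Gronwall Lemma \ref{Gron1}, since the velocity-pressure coupling fed through Lemma \ref{espp} re-enters quadratically on the right-hand side and must not absorb the dissipation; second, carrying out the energy-method compactness argument purely at the integrated level, because the only continuity guaranteed by Lemma \ref{pconti} lives in the weaker $H^1$ topology, not in $\mathcal{H}_M$ itself, so every limit passage must be compatible with closedness rather than strong continuity of the process.
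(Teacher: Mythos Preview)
Your overall architecture is right: apply the abstract result of \cite{GMR12} for closed processes, and reduce to (i) a pullback absorbing family and (ii) pullback asymptotic compactness. Your identification of Lemma~\ref{Gron1} as the engine for (i) is also correct. However, there is a genuine gap in your compactness argument, and a smaller but real issue in your absorbing-set construction.

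\textbf{Compactness.} The energy method you propose is run at the level of $\mathcal{E}(\phi)\sim\|\phi\|_{H^1}^2$, yielding $\limsup_n\mathcal{E}(\phi_n(t))\leq\mathcal{E}(\phi(t))$ and hence strong $H^1$ convergence. But strong $H^1$ convergence is \emph{already} available from Aubin--Lions (the paper obtains $\phi^n\to\phi$ in $C([t-2,t];H^1)$ directly), so this step adds nothing. The crucial claim that ``elliptic regularity applied to \eqref{mu} promotes it to $H^2_N$'' does not go through: from $-\Delta\phi_n=\mu_n-f'(\phi_n)$ you would need strong $L^2$ convergence of $\mu_n(t)$ at the fixed time $t$, and nothing in your scheme supplies this---$\mu_n$ is only controlled in $L^2_tH^1_x$, not pointwise in time. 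The paper avoids this by running the energy-continuity argument \emph{directly at the $H^2$ level}: it derives a differential inequality for $\|\Delta\phi^n\|^2$ (see \eqref{ap4c}) with right-hand side $F_1+F_2+F_3$ depending only on $\|\phi^n\|_{H^1}$, $\|\Delta\phi^n\|$ and $\|S\|$, defines the non-increasing functionals $J_n(s)=\|\Delta\phi^n(s)\|^2-C_\Omega\int_{t-2}^s(F_1+F_2+F_3)$, shows $J_n(s)\to J(s)$ a.e.\ via the strong convergences already in hand, and then uses monotonicity plus weak $H^2$ convergence to force $\|\Delta\phi^n(t_n)\|\to\|\Delta\phi(t^*)\|$. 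This is the step you are missing.

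\textbf{Absorbing set.} Your perturbed energy $\mathcal{E}(\phi)=E(\phi)+\kappa\|\phi-M\|^2$ is not the right auxiliary quantity. Testing \eqref{1} by $\phi-\overline{\phi}$ produces $\|\Delta\phi\|^2+\int f''(\phi)|\nabla\phi|^2$ on the left, which does not directly dissipate $E(\phi)$ (in particular it does not control $\int f(\phi)$). The paper instead tests \eqref{3m} by $A^{-1}(\phi-\overline{\phi})$, which yields exactly $\|\nabla\phi\|^2+\int f'(\phi)(\phi-\overline{\phi})\geq \|\nabla\phi\|^2+2\int f(\phi)-C$, i.e.\ dissipation of $E(\phi)$ itself. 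Combining this with the basic energy estimate gives an inequality of the form $\frac{d}{dt}\Psi_1+C_{10}\Psi_1\leq C_{11}\|S\|^2\Psi_1^{2/3}+C_{11}(1+\|S\|^2)$ with $\Psi_1\sim E(\phi)+\|A^{-1/2}(\phi-\overline{\phi})\|^2$, to which Lemma~\ref{Gron1} (with $\omega=\tfrac{2}{3}$) applies. Your schematic $\mathcal{Q}(\mathcal{E})\|S\|^2$ must resolve to a \emph{sublinear} power of the energy for Lemma~\ref{Gron1} to be applicable; the $H^{-1}$ perturbation is what makes this work cleanly.
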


Furthermore, if the dynamical process becomes \emph{asymptotically autonomous} under suitable assumptions on the external source $S$, we can prove that the global weak (or strong) solution converges to a single steady state as $t\to+\infty$ and obtain an estimate on the convergence rate.

\begin{theorem}\label{T3} Let $d=2$. Assume that $S\in L^2(\tau,+\infty;\dot L^2(\Omega))$ and satisfies the following condition
\beq
\sup\limits_{t\geq\tau}(1+t)^{1+\rho}\int_t^{+\infty}\|S\|^2 ds<+\infty,\quad\text{for some } \rho>0.\label{assS}
\eeq
Let $(\phi,\ub, p)$ be a global weak (or strong) solution to problem \eqref{1}--\eqref{ini}. Then there exists a steady state  $\phi_\infty \in H^2_N(\Omega)$, which is a solution to the stationary Cahn--Hilliard equation
\beq\label{stat}
\begin{cases}-\Delta\phi_{\infty}+f'(\phi_{\infty})=\int_{\Omega}f'(\phi_{\infty})dx,\quad\text{in } \Omega,\\
\partial_{\nu}\phi_{\infty}=0,\quad \text{on }\partial\Omega,\\
\int_{\Omega}\phi_{\infty}dx=\int_{\Omega}\phi_\tau dx
\end{cases}
\eeq
such that as $t\to+\infty$
$$
\begin{cases} \phi(t)\to \phi_\infty \quad \text{strongly in } H^s(\Omega), \ s<2,\\
\phi(t)\rightharpoonup \phi_\infty \quad \text{weakly in } H^2(\Omega).
\end{cases}
$$
Moreover, the following convergence rate holds
\beq
\|\phi(t)-\phi_{\infty}\|_{H^{s}}\leq C(1+t)^{-\frac{2-s}{3}\min\{\frac{\theta}{1-2\theta},\frac{\rho}{2}\}},\quad\forall\, t\geq \tau+1, \quad s\in [-1,2).\label{rate}
\eeq Here $C$ is a constant depending on $\|\phi_\tau\|_{H^1}$, $\int_\tau^{+\infty} \|S\|^2 d\tau$ and $\Omega$, $\theta\in(0,\frac{1}{2})$ is a constant depending on $\phi_{\infty}$.
\end{theorem}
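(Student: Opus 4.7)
The plan is to apply the \L ojasiewicz--Simon gradient inequality to the Cahn--Hilliard free energy $E(\phi)$ restricted to the mass constraint $\overline{\phi}=M$, modified so as to absorb the non-autonomous perturbation from $S$. By the 2D smoothing effect of Theorem \ref{Th2}, after possibly shifting the initial time I may and do assume that $\phi$ is a global strong solution uniformly bounded in $H^2_N(\Omega)$ on $[\tau,+\infty)$.

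\emph{Step 1 (energy identity and integrability of the dissipation).} Testing \eqref{1} with $\mu$, integrating by parts using $\partial_\nu\mu=0$ and $\ub\cdot\nu=0$, rewriting the convective term through \eqref{3} and invoking Darcy's law \eqref{2} to extract $\|\ub\|^2$, I would derive
\begin{equation}
\frac{d}{dt}E(\phi)+\|\nabla\mu\|^2+\|\ub\|^2=\bigl(S,\mu(1-\phi)\bigr)+(S,p),\notag
\end{equation}
where $p$ is normalized by $\overline{p}=0$. Using the $L^\infty$ bound on $\phi$ (from the $H^2$ regularity in 2D), the elliptic estimate $\|\nabla p\|\le C(\|S\|+\|\mu\nabla\phi\|)$ on the pressure Poisson equation, Poincar\'e's inequality on the mean-zero parts, and the a~priori bound $|\overline{\mu}|=|\overline{f'(\phi)}|\le C$, the right-hand side can be absorbed to give $\frac{d}{dt}E(\phi)+\tfrac12\|\nabla\mu\|^2+\tfrac12\|\ub\|^2\le C\|S(t)\|^2$. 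Integrating and using \eqref{assS} yields $\int_\tau^{+\infty}(\|\nabla\mu\|^2+\|\ub\|^2)\,dt<+\infty$. Picking $t_n\to\infty$ with $\|\nabla\mu(t_n)\|\to 0$ and exploiting parabolic precompactness of the trajectory in $H^s(\Omega)$ for every $s<2$, every cluster point $\phi_\infty$ of $\{\phi(t_n)\}$ is identified as a solution of the stationary problem \eqref{stat}.

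\emph{Step 2 (\L ojasiewicz--Simon inequality and convergence).} I fix such a $\phi_\infty$ and invoke the \L ojasiewicz--Simon inequality for $E$ on the affine subspace $\{\overline{\phi}=M\}$, whose critical points are precisely the solutions of \eqref{stat}: there exist $\theta=\theta(\phi_\infty)\in(0,\tfrac12)$, $\sigma>0$ and $C>0$ such that whenever $\|\phi-\phi_\infty\|_{H^1}<\sigma$ and $\overline{\phi}=M$,
\begin{equation}
\|\mu-\overline{\mu}\|=\|-\Delta\phi+f'(\phi)-\overline{f'(\phi)}\|\ge C\,|E(\phi)-E(\phi_\infty)|^{1-\theta}.\notag
\end{equation}
To accommodate the forcing I would work with the modified functional
\begin{equation}
\mathcal{G}(t):=\Bigl[E(\phi(t))-E(\phi_\infty)+\kappa\!\!\int_t^{+\infty}\!\!\|S(s)\|^2\,ds\Bigr]^{\theta},\notag
\end{equation}
for a suitable $\kappa>0$. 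Combining the energy identity of Step 1 with the \L ojasiewicz--Simon inequality yields, on each time window during which $\phi(t)$ stays in the \L ojasiewicz neighborhood of $\phi_\infty$, the differential inequality $-\frac{d}{dt}\mathcal{G}(t)\ge c(\|\nabla\mu(t)\|+\|\ub(t)\|)$. A classical open--closed/bootstrap argument (in the spirit of \cite{HT01,WW2012,GMR12}) based on the precompactness from Step 1 then traps $\phi(t)$ in that neighborhood for all sufficiently large $t$ and produces $\int_\tau^{+\infty}(\|\nabla\mu\|+\|\ub\|)\,ds<+\infty$. Since $\phi_t=\Delta\mu-\ddiv(\ub\phi)+S$, the integrability makes $\phi(t)$ Cauchy in $H^{-1}(\Omega)$; combined with precompactness it upgrades to strong convergence in $H^s(\Omega)$ for all $s<2$ and weak convergence in $H^2(\Omega)$, identifying the limit as $\phi_\infty$.

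\emph{Step 3 (rate of convergence).} Integrating the differential inequality for $\mathcal{G}$ together with the tail bound $\int_t^{+\infty}\|S\|^2\,ds\lesssim(1+t)^{-(1+\rho)}$ from \eqref{assS}, an elementary ODE comparison gives $|E(\phi(t))-E(\phi_\infty)|\le C(1+t)^{-\min\{1/(1-2\theta),\rho\}}$; integrating the lower bound $\|\nabla\mu\|+\|\ub\|\gtrsim -\dot{\mathcal{G}}$ then yields the $H^{-1}$ rate $(1+t)^{-\min\{\theta/(1-2\theta),\,\rho/2\}}$. Interpolation with the uniform $H^2$ bound on $\phi$ finally produces \eqref{rate} for every $s\in[-1,2)$. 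The principal obstacle is Step 2: the source term destroys the monotone decrease of $E$, so the classical \L ojasiewicz argument is not directly applicable. The tail integral $\kappa\int_t^{+\infty}\|S\|^2\,ds$ has to be incorporated into $\mathcal{G}$ and carefully calibrated so that its derivative, weighted by the \L ojasiewicz exponent $\theta$, remains dominated by the dissipation $\|\nabla\mu\|+\|\ub\|$; the delicate balance between the two competing decays (\L ojasiewicz and external forcing) is precisely what fixes the sharp minimum $\min\{\theta/(1-2\theta),\rho/2\}$ in \eqref{rate}.
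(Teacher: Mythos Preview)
Your outline follows the non-autonomous \L ojasiewicz--Simon scheme of Chill--Jendoubi and Huang--Tak\'a\v{c} and is essentially correct in spirit, but the paper's execution differs in two places worth comparing.

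First, the paper avoids your open--closed trapping argument. Since the trajectory is precompact in $H^1$, its $\omega$-limit set $\omega(\phi_\tau)$ is compact and consists of steady states (Proposition~\ref{omega}); the paper covers $\omega(\phi_\tau)$ by finitely many \L ojasiewicz balls, takes the minimal exponent $\theta$, and observes that $\phi(t)$ eventually stays in this finite union simply because the trajectory approaches its $\omega$-limit set. No bootstrap is needed, and one never has to fix a single $\phi_\infty$ in advance.

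Second, and more substantially, the paper does not use your functional $\mathcal{G}=[E(\phi)-E(\phi_\infty)+\kappa\int_t^\infty\|S\|^2]^\theta$. Your key inequality $-\dot{\mathcal{G}}\ge c(\|\nabla\mu\|+\|\ub\|)$ does not follow as stated: \L ojasiewicz gives $|E(\phi)-E_\infty|^{1-\theta}\le C\|\nabla\mu\|$, but the bracket also contains the tail $\kappa\int_t^\infty\|S\|^2\sim(1+t)^{-(1+\rho)}$, and on times where $\|\nabla\mu\|\ll(1+t)^{-(1+\rho)(1-\theta)}$ the estimate breaks down. The paper's remedy is to replace $\theta$ by the smaller $\zeta=\min\{\theta,\rho/(2(1+\rho))\}$ and work with $Z(t)=\mathcal{Y}(t)+(1+t)^{-(1+\rho)(1-\zeta)}$, where $\mathcal{Y}^2$ is a combination of $\|\phi_t\|_{(H^1)'}^2$, $\|\nabla\mu\|^2$ and $\|\ub\|^2$. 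It shows the \emph{integral} inequality $\int_t^\infty Z^2\,ds\le CZ(t)^{1/(1-\zeta)}$ and then invokes the Feireisl--Simondon lemma (Lemma~\ref{f}) to conclude $Z\in L^1(t_0,\infty)$, hence $\phi_t\in L^1(t_0,\infty;(H^1)')$. Your $\mathcal{G}$-functional approach can be repaired by the same exponent reduction (or by adding an explicit power of $t$ to $\mathcal{G}$), but you gloss this over as ``careful calibration''; it is exactly the point that produces the minimum in the final rate.

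For the rate itself the paper again works with $\mathcal{K}(t)=E(\phi(t))-E_\infty+z(t)$ (not its $\theta$-power), obtains $\mathcal{K}^{2(1-\theta)}\le -C\dot{\mathcal{K}}+C(1+t)^{-2(1-\theta)(1+\rho)}$, and applies an ODE lemma of Ben~Hassen to get $\mathcal{K}(t)\le C(1+t)^{-\min\{1/(1-2\theta),\,1+\rho\}}$ (note: $1+\rho$, not $\rho$ as you wrote). A dyadic summation then yields $\int_t^\infty\mathcal{Y}\,ds\le C(1+t)^{-\min\{\theta/(1-2\theta),\rho/2\}}$, and interpolation against the uniform $H^2$ bound gives \eqref{rate}.
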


\section{Well-posedness}

In this section, we prove Theorem \ref{Th1}, namely, the existence of global weak solutions and (unique) local strong solutions to the system \eqref{1}--\eqref{ini} in both 2D and 3D. For the sake of simplicity, we shall present the proofs in the 3D case, which are still valid for the 2D case with minor modifications due to different Sobolev embedding theorems and interpolation inequalities.

\subsection{Pressure estimate}

The following lemma on the estimate for the pressure $p$ will be useful in the subsequent analysis:

\begin{lemma} \label{espp}
Suppose $d=2,3$. For any given function $\phi \in H^3(\Omega)\cap H^2_N(\Omega)$,
the pressure function $p$ satisfies the following Poisson equation subject to a homogeneous Neumann boundary condition:
\beq
 \begin{cases}-\Delta p=S-\ddiv{(\mu\nabla\phi)}, \quad\text{in }
\Omega,\\
\partial_{\nu}p=0, \quad\text{on }\partial\Omega,\\
\int_\Omega p dx=0.
\end{cases}
\label{p}
 \eeq
Moreover, then the following estimates hold:
\beq
\|\nabla p\|\leq C\|S\|+C\|\mu\|_{L^6}\|\nabla \phi\|_{L^3},
\label{p01}
\eeq
\beq\label{p00} \|p\|\leq
C\|S\|+C\|\nabla\mu\|\|\nabla\phi\|_{L^{\frac{3}{2}}}+\left|\overline{\mu(\phi)}\right|\|\phi-\overline{\phi}\|,\eeq
where $\mu$ is given by $\mu=-\Delta\phi+\phi^3-\phi$.
\end{lemma}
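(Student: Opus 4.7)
The plan has three ingredients: derive the Neumann problem \eqref{p} for $p$, obtain the gradient estimate \eqref{p01} by a direct energy argument, and then establish the finer $L^2$ estimate \eqref{p00} via duality combined with a mean/oscillation split of $\mu$.

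First, I would apply $\ddiv$ to the Darcy law \eqref{2} and invoke the constraint \eqref{3} to arrive at $-\Delta p=S-\ddiv(\mu\nabla\phi)$. The boundary condition $\partial_\nu p=0$ follows from $\ub\cdot\nu=0$ together with $\partial_\nu\phi=0$, while the solvability compatibility condition holds because $\int_\Omega S\,dx=0$ by \eqref{nec} and $\int_\Omega\ddiv(\mu\nabla\phi)\,dx=\int_{\partial\Omega}\mu\,\partial_\nu\phi\,d\sigma=0$. Normalizing by $\int_\Omega p\,dx=0$, classical elliptic theory yields a unique $p\in H^1(\Omega)$ under the assumed regularity $\phi\in H^3\cap H^2_N$. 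For \eqref{p01}, I would test the equation against $p$ and integrate by parts using $\partial_\nu p=\partial_\nu\phi=0$ to obtain $\|\nabla p\|^2=(S,p)+(\mu\nabla\phi,\nabla p)$. The Poincar\'e inequality (valid since $\overline{p}=0$) controls the first term by $C\|S\|\|\nabla p\|$, while H\"older with exponents $\tfrac12+\tfrac16+\tfrac13=1$ controls the second by $\|\mu\|_{L^6}\|\nabla\phi\|_{L^3}\|\nabla p\|$; absorbing one factor of $\|\nabla p\|$ on the left then yields \eqref{p01}.

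The main work lies in \eqref{p00}, which I would prove by duality. For any test function $\psi\in\dot L^2(\Omega)$, let $\eta\in H^2_N(\Omega)\cap\dot L^2(\Omega)$ solve $-\Delta\eta=\psi$, so that $\|\eta\|_{H^2}\leq C\|\psi\|$ by elliptic regularity. Since $\overline{p}=0$, one has $\|p\|=\sup\{(p,\psi):\psi\in\dot L^2(\Omega),\;\|\psi\|\leq 1\}$, and integrating by parts followed by substituting the PDE gives $(p,\psi)=(-\Delta p,\eta)=(S,\eta)+(\mu\nabla\phi,\nabla\eta)$, where the source term is immediately bounded by $C\|S\|$. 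For the flux term I would split $\mu=\overline{\mu}+(\mu-\overline{\mu})$: using $(\nabla\phi,\nabla\eta)=(\phi,-\Delta\eta)=(\phi,\psi)=(\phi-\overline{\phi},\psi)$, the mean contribution becomes $\overline{\mu}(\phi-\overline{\phi},\psi)$, bounded by $|\overline{\mu}|\,\|\phi-\overline{\phi}\|$; the oscillating contribution is handled by H\"older with exponents $(6,3/2,6)$, the Poincar\'e--Sobolev bound $\|\mu-\overline{\mu}\|_{L^6}\leq C\|\nabla\mu\|$, and $\|\nabla\eta\|_{L^6}\leq C\|\eta\|_{H^2}\leq C\|\psi\|$ (valid for $d\leq 3$), yielding $C\|\nabla\mu\|\|\nabla\phi\|_{L^{3/2}}$. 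Taking the supremum over $\psi$ then delivers \eqref{p00}.

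The principal obstacle is producing \eqref{p00} rather than the cruder bound $\|p\|\leq C\|\nabla p\|$ that Poincar\'e alone would supply. The improvement arises from the mean--oscillation decomposition of $\mu$, which trades the factor $\|\mu\|_{L^6}$ appearing in \eqref{p01} for $\|\nabla\mu\|$ together with a mean-value term; this refinement is essential in later sections because the basic energy estimate for $\phi$ directly controls $\|\nabla\mu\|$ rather than $\|\mu\|_{L^6}$.
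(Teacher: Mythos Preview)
Your proposal is correct and follows essentially the same route as the paper. The paper writes $p=A^{-1}S-A^{-1}\ddiv((\mu-\overline{\mu})\nabla\phi)+\overline{\mu}(\phi-\overline{\phi})$ and then bounds $\|A^{-1}\ddiv((\mu-\overline{\mu})\nabla\phi)\|$ via $L^{6/5}(\Omega)\hookrightarrow (H^1(\Omega))'$ and H\"older with exponents $(6,3/2)$; your explicit duality with the auxiliary Neumann problem $-\Delta\eta=\psi$ is simply the unpacked version of these $A^{-1}$ operator bounds, and your H\"older triple $(6,3/2,6)$ together with $\|\nabla\eta\|_{L^6}\leq C\|\eta\|_{H^2}$ is dual to the paper's $L^{6/5}\hookrightarrow (H^1)'$ step.
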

\begin{proof}
It follows from the assumption on $\phi$ and the Sobolev embdedding theorem ($d=3$) that $\mu=-\Delta\phi+\phi^3-\phi\in H^1(\Omega)$.
Multiplying \eqref{p} by $p$ and integrating by parts, we get
$$\|\nabla p\|^2= \int_\Omega \left(S p +(\mu\nabla\phi)\cdot \nabla p\right) dx.$$
The above formula together with the Poincar\'e inequalty and the H\"older inequality easily yields \eqref{p01}.

Next, we deduce from \eqref{p} that
 \bea
p&=&A^{-1}S-A^{-1}\ddiv(\mu(\phi)\nabla\phi)\non\\
&=&A^{-1}S-A^{-1}\ddiv\left((\mu(\phi)-\overline{\mu(\phi)})\nabla\phi\right)-A^{-1}\ddiv\left(\overline{\mu(\phi)}\nabla\phi\right)\non\\
&=&A^{-1}S-A^{-1}\ddiv\left((\mu(\phi)-\overline{\mu(\phi)})\nabla\phi\right)-\overline{\mu(\phi)}A^{-1}\ddiv\left(\nabla(\phi-\overline{\phi})\right)\non\\
&=&A^{-1}S-A^{-1}\ddiv\left((\mu(\phi)-\overline{\mu(\phi)})\nabla\phi\right)+\overline{\mu(\phi)}(\phi-\overline{\phi}).
\label{p2}
 \eea
  Applying the Sobolev embeddings $L^{\frac{6}{5}}(\Omega)\hookrightarrow (H^{1}(\Omega))'$, $H^1 \hookrightarrow L^6$ ($d=3$) and H\"{o}lder's inequality, we obtain that
\bea
\|p\|&\leq&\|A^{-1}S\|+\|A^{-1}\ddiv\left((\mu(\phi)-\overline{\mu(\phi)})\nabla\phi\right)\|
+\left|\overline{\mu(\phi)}\right|\|\phi-\overline{\phi}\|\non\\
&\leq&C(\|S\|+\|(\mu-\overline{\mu})\nabla\phi\|_{(H^{1})'})+\left|\overline{\mu(\phi)}\right|\|\phi-\overline{\phi}\|\non\\
&\leq&C(\|S\|+\|(\mu-\overline{\mu})\nabla\phi\|_{L^{\frac{6}{5}}})+\left|\overline{\mu(\phi)}\right|\|\phi-\overline{\phi}\|\non\\
&\leq&C\|S\|+C\|\mu-\overline{\mu}\|_{L^6}\|\nabla\phi\|_{L^\frac32}+\left|\overline{\mu(\phi)}\right|\|\phi-\overline{\phi}\|\non\\
&\leq&C\|S\|+C\|\mu-\overline{\mu}\|_{H^1}\|\nabla\phi\|_{L^\frac32}+\left|\overline{\mu(\phi)}\right|\|\phi-\overline{\phi}\|,\non
\eea
which together with the Poincar\'{e} inequality yields our conclusion \eqref{p00}.
 \end{proof}

\subsection{Global weak solutions}
 The existence of global weak solutions can be obtained by a suitable Galerkin procedure. We consider the eigenvalue problem $-\Delta w= \lambda w$ subject to the homogeneous Neumann boundary condition $\partial_{\nu}w=0$. It is well known that there exist two sequences $\{\lambda_n\}_{n=1,2,...}$ and $\{w_n\}_{n=1,2,...}$ such that, for every
$n\geq  1$, $\lambda_n\geq 0$ is an eigenvalue and $w_n\neq 0$ is a corresponding eigenfunction, the sequence ${\lambda_n}$ is
nondecreasing, tending to infinity as $n\to +\infty$, and the sequence $\{w_n\}$ is orthonormal and complete in $L^2(\Omega)$. We notice that $\lambda = 0$ is an eigenvalue, whence $\lambda_1 = 0$, and that any non-zero constant is an eigenfunction (i.e., $w_1=1$). For every $i > 1$, $w_i$ cannot be a constant and $\int_\Omega w_i dx =0$, whence $\lambda_i=\int_\Omega |\nabla w_i|^2dx>0$. Moreover, as $w_1=1$ is a constant
and $\{w_n\}$ is orthonormal in $L^2(\Omega)$, we easily deduce that
$ A^{-1} w_i=\lambda_i^{-1} w_i$ for every $i>1$.

For any $n\geq 1$, we introduce the finite-dimensional space $W_n={\rm span}\{w_1,...,w_n\}$ and $\Pi_n$ the orthogonal projection on $W_n$. Then we consider the Galerkin approximate problem $(P_n)$:

 Set
\beq\nonumber\phi_n(t,x)=\sum_{i=1}^{n}g_{ni}(t)w_i(x)\eeq which satisfies the
following approximation equation:
\beq\begin{cases}\label{app1}
\partial_t\phi_n=\Delta\mu_n+\Pi_n(S-\ddiv(\ub_n\phi_n)),\\
\mu_n=-\Delta \phi_n+\Pi_n f(\phi_n),\\
\phi_n(\tau)=\Pi_n \phi_\tau,
\end{cases}
\eeq
where $f(\phi_n)=\phi_n^3-\phi_n$ and
\beq
\label{app11}
\ub_n=-\nabla
p_n+\mu_n\nabla\phi_n.
\eeq
Here, $p_n$ satisfies a Poisson equation with homogenous Neumann boundary condition:
\beq
 \begin{cases}-\Delta p_n=S-\ddiv{(\mu_n\nabla\phi_n)}, \quad\text{in }
\Omega,\\
\partial_{\nu}p_n=0, \quad\text{on }
\partial\Omega,\end{cases}
\label{pn}
 \eeq
Then $p_n$ is uniquely determinate up to an arbitrary additive function that may only depend on $t$. For the sake of simplicity and without affecting the mathematical analysis, we require that $\int_{\Omega}p_n dx=0$ and thus
$$ p_n=A^{-1}S-A^{-1}\ddiv(\mu_n\nabla\phi_n).$$

 Taking the inner product of \eqref{app1} in $L^2(\Omega)$  with $w_j$, we
infer that $g_{nj}(t)$ satisfies the following ODE system
\beq \label{app2}
\begin{cases}
g_{nj}'+(\lambda_j^2-\lambda_j)g_{nj}+G_j(g)=S_j(t),\;\;j=1,\cdots,n,\\
g_{nj}(\tau)=\xi_j:=(\phi_\tau,w_j)\end{cases}\eeq where \beq\nonumber
G_j(g)=\lambda_j\left((\sum_{i=1}^{n}g_{ni}w_i)^3,w_j\right)+\left(\ddiv(\ub_n\sum_{i=1}^{n}g_{ni}w_i),w_j\right),
 \eeq
and
\beq\nonumber
 S_j(t)=(S,w_j)\in L^{2}(\tau,T).
 \eeq It is easy to verify that the nonlinearity $G_j$ is locally lipschitz in $g=(g_{n1}, \cdots, g_{nn})$ and as a consequence there exists $T_n\in(\tau, T)$ depending on $|\xi_j|$ such that \eqref{app2} has a unique local solution
$g_{nj}(t)\in C[\tau, T_n].$

In what follows, we derive some a priori estimates on the approximate solutions that are valid in both 2D and 3D.

First, integrating \eqref{app1} over $\Omega\times[\tau, T]$, it is easy to find that
\beq\label{m0}
\int_{\Omega}\phi_n(t)dx=\int_{\Omega}\phi_n(\tau)dx=\int_\Omega \phi_\tau dx, \quad \forall t\in[\tau, T].
\eeq
Multiplying the equation \eqref{app1} by $\mu_n$ and integrating by parts, we get
\bea
&& \frac{d}{dt}\int_{\Omega}\left(\frac{1}{2}|\nabla\phi_n|^2+f(\phi_n)\right)dx+\|\nabla
\mu_n\|^2\non\\
&=&\int_{\Omega}S\mu_n(1-\phi_n)
dx-\int_{\Omega}(\ub_n\cdot\nabla\phi_n)\mu_n dx.\label{app0}
\eea
Taking $L^2$-inner product of \eqref{app11} with $\ub_n$, using integration by parts, we obtain that
\beq
 \|\ub_n\|^2=\int_{\Omega}(-\nabla
p_n+\mu_n\nabla\phi_n)\cdot \ub_ndx=\int_{\Omega}p_n S+(\mu_n\nabla\phi_n)\cdot \ub_n dx.\non
  \eeq
  Summing it with \eqref{app0}, using \eqref{p2} for $p_n$, H\"{o}lder's inequality and Poincar\'{e}'s inequality, we deduce that
\begin{eqnarray}
&&\frac{d}{dt}\int_{\Omega}\left(\frac{1}{2}|\nabla\phi_n|^2+f(\phi_n)\right)dx+\|\nabla
\mu_n\|^2+\|\ub_n\|^2\nonumber\\
&=&\int_{\Omega}S\mu_n(1-\phi_n)dx+\int_{\Omega}p_n S dx\nonumber\\
&=&\int_{\Omega}S(\mu_n-\overline{\mu_n})(1-\phi_n)
dx-\overline{\mu_n}\int_{\Omega} S\phi_n dx\nonumber\\
&&+\int_{\Omega}S\left(A^{-1}S-A^{-1}\ddiv\left((\mu_n-\overline{\mu_n})\nabla\phi_n\right)+\overline{\mu_n}(\phi_n
-\overline{\phi_n})\right) dx\nonumber\\
&=&\int_{\Omega}S(\mu_n-\overline{\mu_n})(1-\phi_n)dx+\int_{\Omega}S\left(A^{-1}S-A^{-1}\ddiv\left((\mu_n-\overline{\mu_n})\nabla\phi_n\right)\right)dx\non\\
&\leq&\|S\|\|\mu_n-\overline{\mu_n}\|+\|S\|_{L^{\frac{3}{2}}}\|\mu_n-\overline{\mu_n}\|_{L^6}\|\phi_n\|_{L^6}\nonumber\\
&&+\|S\|(\|A^{-1}S\|+\|A^{-1}\ddiv\left((\mu_n-\overline{\mu_n})\nabla\phi_n\right)\|)\nonumber\\
&\leq& C\|S\|\|\nabla\mu_n\|(1+\|\phi_n\|_{H^1})+C\|S\|\left(\|S\|+\|\nabla\mu_n\|\|\nabla\phi_n\|_{L^\frac32}\right).
\label{est1a0}
\end{eqnarray}
Thanks to Young's inequality and Poincar\'{e}'s inequality, it holds
\beq\label{poi0}\|\phi_n\|^2_{H^1}=\|\nabla\phi_n\|^2+\|\phi_n\|^2\leq
C\left(\frac{1}{2}\|\nabla\phi_n\|^2+\int_{\Omega}f(\phi_n)dx+1\right).
\eeq
Denote
\beq\nonumber E_0(\phi_n)=\frac{1}{2}\|\nabla\phi_n\|^2+\int_{\Omega}f(\phi_n)dx+1,\eeq we infer from  \eqref{est1a0}, \eqref{poi0} and Young's inequality that
\beq
\label{est1a}
\frac{d}{dt}E_0(\phi_n)+\|\nabla \mu_n\|^2+\|\ub_n\|^2
\leq
\frac{1}{2}\|\nabla\mu_n\|^2+C\|S\|^2E_0(\phi_n).
\eeq

Applying the Gronwall inequality, we obtain that
\beq\label{est1}
\int_{\Omega}\left(\frac{1}{2}|\nabla\phi_n|^2+f(\phi_n)\right)(t)dx+\int_{\tau}^{T}\|\nabla
\mu_n\|^2dt+\int_{\tau}^{T}\|\ub_n\|^2dt
\leq C
\eeq where $C$ depends on  $\|\phi_\tau\|_{H^1}$, $\Omega$ and $\|S\|_{L^{2}(\tau,T;L^{2})}$ but not $T_n$ and $n$. This entails
that
\beq\|\phi_n(t)\|_{H^1}^2=\|(-\Delta+I)^{\frac{1}{2}}\phi_n\|^2=\sum_{i=1}^{n}(1+\lambda_i)g_{ni}^2(t)\leq C\qquad\text{for}\;\tau\leq t\leq T.
\label{est1ah}
\eeq
Hence the local
solution $\phi_n$ can be extended to $[\tau,T]$ for any fixed $T>\tau$.

The estimate \eqref{est1} indicates that  $\ub_n$ is uniformly bounded in $L^2(\tau,T;L^2(\Omega))$. Since
 \beq \left|\int_\Omega \mu_n dx\right|=\left|\int_\Omega f(\phi_n) dx\right|\leq C(\|\phi_n\|_{L^1}+\|\phi_n\|^3_{L^3})\leq C,\eeq
 it follows from \eqref{est1} and the Poincar\'e inequality that $\mu_n$ is uniformly bounded in $L^2(\tau,T; H^1(\Omega))$. Furthermore, by the Gagliardo-Nirenburg inequality ($d=3$), we have
\bea
\|\nabla\Delta\phi_n\|^2&\leq& C\left(\|\nabla\mu_n\|^2+\int_{\Omega}\phi_n^4|\nabla\phi_n|^2dx+\|\nabla\phi_n\|^2\right)\non\\
&\leq&C(1+\|\nabla\mu_n\|^2+\|\phi_n\|_{L^{\infty}}^4)\non\\
&\leq& C(1+\|\nabla\mu_n\|^2+\|\phi_n\|_{L^6}^3\|\nabla\Delta\phi_n\|+\|\phi_n\|_{L^6}^4)\non\\
&\leq&\frac{1}{2}\|\nabla\Delta\phi_n\|^2+C(1+\|\nabla\mu_n\|^2),\non
\eea
which yields that
\beq\nonumber
\int_{\tau}^{T}\|\nabla\Delta\phi_n\|^2dt\leq C.
\eeq
As a consequence, we obtain that
$\phi_n$ is uniformly bounded in $L^{\infty}(\tau,T;H^1(\Omega))$ and also in $L^2(\tau,T;H^3(\Omega))$. By the following interpolation inequality ($d=3$)
\beq\nonumber
 \|\phi_n\|_{L^\infty}\leq C \|\phi_n\|_{L^6}^{\frac{3}{4}}\|\nabla\Delta\phi_n\|^{\frac{1}{4}}+C\|\phi_n\|_{L^6},
\eeq
 it holds that for any $\varphi\in L^{\frac{8}{3}}(\tau,T;H^1(\Omega))$,
\bea
\left|\int_{\tau}^T\int_{\Omega}\ddiv (\ub_n\phi_n)\varphi dxdt\right|
&\leq&\int_{\tau}^T\|\ub_n\|\|\phi_n\|_{L^\infty}\|\nabla \varphi\|dt\non\\
&\leq&\left(\int_{\tau}^T\|\ub_n\|^2dt\right)^{\frac12}\left(\int_{\tau}^T\|\phi_n\|_{L^\infty}^8dt\right)^{\frac18}
\left(\int_{\tau}^T\|\varphi\|_{H^1}^{\frac83}dt\right)^{\frac38}\non\\
&\leq&C.\non
\eea
Therefore, we have
\beq\nonumber
\ddiv(\ub_n\phi_n)\in L^{\frac{8}{5}}(\tau,T;(H^1(\Omega))'),
\eeq
which further implies that
\beq\nonumber\partial_t\phi_n\in L^{\frac{8}{5}}(\tau,T;(H^1(\Omega))')\eeq  is
uniformly bounded.

By the interpolation inequality ($d=3$)
\beq\label{L3}
 \|\nabla \phi_n\|_{L^3}\leq C \|\nabla \phi_n\|^{\frac{3}{4}}\|\nabla\Delta\phi_n\|^{\frac{1}{4}}+C\|\nabla \phi_n\|,
\eeq we have for any $\mathbf{v}\in L^\frac83(\tau, T; \mathbf{L}^2(\Omega))$, it holds
\bea
\left|\int_\tau^T (\mu_n\nabla \phi_n)\cdot\mathbf{v}dt\right|&\leq& \int_\tau^T\|\mu_n\|_{L^6}\|\nabla \phi_n\|_{L^3}\|\mathbf{v}\|dt\non\\
&\leq& C \left(\int_\tau^T \|\mu_n\|_{H^1}^2 dt\right)^\frac12\left(\int_\tau^T\|\nabla\phi_n\|_{L^3}^8dt\right)^\frac18\left(\int_\tau^T\|\mathbf{v}\|^\frac83dt\right)^\frac38\non\\
&\leq& C.
\eea
As a consequence, $\mu_n\nabla\phi_n\in L^{\frac{8}{5}}(\tau,T;\mathbf{L}^2(\Omega))$ and hence we have $\nabla p_n\in  L^{\frac{8}{5}}(\tau,T;\mathbf{L}^2(\Omega))$.

The above uniform estimates are enough to pass to the limit $n\to+\infty$ in the Galerkin scheme by standard compactness theorems to obtain the existence of global weak solutions to the system \eqref{1}--\eqref{ini}. The details are omitted here. One may  refer to \cite{WW2012, BCG} for detailed argument for the simpler case $S=0$.

\subsection{Local strong solutions}

Now we proceed to prove the existence of local strong solutions. For this propose, we derive some higher order a priori estimates for the approximation solutions.

Testing \eqref{app1} by $\Delta^2\phi_n$ and
using integration by parts, we obtain that
\bea
&& \frac{1}{2}\frac{d}{dt}\|\Delta\phi_n\|^2+\|\Delta^2\phi_n\|^2\non\\
&=&
\int_{\Omega}\Delta(\phi_n^3-\phi_n)\Delta^2\phi_ndx+\int_{\Omega}S(1-\phi_n)\Delta^2\phi_ndx
-\int_{\Omega}\ub_n\cdot\nabla\phi_n\Delta^2\phi_ndx\non\\
&\leq&
\frac{1}{4}\|\Delta^2\phi_n\|^2+3\int_{\Omega}\left(|\Delta(\phi_n^3-\phi_n)|^2+S^2(1-\phi_n)^2+|\ub_n|^2|\nabla\phi_n|^2\right)dx,
\label{app3}
\eea
By the three dimensional Agmon's inequality $\|\phi_n\|_{L^{\infty}}\leq C\|\phi_n\|_{H^1}^{\frac12}\|\phi_n\|_{H^2}^{\frac12}$
and the estimate \eqref{est1ah}, we can deduce that
\bea
&&\int_{\Omega}\left|\Delta(\phi_n^3-\phi_n)\right|^2dx\non\\
&\leq&C\int_{\Omega}\left(\phi_n^2|\nabla\phi_n|^4+\phi_n^4|\Delta\phi_n|^2+|\Delta\phi_n|^2\right)dx\non\\
&\leq&C\left(\|\phi_n\|_{L^6}^2\|\nabla\phi_n\|_{L^{6}}^4+\|\phi_n\|_{L^{\infty}}^4\|\Delta\phi_n\|^2+\|\Delta\phi_n\|^2\right)\non\\
&\leq& C(\|\Delta\phi_n\|^2+\|\Delta\phi_n\|^4+1),
\label{i1b}
\eea
and
\beq
\int_\Omega S^2(1-\phi_n)^2 dx\leq (1+\|\phi_n\|_{L^\infty})^2\|S\|^2 \leq C(1+\|\Delta\phi_n\|)\|S\|^2.\label{ubn2}
\eeq
For the third term on the right-hand side of \eqref{app3}, we have
 \bea \int_{\Omega}|\ub_n|^2|\nabla\phi_n|^2dx
&\leq&C\int_{\Omega}\left(|\nabla p_n|^2|\nabla\phi_n|^2+|\mu_n|^2|\nabla\phi_n|^4\right)dx\non\\
&\leq& C\|\nabla\phi_n\|_{L^\infty}^2\|\nabla p_n\|^2+C\|\nabla\phi_n\|_{L^\infty}^4\|\mu_n\|^2.\label{ubn1}
\eea
Using  the estimate \eqref{est1ah}, \eqref{L3} together with Agmon's inequality for $\nabla \phi_n$
\beq \|\nabla \phi_n\|_{L^\infty}\leq C\|\phi_n\|_{H^2}^\frac12\|\phi_n\|_{H^3}^\frac12\non\eeq
and the fact $$\|\nabla p_n\|^2= \int_\Omega \left(S p_n +(\mu_n\nabla\phi_n)\cdot \nabla p_n\right) dx$$
 we have
\bea
\|\nabla\phi_n\|_{L^\infty}^2\|\nabla p_n\|^2
&\leq& C\|\nabla\phi_n\|_{L^\infty}^2(\|S\|_{\dot{H}^{-1}}^2+\|\mu_n\nabla\phi_n\|^2)\non\\
&\leq& C\|\nabla\phi_n\|_{L^\infty}^2\|S\|_{\dot{H}^{-1}}^2+C\|\nabla\phi_n\|_{L^\infty}^4\|\mu_n\|^2,\label{i31b}
\eea where
\bea
\|\nabla\phi_n\|_{L^\infty}^4\|\mu_n\|^2
&\leq&C(1+\|\nabla\phi_n\|_{H^1}^2\|\nabla\phi_n\|_{H^2}^2)(1+\|\Delta \phi_n\|^2)\non\\
&\leq& C(1+\|\Delta \phi_n\|^2\|\nabla\Delta\phi_n\|^2+\|\Delta\phi_n\|^2+\|\nabla\Delta\phi_n\|^2)(1+\|\Delta\phi_n\|^2)\non\\
&\leq& \frac18\| \Delta^2\phi_n\|^2+C(\|\Delta \phi_n\|^{10}+1),\label{i32b}
\eea
 and
 \bea \|\nabla\phi_n\|_{L^\infty}^2\|S\|_{\dot{H}^{-1}}^2
 &\leq&C(1+\|\Delta \phi_n\|\|\nabla\Delta\phi_n\|+\|\Delta\phi_n\|+\|\nabla\Delta\phi_n\|)\|S\|_{\dot{H}^{-1}}^2\non\\
 &\leq&\frac18\| \Delta^2\phi_n\|^2+C(\|\Delta \phi_n\|^{2}+1)\eea
As a consequence, we obtain from \eqref{app3}--\eqref{i32b} that
\beq
\label{app4}\frac{d}{dt}\|\Delta\phi_n\|^2+\|\Delta^2 \phi_n\|^2\leq C\left(\|\Delta\phi_n\|^{10}+1\right).
\eeq
Letting $y_n(t)=\|\Delta\phi_n\|^2+1$, we have
\beq y_n'(t)\leq C_0y_n^{5}(t)\eeq
with the constant $C_0$ is independent of $t$. Solving this inequality implies that
\beq
\nonumber y_n(t)\leq \frac{y_n(\tau)}{(1-4C_0y_n^{4}(\tau)t)^{\frac{1}{4}}}, \qquad \forall\; \tau\leq t\leq\min\left\{\frac{1}{4C_0y_n^{4}(\tau)}, T\right\}:=T_n.
\eeq
Noticing that
\beq\nonumber y_n(\tau)\leq y(\tau)=\|\Delta\phi_\tau\|^2+1,\eeq we get
\beq\nonumber y_n(t)\leq 2^{-\frac{1}{4}}(\|\Delta\phi_\tau\|^2+1),\qquad\text{ whenever   }\tau\leq t\leq\min\left\{\frac{1}{8C_0(\|\Delta\phi_\tau\|^2+1)^{4}}, T\right\}:=T^*.\eeq
As a result, for any $t\in[\tau,T^*]$, the following estimate holds
\beq\label{app5}\|\phi_n(t)\|_{H^2}^2+\int_{\tau}^{T^*}\|\phi_n(t)\|_{H^4}^2dt\leq C.\eeq
The above estimate together with \eqref{ubn2}--\eqref{i32b} yields
\beq \int_\tau^{T^*} \|\ddiv(\ub_n\phi_n)\|^2dt\leq C.\non\eeq
Besides,
\beq
\int_\tau^{T^*}\|\mu_n\|_{H^2}^2dt\leq C\int_\tau^{T^*}(\|\Delta^2\phi_n\|^2+\|\phi_n\|_{H^2}^2+\|\phi_n\|_{H^2}^6)dt\leq C.\label{muh2}
\eeq
As a consequence, we also have
\beq \int_\tau^{T^*}\|\partial_t\phi_n\|^2 dt\leq C\eeq
and
\bea
\int_\tau^{T^*} \|p\|_{H^2}^2dt
&\leq& C\int_\tau^{T^*} \left(\|S\|^2+\|\ddiv(\mu_n\nabla\phi_n)\|^2\right)dt\non\\
&\leq& C+\int_\tau^{T^*} (\|\nabla \mu_n\|_{L^3}^2\|\nabla\phi_n\|_{L^6}^2+\|\mu_n\|_{L^\infty}^2\|\phi_n\|_{H^2}^2)dt\non\\
&\leq& C.\label{ph2}
\eea
Finally, from \eqref{muh2} and \eqref{ph2} we can easily derive that
\beq
\int_\tau^{T^*} \|\ub_n\|_{H^1}^2dt\leq C.
\eeq

Combining the above estimates together, we are able to prove the existence of local strong solution to the system \eqref{1}--\eqref{ini} by the same argument as in \cite{LTZ}. Moreover, arguing exactly as in \cite[Section 6]{LTZ}, we can obtain the uniqueness of strong solutions. This completes the proof of Theorem \ref{Th1}.

\section{Global Strong Solution in 2D}
In this section, we focus on the study of the CHD system \eqref{1}--\eqref{ini} in the 2D case and prove  Theorem \ref{Th2}. Differently from the 3D case, the strong solution exists globally under weak assumption on the external source term $S$. Moreover, it defines a family of closed processes $\{U(t, \tau)\}_{t\geq \tau}$ in the space $H_N^2(\Omega)$.

\subsection{Existence}
We show that under a slightly weak assumption on $S$ than in Theorem \ref{Th1}(ii), one can actually prove the existence of global strong solution to the system \eqref{1}--\eqref{ini}. Based on the Galerkin scheme described before, we only need to obtain proper global-in-time \textit{a priori} estimates. For the sake of simplicity, below we shall just perform formal estimates for smooth solutions (i.e., drop the subscript '$n$'), which can be rigorously justified by the Galerkin approximation in previous section.

\begin{lemma}\label{regww}
Suppose that $d=2$ and $S\in L^2(\tau, T; \dot L^2(\Omega))$. Let $(\phi, \ub, p)$ be a smooth solution to problem \eqref{1}--\eqref{ini}. Then the following estimates hold
\beq \label{regw}
\|\Delta\phi(t)\|^2\leq C_1\left(1+\frac{1}{t-\tau}\right),\quad \forall\, t\in (\tau, T],
\eeq
and
\beq\label{h2es}
\|\Delta\phi(t)\|^2 +\int_\tau^T \|\Delta^2\phi(t)\|^2dt \leq C_2,\quad \forall\,t\in[\tau, T]
\eeq
where the constant $C_1$ depends on
 $\|\phi_\tau\|_{H^1}$, $\Omega$ and $\|S\|_{L^{2}(\tau,T;L^{2})}$, while the constant $C_2$ depends on
 $\|\phi_\tau\|_{H^2}$, $\Omega$ and $\|S\|_{L^{2}(\tau,T;L^{2})}$.
\end{lemma}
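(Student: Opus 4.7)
My plan is to derive a single differential inequality for $y(t):=\|\Delta\phi(t)\|^2$ by testing the mass equation \eqref{1} against $\Delta^2\phi$ and to extract both \eqref{regw} and \eqref{h2es} from it. Using $\partial_\nu\phi=0$ together with $\partial_\nu\mu=0$ (which force $\partial_\nu\Delta\phi=0$), integration by parts gives
\begin{equation*}
\tfrac{1}{2}\tfrac{d}{dt}\|\Delta\phi\|^2+\|\Delta^2\phi\|^2=(\Delta f'(\phi),\Delta^2\phi)-(\ddiv(\ub\phi),\Delta^2\phi)+(S,\Delta^2\phi).
\end{equation*}
The right-hand side is controlled using two-dimensional Sobolev and Gagliardo--Nirenberg inequalities, chiefly Ladyzhenskaya $\|v\|_{L^4}^2\leq C\|v\|\|\nabla v\|$ and the 2D Agmon bound $\|v\|_{L^\infty}^2\leq C\|v\|\|v\|_{H^2}$, together with the a priori bounds $\phi\in L^\infty(\tau,T;H^1)\cap L^2(\tau,T;H^3)$ and $\mu\in L^2(\tau,T;H^1)$ already established in Section~3. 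With $f'(\phi)=\phi^3-\phi$ these yield $\|\Delta f'(\phi)\|^2\leq C(1+y)^m$ for some fixed $m$. For the advection contribution, I use $\ddiv\ub=S$ to split $\ddiv(\ub\phi)=\ub\cdot\nabla\phi+S\phi$, decompose $\ub=-\nabla p+\mu\nabla\phi$ via Darcy's law, and control $\nabla p$ through the pressure bound \eqref{p01} of Lemma~\ref{espp}. Cauchy--Schwarz and Young's inequality then produce
\begin{equation*}
y'(t)+\|\Delta^2\phi(t)\|^2\leq \tfrac{1}{4}\|\Delta^2\phi(t)\|^2+\bigl(1+g(t)\bigr)P(y(t)),
\end{equation*}
with $P$ polynomial and $g\in L^1(\tau,T)$ controlled by $\|S\|^2+\|\nabla\mu\|^2$ via \eqref{est1}.

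The key point in 2D is that the dissipation is strong enough to absorb the remaining polynomial nuisance terms. Combining $\|\Delta\phi\|^2\leq\|\nabla\phi\|\|\nabla\Delta\phi\|$ (integration by parts using $\partial_\nu\phi=0$) and $\|\nabla\Delta\phi\|^2\leq\|\Delta\phi\|\|\Delta^2\phi\|$ (using $\partial_\nu\Delta\phi=0$) with the uniform bound $\|\nabla\phi\|\leq C$ yields $y^3\leq C\|\Delta^2\phi\|^2$. A further Young inequality then absorbs $(1+g(t))P(y)$ into a fraction of $\|\Delta^2\phi\|^2$ plus an $L^1$-in-time remainder, leaving the clean estimate $y'(t)+\tfrac{1}{2}\|\Delta^2\phi(t)\|^2\leq C(1+g(t))$. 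For \eqref{regw} from $H^1$ initial data, the bound $\phi\in L^2(\tau,T;H^3)$ gives $\int_\tau^T y\,dt\leq C$, so for each $t\in(\tau,T]$ one can select $s\in[\tau,(\tau+t)/2]$ with $y(s)\leq 2C/(t-\tau)$; integrating the differential inequality on $[s,t]$ then produces the smoothing bound $y(t)\leq C_1(1+1/(t-\tau))$. For \eqref{h2es} with $\phi_\tau\in H^2_N$, $y(\tau)$ is finite and direct integration on $[\tau,T]$ yields both the $L^\infty$ bound on $y$ and, through the retained $\tfrac{1}{2}\|\Delta^2\phi\|^2$ term, the $L^2(\tau,T;L^2)$ estimate on $\Delta^2\phi$. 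The Galerkin approximation of Section~3 justifies all the formal manipulations a posteriori.

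The main obstacle is the advection term: since $\ub$ is a priori only in $L^2(\tau,T;\mathbf{L}^2)$ and is not divergence-free, $\ub\cdot\nabla\phi$ cannot be directly integrated by parts against $\Delta^2\phi$, and the estimate must be routed through the pressure via Lemma~\ref{espp} and through the $H^1$-regularity of $\mu$, which entails some intricate bookkeeping. The analogous 3D computation carried out in Section~3 produces a much more violent inequality $y'\leq C(1+y)^{5}$ that only yields a local-in-time bound; what makes the 2D argument close globally is precisely the combination of the sharper 2D Sobolev inequalities with the strong dissipative absorption $y^3\leq C\|\Delta^2\phi\|^2$ described above.
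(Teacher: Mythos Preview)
Your overall strategy---test \eqref{1} against $\Delta^2\phi$ and derive a differential inequality for $y=\|\Delta\phi\|^2$---is exactly what the paper does, and your selection argument for \eqref{regw} is a valid substitute for the uniform Gronwall inequality the paper invokes. The gap is in the ``dissipative absorption'' step where you claim to reach the clean inequality $y'+\tfrac12\|\Delta^2\phi\|^2\leq C(1+g(t))$ with $g\in L^1$. The interpolation $y^3\leq C_\ast\|\Delta^2\phi\|^2$ is correct, but it only lets you absorb a \emph{small fixed multiple} of $y^3$, since $C_\ast$ depends on the $H^1$ bound of $\phi$ and is not at your disposal. Concretely, the term $(S\phi,\Delta^2\phi)$ forces a contribution $C\|S\|^2\|\phi\|_{L^\infty}^2\leq C\|S\|^2(1+y^{1/2})$; applying Young to $\|S\|^2y^{1/2}$ against $y^3$ leaves a remainder $C_\epsilon\|S\|^{12/5}$, which is \emph{not} integrable under the sole hypothesis $S\in L^2(\tau,T;L^2)$. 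Similarly, terms of the form $\|\nabla\Delta\phi\|^2\,y$ reduce via $\|\nabla\Delta\phi\|^2\leq y^{1/2}\|\Delta^2\phi\|$ to $y^{3/2}\|\Delta^2\phi\|$, and absorbing this costs $(\delta+C_\ast/4\delta)\|\Delta^2\phi\|^2\geq\sqrt{C_\ast}\,\|\Delta^2\phi\|^2$, which need not be less than the available dissipation.

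The remedy is simply not to attempt this absorption. Stop at the linear-in-$y$ inequality
\[
y'(t)+\|\Delta^2\phi(t)\|^2\leq C\,h(t)\,(1+y(t)),\qquad h(t)=1+\|S(t)\|^2+\|\nabla\Delta\phi(t)\|^2,
\]
which follows from your own estimates (it is the paper's \eqref{ap4}). Since $h\in L^1(\tau,T)$ by \eqref{esh1}, the classical Gronwall inequality gives \eqref{h2es} when $\phi_\tau\in H^2_N$, and the uniform Gronwall inequality (or your selection-of-$s$ argument combined with Gronwall on $[s,t]$) gives the smoothing bound \eqref{regw} from $H^1$ data. This is precisely the paper's route; your proposed shortcut through $y^3\leq C_\ast\|\Delta^2\phi\|^2$ does not close under the stated assumption on $S$.
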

\begin{proof}
Similar to \eqref{est1}, we have the following estimate
\beq\label{esh1}
\sup_{t\in[\tau, T]}\|\phi(t)\|_{H^1}^2+\int_{\tau}^{T}\|\nabla
\mu\|^2dt+\int_{\tau}^{T}\|\ub\|^2dt
\leq C
\eeq where $C$ depends on and $\|\phi_\tau\|_{H^1}$, $\Omega$ and $\|S\|_{L^{2}(\tau,T;L^{2})}$.
Next, it is similar to \eqref{app3} that by testing \eqref{1} by $\Delta^2\phi$ and
using integration by parts, we obtain
\bea
&& \frac{1}{2}\frac{d}{dt}\|\Delta\phi\|^2+\frac34\|\Delta^2\phi\|^2\non\\
&\leq& 3\int_{\Omega}\left(|\Delta(\phi^3-\phi)|^2+S^2(1-\phi)^2+|\ub|^2|\nabla\phi|^2\right)dx,
\label{ap3}
\eea
 Using the two dimensional Agmon's inequality $\|\phi\|_{L^\infty}\leq C \|\phi\|^\frac12\|\phi\|_{H^2}^\frac12$ and the Gagliardo-Nirenberg inequality $\|\nabla \phi\|_{L^4}\leq C\|\nabla \Delta \phi\|^\frac14\|\nabla \phi\|^\frac34+C\|\nabla \phi\|$, we can estimate the first two terms on the right-hand side of \eqref{ap3} as follows:
\bea && 3\int_{\Omega}\left|\Delta(\phi^3-\phi)\right|^2dx\non\\
&\leq& C\int_{\Omega}\left(\phi^2|\nabla\phi|^4+\phi^4|\Delta\phi|^2+|\Delta\phi|^2\right)dx\non\\
&\leq& C\left(\|\phi\|_{L^{\infty}}^2\|\nabla\phi\|_{L^4}^4+\|\phi\|_{L^{\infty}}^4\|\Delta\phi\|^2+\|\Delta\phi\|^2\right)\non\\
&\leq& C(\|\phi\|^2+\|\Delta\phi\|\|\phi\|)(\|\nabla\Delta\phi\|\|\nabla \phi\|^3+\|\nabla \phi\|^4)\non\\
&& +C(\|\Delta\phi\|^2\|\phi\|^2+\|\phi\|^4)\|\Delta\phi\|^2+C\|\Delta\phi\|^2\non\\
&\leq& C\|\phi\|_{H^1}^3(\|\phi\|_{H^1}^2+\|\Delta\phi\|^2)(\|\nabla\Delta\phi\|+\|\phi\|_{H^1})+C\|\Delta\phi\|^2,\label{i1}
\eea
where we have used the interpolation
$\|\Delta \phi\|^2\leq \|\nabla \phi\|\|\nabla \Delta\phi\|$, which is a consequence of the fact that $\phi$ fulfils $\partial_\nu \phi=0$ on the boundary. Besides, it is easy to see that
\beq 3 \int_{\Omega}S^2(1-\phi)^2\leq C\|S\|^2(1+\|\phi\|_{L^{\infty}})^2\leq C\|S\|^2(\|\Delta\phi\|\|\phi\|+\|\phi\|^2).\label{i2}
\eeq
For the third term on the right-hand side of \eqref{ap3},  we deduce from \eqref{p2} that
\bea
&&3\int_{\Omega}|\ub|^2|\nabla\phi|^2dx\non\\
&\leq& C\int_\Omega |\nabla p|^2|\nabla\phi|^2dx+C\|\nabla\phi\|_{L^\infty}^4\|\mu\|^2\non\\
&\leq& C\int_\Omega |\nabla A^{-1}S|^2|\nabla \phi|^2dx+\int_\Omega |\nabla A^{-1}\ddiv(\mu\nabla\phi)|^2|\nabla \phi|^2dx+C\|\nabla\phi\|_{L^\infty}^4\|\mu\|^2\non\\
&\leq& C\|\nabla A^{-1}S\|_{L^4}^2\|\nabla \phi\|_{L^4}^2+C\|\nabla\phi\|_{L^\infty}^4\|\mu\|^2\non\\
&\leq& C\|S\|^2(\|\nabla \phi\|^2+\|\nabla \phi\|\|\Delta\phi\|)\non\\
&& +C(\|\nabla \phi\|^4+\|\nabla \phi\|^2\|\Delta \phi\|^2+\|\nabla \phi\|^2\|\nabla \Delta\phi\|^2)(\|f'(\phi)\|^2+\|\Delta \phi\|^2)\non\\
&\leq& C\|S\|^2(\|\nabla \phi\|^2+\|\Delta\phi\|^2)\non\\
&& +C\|\nabla \phi\|^2(\|\nabla \phi\|^2+\|\nabla \Delta\phi\|^2)(\|\phi\|_{H^1}^6+\|\phi\|_{H^1}^2+\|\Delta \phi\|^2)
\label{i31ba}
\eea
Here we note that the constants $C$ in \eqref{i1}--\eqref{i31ba} depend  only on $\Omega$ and coefficient of the system.

As a consequence, we deduce from  \eqref{ap3}--\eqref{i31ba} and the uniform estimate \eqref{esh1} that
\beq\begin{split}\label{ap4}
\frac{d}{dt}\|\Delta\phi\|^2+\|\Delta^2\phi\|^2\leq Ch(t)\|\Delta\phi\|^2+Ch(t),
\end{split}
\eeq
where $$h(t)=1+\|S\|^2+\|\nabla\Delta\phi\|^2$$
and the constant $C$ in \eqref{ap4} depends on  $\|\phi_\tau\|_{H^1}$, $\Omega$ and $\|S\|_{L^{2}(\tau,T;L^{2})}$.\\
Besides, it easily follows from \eqref{esh1} that
\beq
\sup_{t\in [\tau, T)} \int_{t}^{t+r} h(s) ds\leq r+C, \quad \forall\, r\in (0, \min\{1, T-t\}).
\eeq
Then by the uniform Gronwall inequality \cite[Lemma III.1.1]{TE}, we infer that
\beq \label{est2}
\|\Delta\phi(t+\delta)\|^2\leq C(1+\delta^{-1}),\quad \forall\ t\in [\tau, T),\  \delta\in (0, \min\{1, T-t\}),
\eeq
where the constant $C$ depends on
 $\|\phi_\tau\|_{H^1}$, $\Omega$ and $\|S\|_{L^{2}(\tau,T;L^{2})}$.

On the other hand, by the classical Gronwall inequality, we also infer that
\beq
\|\Delta\phi(t)\|^2 \leq (\|\Delta\phi_\tau\|^2+1)e^{C\int_\tau^T h(s)ds},
\eeq
and then
\beq
\int_\tau^T \|\Delta^2\phi(t)\|^2dt\leq C,
\eeq
where the constant $C$ depends on
 $\|\phi_\tau\|_{H^2}$, $\Omega$ and $\|S\|_{L^{2}(\tau,T;L^{2})}$.
\end{proof}

The existence of global strong solutions to problem \eqref{1}--\eqref{ini} is a direct consequence of the uniform estimates \eqref{h2es} and \eqref{esh1} (see \cite[Section 4]{LTZ} for detailed argument with $S=0$). Thus, the proof is omitted here.

\subsection{Continuous dependence on initial data}
The strong solution to problem \eqref{1}--\eqref{ini} satisfies the following continuous dependence property, which also yields the uniqueness:
\begin{lemma}\label{pconti}
Suppose that $d=2$. Let $(\phi_i,\ub_i,p_i)$ $(i=1,2)$ be the two global strong solutions corresponding to the initial data $\phi_{\tau i}\in H^2_N(\Omega)$. Then for $t\in [\tau, T]$, the following estimate holds:
\beq
\|\phi_1(t)-\phi_2(t)\|_{H^1}^2+\int_\tau ^T(\|\nabla \mu(s)\|^2+\|\ub(s)\|^2)ds \leq C_T \|\phi_{\tau 1}-\phi_{\tau 2}\|^2_{H^1},\label{lipconti}
\eeq
where the constant $C_T$ may depends on $\|\phi_{\tau 1}\|_{H^2}$, $\|\phi_{\tau 2}\|_{H^2}$, $\int_\tau^T\|S\|^2ds$, $\Omega$, $\tau$ and $T$.
\end{lemma}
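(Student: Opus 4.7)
The plan is to derive a Gronwall-type differential inequality for $\|\phi(t)\|_{H^1}^2$, where $\phi := \phi_1-\phi_2$, and then apply the classical Gronwall lemma. With also $\mu := \mu_1-\mu_2$, $\ub := \ub_1-\ub_2$, $p := p_1-p_2$, these quantities satisfy the homogeneous version of \eqref{1}--\eqref{3} (the source term $S$ cancels), so that in particular $\mathrm{div}\,\ub = 0$ in $\Omega$ and $\ub\cdot\nu = \partial_\nu\phi = \partial_\nu\mu = 0$ on $\partial\Omega$. The mean $\overline\phi(t) \equiv \overline{\phi_{\tau 1}}-\overline{\phi_{\tau 2}}$ is therefore constant in time.

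The main step is to test the $\phi_t$-equation with $\mu$. Writing $f'(\phi_1)-f'(\phi_2) = g\phi$ with $g := \phi_1^2+\phi_1\phi_2+\phi_2^2-1$ and using the identity $(\phi_t,g\phi) = \tfrac12\tfrac{d}{dt}(g,\phi^2) - \tfrac12(g_t,\phi^2)$, one obtains
\[
\tfrac12\tfrac{d}{dt}\!\left[\|\nabla\phi\|^2 + (g,\phi^2)\right] + \|\nabla\mu\|^2 = \tfrac12(g_t,\phi^2) + (\ub\phi_1 + \ub_2\phi,\nabla\mu),
\]
after decomposing $\ub_1\phi_1-\ub_2\phi_2 = \ub\phi_1+\ub_2\phi$ and using the vanishing boundary terms. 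The term $(g_t,\phi^2)$ is bounded via the pointwise estimate $|g_t|\leq C(|\phi_{1,t}|+|\phi_{2,t}|)$ (with $\|\phi_i\|_{L^\infty}$ uniformly controlled by Lemma \ref{regww} together with $H^2\hookrightarrow L^\infty$ in 2D) combined with the 2D Gagliardo--Nirenberg inequality $\|\phi\|_{L^4}^2 \leq C\|\phi\|\|\phi\|_{H^1}$, which yields a bound of the form $C(\|\phi_{1,t}\|+\|\phi_{2,t}\|)\|\phi\|_{H^1}^2$ with a time-integrable coefficient since $\phi_{i,t}\in L^2(\tau,T;L^2(\Omega))$.

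For the convective right-hand side, $(\ub_2\phi,\nabla\mu)$ is handled by H\"older and Young as $\tfrac14\|\nabla\mu\|^2 + C\|\ub_2\|_{L^4}^2\|\phi\|_{H^1}^2$, with $\|\ub_2\|_{L^4}^2\in L^1(\tau,T)$ since $\ub_2\in L^2(\tau,T;H^1(\Omega))\hookrightarrow L^2(\tau,T;L^4(\Omega))$; and for $(\ub\phi_1,\nabla\mu)$ I use the integration-by-parts identity $(\ub\phi_1,\nabla\mu) = -(\ub\cdot\nabla\phi_1,\mu)$, valid precisely because $\mathrm{div}\,\ub=0$ and $\ub\cdot\nu=0$, to shift the derivative onto $\phi_1$ and then bound by $\tfrac14\|\nabla\mu\|^2 + C\|\ub\|^2 + h_1(t)\|\phi\|_{H^1}^2$ after invoking Poincar\'e on $\mu-\overline\mu$ (so that $\|\mu\|\leq C\|\nabla\mu\|+C|\overline\mu|$ with $|\overline\mu|\leq C\|\phi\|$). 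The missing bound on $\|\ub\|^2$ is obtained by testing Darcy's equation with $\ub$ itself: the pressure term drops by $\mathrm{div}\,\ub=0$ and $\ub\cdot\nu=0$, and the same IBP trick on $(\ub,\mu\nabla\phi_1)$ produces an analogous estimate for $\|\ub\|^2$. Complementing these with a standard $L^2$-identity (testing with $\phi$, to secure coercivity $\|\nabla\phi\|^2+(g,\phi^2)+C_*\|\phi\|^2\simeq\|\phi\|_{H^1}^2$), and choosing the $\varepsilon$'s carefully, one assembles
\[
\tfrac{d}{dt}\|\phi\|_{H^1}^2 + \|\nabla\mu\|^2 + \|\ub\|^2 \leq h(t)\|\phi\|_{H^1}^2,
\]
with $h\in L^1(\tau,T)$ depending on $\|\phi_{\tau i}\|_{H^2}$, $\int_\tau^T\|S\|^2\,ds$, $\Omega$, $\tau$, and $T$. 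The classical Gronwall inequality then yields \eqref{lipconti}. The main technical obstacle I anticipate is the careful coefficient tracking needed to keep the $\|\nabla\mu\|^2$ contributions arising from the Darcy bound on $\|\ub\|$ absorbable on the left-hand side; the integration-by-parts identity above, which owes its validity to the solenoidal structure of the difference velocity, is what makes this absorption possible.
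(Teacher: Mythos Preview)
Your strategy—test the $\phi$-equation with $\mu$, test Darcy with $\ub$, add an $L^2$ test for coercivity, then Gronwall—is exactly the paper's route, and your handling of the time derivative via $(\phi_t, g\phi) = \tfrac12\tfrac{d}{dt}(g,\phi^2) - \tfrac12(g_t,\phi^2)$ with $g_t$ controlled through $\phi_{i,t}\in L^2(\tau,T;L^2)$ is a clean alternative to the paper's device of substituting the PDE into the residual term $3\!\int\phi_1\phi_2\phi\,\phi_t\,dx$.

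However, there is a genuine gap in your treatment of the cross term $(\ub\phi_1,\nabla\mu)$. You propose to bound it in the $\mu$-test by $\tfrac14\|\nabla\mu\|^2 + C\|\ub\|^2 + h_1\|\phi\|_{H^1}^2$ (with $C$ depending on $\sup_t\|\phi_1\|_{H^2}$), and then to control $\|\ub\|^2$ from the Darcy test, where the \emph{same} term reappears via your IBP identity $(\mu\nabla\phi_1,\ub) = -(\phi_1\ub,\nabla\mu)$ and produces $\|\ub\|^2 \leq C'\|\nabla\mu\|^2 + \cdots$, again with $C'$ of order $\sup_t\|\phi_1\|_{H^2}^2$. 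Substituting back, the coefficient in front of $\|\nabla\mu\|^2$ on the right is of order $\|\phi_1\|_{L^\infty(H^2)}^4$ and is \emph{not} small; no choice of Young parameters brings it below $1$. The circularity cannot be broken by ``careful $\varepsilon$'s'' alone.

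The resolution—and this is precisely what the paper does—is to \emph{add} the $\mu$-test and the Darcy test \emph{before} estimating anything. Your own IBP identity shows that the contribution $(\ub\phi_1,\nabla\mu)$ from the convective term and the contribution $(\mu\nabla\phi_1,\ub) = -(\ub\phi_1,\nabla\mu)$ from Darcy cancel exactly, leaving
\[
\tfrac12\tfrac{d}{dt}\bigl[\|\nabla\phi\|^2 + (g,\phi^2)\bigr] + \|\nabla\mu\|^2 + \|\ub\|^2 = \tfrac12(g_t,\phi^2) + (\ub_2\phi,\nabla\mu) + (\mu_2\nabla\phi,\ub).
\]
Now every right-hand term is harmless: each carries a single factor of $\|\nabla\mu\|$ or $\|\ub\|$ paired only with quantities built from the \emph{known} solutions $\ub_2,\mu_2$ and with $\|\phi\|_{H^1}$, so Young with small constants absorbs them without any restriction on the data. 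With this fix, the remainder of your outline goes through essentially as written.
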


\begin{proof}
The argument is similar to \cite[Section 6]{LTZ} with minor modifications due to the appearance of the source term $S$. For the convenience of the readers, we sketch the proof here. Let us set $\phi=\phi_1-\phi_2$, $\ub=\ub_1-\ub_2$ and $p=p_1-p_2$. Also denote $\mu_i=-\Delta\phi_i+f(\phi_i)$, $i=1,2$ and $\mu:=\mu_1-\mu_2=-\Delta \phi+f(\phi_1)-f(\phi_2)$.
Then $(\phi, \ub, p)$ solves the system
\beq
 \begin{cases}
 \phi_t+\ddiv(\ub\phi_1+\ub_2\phi)=\Delta \mu,\\
 \ub=-\nabla p+(\mu\nabla\phi_1+\mu_2\nabla\phi),\\
 \ddiv\ub=0,
 \end{cases}
 \label{uni1}
\eeq
subject to boundary and initial conditions
\beq
 \begin{cases}
  \partial_{\nu}\phi=\partial_{\nu}\mu=\ub\cdot\nu=0\quad\text{on }
\partial\Omega, \\
\phi(t,x)|_{t=\tau}=\phi_{\tau 1}-\phi_{\tau 2}.
\end{cases}\non
\eeq
Testing the first equation of \eqref{uni1} by $\phi$, after integration by parts we obtain that
\bea
&&\frac{1}{2}\frac{d}{dt}\|\phi\|^2+\|\Delta\phi\|^2\non\\
&=&\int_\Omega(f'(\phi_1)-f'(\phi_2))\Delta\phi dx
-\frac{1}{2}\int_{\Omega}S\phi^2dx+\int_\Omega \phi_1\ub\cdot \nabla\phi dx \non\\
&:=& I_1+I_2+I_3.
\eea
Using the uniform estimates \eqref{esh1} and Agmon's inequality, the terms $I_1$, $I_3$ can be estimated as in \cite[(6.9)]{LTZ} such that
\bea
I_1&\leq& (1+\|\phi_1^2+\phi_1\phi_2+\phi_2^2\|_{L^\infty})\|\phi\|\|\Delta \phi\|\non\\
&\leq& \frac{1}{4}\|\Delta \phi\|^2+C\|\phi\|^2,
\eea
\beq
I_3\leq  \|\ub\|\|\nabla\phi\|\|\phi_1\|_{L^\infty}\leq \frac18\|\ub\|^2+C\|\nabla \phi\|^2.
\eeq
Concerning $I_2$, we have
\bea
I_2&\leq& \frac12\|S\|\|\phi\|\|\phi\|_{L^\infty}\leq C\|S\|\|\phi\|^\frac32\|\phi\|_{H^2}^\frac12\non\\
&\leq& C\|S\|\|\phi\|^2+C\|S\|\|\phi\|^\frac32\|\Delta\phi\|^\frac12\non\\
&\leq& \frac{1}{4}\|\Delta \phi\|^2+C(\|S\|^2+1)\|\phi\|^2.
\eea
As a consequence, we have
\beq
\frac{d}{dt}\|\phi\|^2+\|\Delta\phi\|^2\leq \frac14\|\ub\|^2+C(\|S\|^2+1)(\|\nabla \phi\|^2+\|\phi\|^2).\label{d1a}
\eeq
Next, testing the first and the second equations of \eqref{uni1} by $\mu$ and $\ub$
 respectively, adding the results together, we obtain that
\bea
&&\frac{d}{dt}\left(\frac{1}{2}\|\nabla\phi\|^2-\frac12\|\phi\|^2+\frac14\int_\Omega \phi^4 dx\right)+\|\nabla\mu\|^2+\|\ub\|^2\non\\
&=&\int_{\Omega}\mu_2 \nabla\phi \cdot \ub dx+\int_{\Omega}\phi\ub_2\cdot\nabla\mu  dx+3\int_\Omega \phi_1\phi_2\phi \phi_t dx,\label{dif1}
\eea
The first two terms on the right-hand side of \eqref{dif1} can be estimated exactly like \cite[(6.6)-(6.7)]{LTZ} that
\bea
&& \int_{\Omega}\mu_2 \nabla\phi \cdot \ub dx+\int_{\Omega}\phi\ub_2\cdot\nabla\mu  dx\non\\
&\leq& \frac18\|\nabla \mu\|^2+\frac18\|\ub\|^2+C(\|\phi_2\|_{H^4}^2+\|\ub_2\|_{H^1}^2)(\|\nabla \phi\|^2+\|\phi\|^2).\label{dif2}
\eea
For the third term, we have
\bea
&& 3\int_\Omega \phi_1\phi_2\phi \phi_t dx\non\\
&=& -3\int_\Omega \ddiv(\ub \phi_1+\ub_2\phi) \phi_1\phi_2\phi dx+3\int_\Omega \phi_1\phi_2\phi\Delta\mu dx\non\\
&=& -3\int_\Omega (\ub \cdot\nabla \phi_1)\phi_1\phi_2\phi dx -3\int_\Omega S \phi_1\phi_2\phi^2 dx-3\int_\Omega (\ub_2\cdot\nabla \phi)\phi_1\phi_2\phi dx\non\\
&& -3\int_\Omega \nabla(\phi_1\phi_2\phi)\cdot\nabla\mu dx\non\\
&\leq& \frac18\|\ub\|^2+C\|\nabla \phi_1\|_{L^4}^2\|\phi_1\|_{L^\infty}^2\|\phi_2\|_{L^\infty}^2\|\phi\|_{L^4}^2+
C\|S\|\|\phi_1\|_{L^\infty}\|\phi_2\|_{L^\infty}\|\phi\|_{L^4}^2
\non\\
&&+C\|\ub_2\|_{L^4}\|\phi_1\|_{L^\infty}\|\phi_2\|_{L^\infty}\|\nabla \phi\|\|\phi\|_{L^4}+\frac18\|\nabla \mu\|^2+ C\|\phi_1\|_{L^\infty}^2\|\phi_2\|_{L^\infty}^2\|\nabla \phi\|^2\non\\
&&+C\|\nabla \phi_1\|_{L^\infty}^2\|\phi_2\|_{L^\infty}^2\|\phi\|^2+C\|\phi_1\|_{L^\infty}^2\|\nabla \phi_2\|_{L^\infty}^2\|\phi\|^2\non\\
&\leq& \frac18\|\ub\|^2+\frac18\|\nabla \mu\|^2\non\\
&&+C(\|\phi_2\|_{H^3}^2+\|\phi_1\|_{H^3}^2+\|\ub_2\|_{H^1}^2+\|S\|^2+1)(\|\nabla \phi\|^2+\|\phi\|^2).\label{dif3}
\eea
As a consequence, we infer from \eqref{dif1}--\eqref{dif3} that
\bea
&&\frac{d}{dt}\left(\frac{1}{2}\|\nabla\phi\|^2-\frac12\|\phi\|^2+\frac14\int_\Omega \phi^4 dx\right)+\frac34\|\nabla \mu\|^2+\frac34\|\ub\|^2\non\\
&\leq& C(\|\phi_2\|_{H^4}^2+\|\phi_1\|_{H^3}^2+\|\ub_2\|_{H^1}^2+\|S\|^2+1)(\|\nabla \phi\|^2+\|\phi\|^2).\label{d2a}
\eea
Adding \eqref{d1a} with \eqref{d2a}, we obtain that
\bea
&&
\frac{d}{dt}\left(\frac{1}{2}\|\nabla\phi\|^2+\frac12\|\phi\|^2+\frac14\int_\Omega \phi^4 dx\right)+\frac34\|\nabla \mu\|^2+\frac12\|\ub\|^2\non\\
&\leq& Ch(t)\left(\|\nabla \phi\|^2+\|\phi\|^2+\frac12\int_\Omega \phi^4 dx\right),
\eea
where $$h(t)=\|\phi_2\|_{H^4}^2+\|\phi_1\|_{H^3}^2+\|\ub_2\|_{H^1}^2+\|S\|^2+1.$$
Due to \eqref{h2es},
\beq\nonumber\int_{\tau}^{t}h(s)ds\leq C,\quad\forall t\in [\tau, T],\eeq
where the constant $C$ depends on $\|\phi_2(\tau)\|_{H^2}$, $\int_\tau ^T \|S\|^2 ds$, $\tau$ and $T$. Thus by the Gronwall inequality, we deduce that for all $t\in [\tau, T]$
\bea
&&\|\nabla\phi(t)\|^2+\|\phi(t)\|^2+\frac12\int_\Omega \phi^4 dx \non\\
&\leq& \displaystyle{e^{C\int_\tau^T h(s)ds}} \left(\|\nabla(\phi_{\tau 1}-\phi_{\tau 2})\|^2+\|\phi_{\tau 1}-\phi_{\tau 2}\|^2+\frac12\|\phi_{\tau 1}-\phi_{\tau 2}\|_{L^4}^4\right).\non
\eea
Our conclusion  \eqref{lipconti} easily follows from the above estimate. The proof is complete.
\end{proof}

\subsection{Associated process}

Recall the following definition (see \cite{GMR12}, we also refer to \cite{PZ07} for the definition of closed semigroups):
\begin{definition}\label{clodef}
Let $X$ be a metric space. The set class $\{U(t,\tau)\}_{t\geq \tau}$ that $U(t, \tau):X\to X$ is called a {\rm process} on $X$, if (i) $U(\tau, \tau)x=x$ for any $x\in X$; (ii) $U(t, \tau)x=U(t,s)U(s, \tau)x$ for any $\tau\leq s\leq t$ and any $x\in X$.

Moreover, a process $\{U(t,\tau)\}_{t\geq\tau}$
is said to be {\rm closed} on $X$, if for any $\tau\leq t$,
and any sequence $\{x_n\}\in X$ with $x_n\to x\in X$ and $U(t,\tau)x_n\to y\in X$, then $U(t,\tau)x=y$.
\end{definition}

 Then we infer from Lemma \ref{pconti} that
  \begin{proposition}\label{process}
  For any $S\in L^2_{loc}(\mathbb{R}; \dot L^2(\Omega))$, we are able to define a family of closed processes
$\{U(t,\tau)\}_{t\geq \tau}$ on $\mathcal{H}=H^2_N(\Omega)$ as follows:
\begin{equation*}
U(t,\tau)\phi_\tau=\phi(t; \tau, \phi_\tau),\quad \forall\,\phi_\tau\in H^2_N(\Omega), \quad \forall\, \tau\leq t,
\end{equation*}
where $\phi(t)$ is the unique global strong solution to problem \eqref{1}--\eqref{b2}.
 \end{proposition}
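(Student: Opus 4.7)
The plan is to verify in turn the three defining properties of a closed process from Definition \ref{clodef}. The first two items (identity at $\tau=t$ and the semigroup identity) are essentially bookkeeping: the equality $U(\tau,\tau)\phi_\tau=\phi_\tau$ is built into the definition of a strong solution via the initial condition, and the concatenation property $U(t,\tau)=U(t,s)U(s,\tau)$ for $\tau\le s\le t$ is a consequence of the uniqueness statement in Theorem \ref{Th2}. Indeed, the restriction of $\phi(\cdot;\tau,\phi_\tau)$ to $[s,T]$ is itself a strong solution of \eqref{1}--\eqref{b2} on $[s,T]$ with datum $\phi(s;\tau,\phi_\tau)$, so uniqueness up to an additive-in-$t$ function of the pressure forces it to coincide with $\phi(\cdot;s,\phi(s;\tau,\phi_\tau))$ at the level of $\phi$.

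The substantive point is closedness. Given $\tau\le t$ and sequences $\phi_{\tau,n}\to\phi_\tau$ in $H^2_N(\Omega)$ and $U(t,\tau)\phi_{\tau,n}\to y$ in $H^2_N(\Omega)$, I would argue as follows. From $\phi_{\tau,n}\to\phi_\tau$ in $H^2_N$ one gets both strong convergence in $H^1$ and a uniform bound $\sup_n\|\phi_{\tau,n}\|_{H^2}\le K$. Applying Lemma \ref{pconti} to the pair $(\phi_{\tau,n},\phi_\tau)$ on $[\tau,T]$ (with $T\ge t$) yields
\[
\|U(t,\tau)\phi_{\tau,n}-U(t,\tau)\phi_\tau\|_{H^1}^2\le C_T\,\|\phi_{\tau,n}-\phi_\tau\|_{H^1}^2,
\]
where $C_T$ depends only on $K$, $\|\phi_\tau\|_{H^2}$, $\int_\tau^T\|S\|^2\,ds$, $\tau$, $T$ and $\Omega$, hence is uniform in $n$. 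Sending $n\to\infty$ gives $U(t,\tau)\phi_{\tau,n}\to U(t,\tau)\phi_\tau$ in $H^1(\Omega)$. On the other hand the hypothesis $U(t,\tau)\phi_{\tau,n}\to y$ in $H^2_N$ implies the same convergence in the weaker $H^1$ topology. Uniqueness of limits in $H^1$ then gives $U(t,\tau)\phi_\tau=y$ as elements of $H^1$, and since both sides lie in $H^2_N$ they agree as elements of $H^2_N$.

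The only mildly delicate point is uniformity of the constant $C_T$ in Lemma \ref{pconti}: that constant depends on the $H^2$ norms of the two initial data, so I want to make sure one can take a single $C_T$ valid for the whole sequence. This is where the strong (rather than weak) convergence $\phi_{\tau,n}\to\phi_\tau$ in $H^2_N$ enters, since it furnishes the uniform bound $\sup_n\|\phi_{\tau,n}\|_{H^2}\le K$. No further estimate is required; the proposition is really a packaging of Theorem \ref{Th2} (global existence, uniqueness) and Lemma \ref{pconti} (continuous dependence in the lower-order $H^1$ topology) into the dynamical-systems language of closed processes, and in particular the gap between the phase space $H^2_N$ and the topology of continuous dependence is exactly what makes the process \emph{closed} rather than continuous.
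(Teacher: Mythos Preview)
Your proof is correct and follows exactly the approach the paper intends: the paper simply states that the proposition follows from Lemma \ref{pconti}, and your argument spells out precisely how the $H^1$ continuous-dependence estimate, combined with uniqueness of limits, yields closedness in $H^2_N$. Your remark on the uniformity of $C_T$ via the $H^2$ bound on the convergent sequence is the one detail worth making explicit, and you have done so.
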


\section{Pullback Attractor in 2D}

In this section, we study the long-time dynamics of the family of  processes $\{U(t,\tau)\}_{t\geq \tau}$ defined by the global strong solution to CHD problem \eqref{1}--\eqref{ini} in terms of the \emph{pullback attractor}. To this end, we first introduce some basic definitions and abstract results about pullback attractors for closed processes adopted from \cite{GMR12} (cf. \cite{Me10} for the case of closed cocycles).

\subsection{Preliminaries}
Consider a metric space $(X, \mathrm{d}_X)$. We denote by $\operatorname{dist}_X(B_1, B_2)$ the Hausdorff semi-distance in $X$ between two sets $B_1, B_2\subset X$ defined as $\operatorname{dist}_X(B_1, B_2)=\sup_{x\in B_1}\inf_{y\in B_2}\mathrm{d}_X(x, y)$.  $\mathcal{P}(X)$ stands for the family of all nonempty subsets of $X$. Let $\mathcal{D}$ be a nonempty class of families parameterized in time $\hat{D}=\{D(t):t\in\mathbb{R}\}\subset \mathcal{P}(X)$. The class $\mathcal{D}$ is called a \textit{universe} in $\mathcal{P}(X)$ (see \cite{MR09}).

We recall now some definitions that will be useful in the  subsequent analysis (see e.g., \cite{ct, GMR12}):

\begin{definition}\label{absor}
A family of nonempty sets $\hat{D}_0=\{D_0(t): t\in \mathbb{R}\}\subset\mathcal{P}(X)$  is said to be {\rm pullback $\mathcal{D}$-absorbing}
for the process $\{U(t,\tau)\}_{t\geq\tau}$, if for any $\hat{D}\in\mathcal{D}$
and any $t\in\mathbb{R}$, there exists a $\tau_0(t,\hat{D})\leq t$ such that
 $U(t,\tau)D(\tau)\subset D_0(t)$ for any $\tau\leq\tau_0(t,\hat{D})$.
\end{definition}

\begin{definition}\label{comp}
The process $\{U(t,\tau)\}_{t\geq\tau}$ is said to be {\rm pullback
$\mathcal{D}$-asymptotically compact}, if for any $t\in\mathbb{R}$ and any
$\hat{D}\in\mathcal{D}$, any sequence $\tau_n\to -\infty$ and any
sequence $x_n\in D(\tau_n)$, the sequence $\{U(t,\tau_n)x_n\}_{n=1}^{\infty}$
is relatively compact in $X$.
\end{definition}

\begin{definition}\label{pull}
A family $\mathcal{A}_\mathcal{D}=\{A_\mathcal{D}(t):t\in\mathbb{R}\}$ of nonempty subsets of
$X$ is said to be {\rm a pullback $\mathcal{D}$-attractor} for the process
$\{U(t,\tau)\}_{t\geq\tau}$ in $X$, if
\begin{itemize}
\item [(i)] $A_\mathcal{D}(t)$ is  compact in $X$ for any $t\in\mathbb{R}$,
\item [(ii)] ${\mathcal{A}}_\mathcal{D}$ is invariant, i.e., $U(t,\tau)A_\mathcal{D}(\tau)=A_\mathcal{D}(t)$
 for any $\tau\leq t$,
\item [(iii)] ${\mathcal{A}}_\mathcal{D}$ is pullback $\mathcal{D}$-attracting, i.e., for any $t\in\mathbb{R}$ and any $\hat{D}=\{D(t):t\in\mathbb{R}\}\in\mathcal{D}$, it holds
\begin{equation*}
\lim_{\tau\to -\infty}\operatorname{dist}_X(U(t,\tau)D(\tau),A_\mathcal{D}(t))=0.
\end{equation*}

\end{itemize}
\end{definition}

 The following abstract result on the existence of minimal pullback attractors for closed processes is proved in \cite{GMR12} (see also \cite{Me10} for the case of closed cocycles):

\begin{lemma} \label{34}
Consider  a closed process $\{U(t,\tau)\}_{t\geq\tau}$ in $X$. Let $\mathcal{D}$ be a universe in $\mathcal{P}(X)$. If the following conditions are satisfied:
\begin{itemize}
\item [(1)] there exists a family $\hat D_0=\{D_0(t): t\in \mathbb{R}\}\subset \mathcal{P}(X)$  such that $\hat D_0$ is pullback $\mathcal{D}$-absorbing for $\{U(t,\tau)\}_{t\geq\tau}$,
\item [(2)] $\{U(t,\tau)\}_{t\geq\tau}$ is pullback $\mathcal{D}$-asymptotically compact,
\end{itemize}
then there exists a minimal pullback $\mathcal{D}$-attractor
${\mathcal{A}}_\mathcal{D}=\{A_\mathcal{D}(t):t\in\mathbb{R}\}$ in $X$ given by
$$ A_\mathcal{D}(t)=\overline{\bigcup_{\hat D\in \mathcal{D}}\Lambda(\hat D, t)}^X,$$
where
\begin{equation*}
\Lambda(\hat D, t)= \bigcap_{s\leq t}\overline{\bigcup_{\tau\leq s}U(t,\tau)D(\tau)}^X, \quad \hat{D}\in \mathcal{D}.
\end{equation*}
\end{lemma}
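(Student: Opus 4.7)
The plan is to build $\mathcal{A}_\mathcal{D}$ as a family of pullback $\omega$-limit-type sets, and to show that the closedness of the process is exactly the ingredient needed to replace continuity in the standard invariance argument.

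First I would establish the key characterization of $\Lambda(\hat D,t)$ as a pullback $\omega$-limit set: $y\in\Lambda(\hat D,t)$ if and only if there exist sequences $\tau_n\to -\infty$ and $x_n\in D(\tau_n)$ such that $U(t,\tau_n)x_n\to y$ in $X$. One inclusion is immediate from the definition; the other follows by a standard diagonal selection, since $y\in\overline{\bigcup_{\tau\leq s}U(t,\tau)D(\tau)}^X$ for every $s\leq t$ furnishes, for each $s=-k$, a pair $(\tau_k,x_k)$ with $\tau_k\leq -k$, $x_k\in D(\tau_k)$, and $\mathrm{d}_X(U(t,\tau_k)x_k,y)<1/k$. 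From this characterization, pullback $\mathcal{D}$-asymptotic compactness applied to any such sequence shows that $\Lambda(\hat D,t)$ is nonempty and, through a further diagonal extraction on any sequence inside it, that $\Lambda(\hat D,t)$ is closed and sequentially relatively compact, hence compact in $X$.

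Next I would prove the pullback attraction property of $\mathcal{A}_\mathcal{D}$ by contradiction. Assume there exist $\hat D\in\mathcal{D}$, $t\in\mathbb R$, $\varepsilon_0>0$, a sequence $\tau_n\to -\infty$, and points $x_n\in D(\tau_n)$ with $\operatorname{dist}_X(U(t,\tau_n)x_n,A_\mathcal{D}(t))\geq\varepsilon_0$. By asymptotic compactness a subsequence of $U(t,\tau_n)x_n$ converges to some $y\in X$; by the characterization above, $y\in\Lambda(\hat D,t)\subset A_\mathcal{D}(t)$, contradicting $\operatorname{dist}_X(y,A_\mathcal{D}(t))\geq\varepsilon_0$. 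For compactness of $A_\mathcal{D}(t)$, using that $\hat D_0$ is absorbing one sees that every element of $\Lambda(\hat D,t)$, being a limit of $U(t,\tau_n)x_n=U(t,s)U(s,\tau_n)x_n$ with $U(s,\tau_n)x_n\in D_0(s)$ for all $n$ large, lies in the closed set $\overline{U(t,s)D_0(s)}$; combining with asymptotic compactness (or the characterization applied to the absorbing family) yields that $A_\mathcal{D}(t)$ is precompact, and being closed by construction it is compact.

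The decisive step is invariance $U(t,\tau)A_\mathcal{D}(\tau)=A_\mathcal{D}(t)$, and this is where the main obstacle lies: closedness of the process must substitute for the continuity of $U(t,\tau)$ used in the classical theorem. For $\subset$: given $y\in A_\mathcal{D}(\tau)$ pick $y_m\in\Lambda(\hat D_m,\tau)$ with $y_m\to y$; by the characterization and a diagonal argument choose $\sigma_m\to -\infty$ and $x_m\in D_m(\sigma_m)$ with $U(\tau,\sigma_m)x_m\to y$. Write $U(t,\sigma_m)x_m=U(t,\tau)\bigl(U(\tau,\sigma_m)x_m\bigr)$ and extract, by asymptotic compactness, a subsequence with $U(t,\sigma_m)x_m\to w$. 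The limit $w$ belongs to $A_\mathcal{D}(t)$ because $(U(t,\sigma_m)x_m)$ is a pullback trajectory starting from $D_m(\sigma_m)$ and $\hat D_m\in\mathcal{D}$ (after embedding into a single $\hat D$ or arguing inside $\Lambda(\hat D_0,t)\supset A_\mathcal{D}(t)$). Now the closedness of $\{U(t,\tau)\}$ applied to $U(\tau,\sigma_m)x_m\to y$ and $U(t,\tau)[U(\tau,\sigma_m)x_m]\to w$ gives $U(t,\tau)y=w\in A_\mathcal{D}(t)$. For $\supset$: given $z\in A_\mathcal{D}(t)$ represented as $z=\lim U(t,\tau_n)x_n$, write $U(t,\tau_n)x_n=U(t,\tau)U(\tau,\tau_n)x_n$, use asymptotic compactness at time $\tau$ to extract $U(\tau,\tau_n)x_n\to y\in A_\mathcal{D}(\tau)$, and apply closedness once more to conclude $z=U(t,\tau)y$.

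Finally, minimality follows by a short comparison argument: if $\mathcal{C}=\{C(t)\}$ is any other family of closed pullback $\mathcal{D}$-attracting sets, then any $y\in\Lambda(\hat D,t)$ is a limit of $U(t,\tau_n)x_n$ with $x_n\in D(\tau_n)$, and pullback attraction of $\mathcal{C}$ forces $\operatorname{dist}_X(U(t,\tau_n)x_n,C(t))\to 0$, so $y\in C(t)$; taking union over $\hat D\in\mathcal{D}$ and closing gives $A_\mathcal{D}(t)\subset C(t)$. Overall, the routine parts are the characterization, attraction, and minimality; the genuine technical point is the invariance, where the combination of pullback asymptotic compactness (to produce convergent subsequences of $U(t,\sigma_m)x_m$) with the closed-graph property of $U(t,\tau)$ (to identify limits with $U(t,\tau)y$) is the essential mechanism replacing continuity.
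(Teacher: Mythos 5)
The paper offers no proof of this lemma: it is quoted as an abstract result from \cite{GMR12} (cf.\ \cite{Me10}), and the only ``proof'' in the paper is that citation. Your argument reconstructs the proof of that reference, and its architecture is sound: the sequential characterization of $\Lambda(\hat D,t)$, the contradiction argument for attraction, the minimality argument, and --- most importantly --- the identification of where closedness of the process replaces continuity, namely in passing from $v_n\to v$ and $U(t,\tau)v_n\to w$ to $U(t,\tau)v=w$ in both inclusions of the invariance. That is exactly the mechanism used in \cite{GMR12}.

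There is, however, one step that does not go through as written. Pullback $\mathcal{D}$-asymptotic compactness (Definition \ref{comp}) is a statement about sequences $x_n\in D(\tau_n)$ drawn from a \emph{single} family $\hat D\in\mathcal{D}$, whereas in two places you apply it to sequences drawn from \emph{varying} families: in the compactness of $A_\mathcal{D}(t)$ (a point of the closure of $\bigcup_{\hat D}\Lambda(\hat D,t)$ is approximated by points of $\Lambda(\hat D_k,t)$ with $\hat D_k$ changing with $k$) and in the inclusion $U(t,\tau)A_\mathcal{D}(\tau)\subset A_\mathcal{D}(t)$, where you extract a convergent subsequence of $U(t,\sigma_m)x_m$ with $x_m\in D_m(\sigma_m)$. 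Your parenthetical fixes are not yet proofs: a general universe need not allow ``embedding into a single $\hat D$'', and the inclusion $A_\mathcal{D}(t)\subset\Lambda(\hat D_0,t)$ must itself be established. The correct repair is to use the absorbing property first: for $y=\lim_n U(t,\tau_n)x_n\in\Lambda(\hat D,t)$ and any fixed $s\le t$, one has $U(s,\tau_n)x_n\in D_0(s)$ for $n$ large, hence $U(t,\tau_n)x_n\in U(t,s)D_0(s)$, which yields $\Lambda(\hat D,t)\subset\Lambda(\hat D_0,t)$ for every $\hat D\in\mathcal{D}$ and therefore $A_\mathcal{D}(t)\subset\Lambda(\hat D_0,t)$. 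Once $\Lambda(\hat D_0,t)$ is known to be compact, all your limit extractions take place inside a fixed compact set and the argument closes. Note that this last compactness requires applying asymptotic compactness to $\hat D_0$ itself, i.e.\ either $\hat D_0\in\mathcal{D}$ (true in the paper's application, cf.\ Remark \ref{pullre}(ii) and the proof of Theorem \ref{T2}) or the slightly different hypothesis of pullback $\hat D_0$-asymptotic compactness used in \cite{GMR12}; under the hypotheses exactly as stated in the lemma, with $\hat D_0$ possibly outside $\mathcal{D}$, this point genuinely needs to be addressed rather than waved at.
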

\begin{remark}\label{pullre}
(i) Such a family ${\mathcal{A}}_\mathcal{D}$ is  {\rm minimal} in the sense that if $\hat{C}=\{C(t):t\in\mathbb{R}\}\subset\mathcal{P}(X)$ is a family of closed subsets such that for any $\hat{D}=\{D(t):t\in\mathbb{R}\}\in\mathcal{D}$,  $$\lim_{\tau\to -\infty}\operatorname{dist}_X(U(t,\tau)D(\tau),C(t))=0,$$ then ${\mathcal{A}}_\mathcal{D}(t)\subset C(t)$.

(ii) In the definition above, $\hat D_0$ does not necessarily belong to the class $\mathcal{D}$. Furthermore, if $\hat D_0\in \mathcal{D}$, then we have $A_\mathcal{D}(t)=\Lambda(\hat D_0, t)\subset \overline{D_0(t)}^X$.
\end{remark}

\subsection{Existence of pullback $\mathcal{D}_F^{\mathcal{H}_M}$-absorbing sets}

Since our system \eqref{1}--\eqref{3} preserves the spatial average of $\phi$ (see \eqref{mm}), it seems impossible
to construct a suitable absorbing set for the process $\{U(t,\tau)\}_{t\geq \tau}$ on the whole space $\mathcal{H}:=H^2_N(\Omega)$. Instead, we shall study the dynamics of problem \eqref{1}--\eqref{ini} confined on the phase space $\mathcal{H}_M$ (see \eqref{H0} for its definition).

For the sake of simplicity, in the subsequent text, we denote by $\mathcal{D}_F^{\mathcal{H}_M}$ the class of families $\hat D=\{D(t)=D: t\in \mathbb{R}\}$ with $D$ being a nonempty fixed bounded subset of $\mathcal{H}_M$ (i.e., $\hat D\subset \mathcal{P}(\mathcal{H}_M)$ and $D$ is parameterized in time but constant for all $t\in \mathbb{R}$, see \cite{CDF97}). Then $\mathcal{D}_F^{\mathcal{H}_M}$ is the universe we shall work on.

First, we prove the existence of a pullback $\mathcal{D}_F^{\mathcal{H}_M}$-absorbing family of sets for the process $\{U(\tau, t)\}_{t\geq \tau}$:

\begin{proposition}\label{plab} Let $d=2$. Suppose that $S\in L^2_b(\mathbb{R}; \dot L^2(\Omega))$.
Then there is a family $\hat D_0\subset \mathcal{D}_F^{\mathcal{H}_M}$ that is pullback $\mathcal{D}_F^{\mathcal{H}_M}$-absorbing for the processes $\{U(t,\tau)\}_{t\geq\tau}$ associated with problem \eqref{1}--\eqref{ini}.
\end{proposition}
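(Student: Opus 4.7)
The plan is to construct the pullback $\mathcal{D}_F^{\mathcal{H}_M}$-absorbing family $\hat D_0$ in two stages: first establishing uniform bounds in $H^1$, then upgrading to $H^2$ via parabolic smoothing. Fix $\hat D \in \mathcal{D}_F^{\mathcal{H}_M}$ and write $\phi(t) = U(t,\tau)\phi_\tau$ for $\phi_\tau \in D$. By the mass conservation \eqref{mm} the trajectory remains in $\mathcal{H}_M$, so it suffices to bound $\|\phi(t)\|_{H^2}$ by a constant depending only on $M$, $\Omega$ and $\|S\|_{L^2_b(\mathbb{R};\dot L^2(\Omega))}$ once $\tau$ is taken sufficiently negative. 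As in Lemma \ref{regww}, all estimates below are carried out formally on Galerkin approximations and transferred to the strong solution in the limit.

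For the $H^1$ stage, the starting point is the energy identity of Section 3: testing \eqref{1} by $\mu$, \eqref{2} by $\ub$, and using the Poisson equation for the pressure (Lemma \ref{espp}) together with Young's inequality yields, cf.\ \eqref{est1a},
\begin{equation*}
\frac{d}{dt}E_0(\phi) + \tfrac{1}{2}\|\nabla \mu\|^2 + \|\ub\|^2 \leq C\|S\|^2 E_0(\phi) + C\|S\|^2.
\end{equation*}
To convert this into a genuinely dissipative inequality I would establish the coercivity
\begin{equation*}
E_0(\phi) \leq C\|\nabla\mu\|^2 + C_M,
\end{equation*}
obtained by testing $\mu = -\Delta\phi + f'(\phi)$ against $\phi - \overline{\phi}$, applying Poincar\'e's inequality to $\mu - \overline{\mu}$, and exploiting $|\overline{\phi}| \leq M$ to show $\int_\Omega f'(\phi)(\phi - \overline{\phi})\,dx \geq c\|\phi\|_{L^4}^4 - C_M$ (the cross term $\overline{\phi}\int \phi^3$ is absorbed by Young's inequality). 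Substitution produces
\begin{equation*}
\frac{d}{dt}E_0(\phi) + \alpha E_0(\phi) \leq C\|S\|^2 E_0(\phi) + C_M(1+\|S\|^2),
\end{equation*}
for some $\alpha>0$. Since $S\in L^2_b(\mathbb{R};\dot L^2(\Omega))$, the coefficient $\|S\|^2$ is not pointwise small but only integrable on unit intervals, so I would invoke the generalized Gronwall inequality of Lemma \ref{Gron1} to conclude $\|\phi(t)\|_{H^1}^2 \leq R_1^2$ for all $\tau \leq \tau_1(t, \hat D)$, where $R_1$ depends only on $M$, $\Omega$ and $\|S\|_{L^2_b}$.

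For the $H^2$ stage I would rely on the higher-order estimate \eqref{ap4} already derived in Lemma \ref{regww},
\begin{equation*}
\frac{d}{dt}\|\Delta\phi\|^2 + \|\Delta^2\phi\|^2 \leq C h(t)\|\Delta\phi\|^2 + C h(t),
\end{equation*}
with $h(t) = 1 + \|S\|^2 + \|\nabla\Delta\phi\|^2$. Once the $H^1$ bound is available, $\int_t^{t+1} h(s)\,ds$ becomes uniformly controlled: the integrated dissipation $\int_t^{t+1}\|\nabla \mu\|^2 ds$ from the previous stage, combined with elliptic regularity applied to $\mu = -\Delta\phi + f'(\phi)$, bounds $\int_t^{t+1}\|\nabla\Delta\phi\|^2 ds$. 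The classical uniform Gronwall lemma \cite{TE} then yields $\|\Delta\phi(t)\|^2 \leq R_2^2$ for $\tau \leq \tau_2(t, \hat D)$, and the family $\hat D_0(t) \equiv \{\phi \in \mathcal{H}_M : \|\phi\|_{H^2} \leq R_1 + R_2\}$ is pullback $\mathcal{D}_F^{\mathcal{H}_M}$-absorbing. The main obstacle is the coercivity step in $H^1$ together with its interplay with the non-decaying multiplier $\|S\|^2 E_0(\phi)$ on the right-hand side; this is precisely what forces the use of the non-standard Lemma \ref{Gron1} in lieu of a classical Gronwall argument, and it is this ingredient that distinguishes the translation-bounded regime from the autonomous case already treated in the literature.
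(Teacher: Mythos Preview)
Your two-stage plan matches the paper's overall structure, and the $H^2$ upgrade via \eqref{ap4} plus the uniform Gronwall lemma is exactly what the paper does. The coercivity estimate $E_0(\phi)\leq C\|\nabla\mu\|^2+C_M$ obtained by testing $\mu$ against $\phi-\overline{\phi}$ is also legitimate, though the paper instead tests the equation itself against $A^{-1}(\phi-\overline{\phi})$ (see \eqref{h-1es}--\eqref{ab1}) and then combines with the energy balance.

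The substantive problem is the $H^1$ stage. You start from \eqref{est1a}, which after adding your coercivity term reads
\[
\frac{d}{dt}E_0(\phi)+\alpha E_0(\phi)\leq C\|S\|^2 E_0(\phi)+C_M(1+\|S\|^2),
\]
and then you propose to apply Lemma~\ref{Gron1}. But Lemma~\ref{Gron1} requires the right-hand side to be of the form $f(t)y^{\omega}(t)$ with $\omega\in[\tfrac12,1)$; your inequality has $\omega=1$, which is explicitly outside its scope (cf.\ the remark following the lemma). This is not a technicality: for a linear coupling and translation-bounded $S$, standard Gronwall gives a factor $\exp\big(\int_\tau^t(C\|S\|^2-\alpha)\,ds\big)$, and since $\int_\tau^t\|S\|^2\,ds$ can grow like $\|S\|_{L^2_b}^2(t-\tau)$, there is no dissipative bound once $C\|S\|_{L^2_b}^2>\alpha$. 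No absorbing set results.

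The paper avoids this precisely by \emph{not} using \eqref{est1a}. Instead, in \eqref{est1a1}--\eqref{ge1} the right-hand side is re-estimated more carefully (2D Gagliardo--Nirenberg, sharper Young) so that the source contribution becomes
\[
C\|S\|^2\big(1+\|\phi\|_{L^4}^{8/3}+\|\nabla\phi\|^{4/3}\big)\leq C\|S\|^2\,\Psi_1^{2/3},
\]
i.e.\ a genuinely sublinear power of the energy-type quantity $\Psi_1$. Only then is Lemma~\ref{Gron1} applicable (with $\omega=\tfrac23$), and this is the whole point of that lemma. Your outline identifies Lemma~\ref{Gron1} as the key tool but feeds it an inequality to which it does not apply; to repair the argument you must replace \eqref{est1a} by the sharper balance \eqref{est1a2} before adding the dissipative term.
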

\begin{proof} In the subsequent proof, $C$, $C_i$ denote constants that may depend on $\Omega$, $M$, but are independent of the initial datum for $\phi$. $\mathcal{Q}_i(\cdot)$ stand for certain monotone increasing functions.

Multiplying  \eqref{1} by $\mu$ and \eqref{2} by  $\ub$, integrating over $\Omega$ then adding the resultants together (comparing with \eqref{est1a0} for the approximate solutions), we deduce from the H\"older inequality and the Poincar\'e inequality that
\bea
&&\frac{d}{dt} E(\phi) +\|\nabla
\mu\|^2+\|\ub\|^2\non\\
&=&\int_{\Omega}S(\mu-\overline{\mu})(1-\phi)
dx+\int_{\Omega} S\left(A^{-1}S-A^{-1}\left(\ddiv\left((\mu-\overline{\mu})\nabla\phi\right)\right)\right)dx\non\\
&\leq&\|S\|\|\mu-\overline{\mu}\|+
\|S\|\|\mu-\overline{\mu}\|_{L^4}\|\phi\|_{L^4}\non\\
&& +\|S\|\left(\|A^{-1}S\|+\|A^{-1}\left(\ddiv\left((\mu-\overline{\mu})\nabla\phi\right)\right)\|\right)\non\\
&\leq&C\|S\|\|\nabla\mu\|(1+\|\phi\|_{L^4})+C\|S\|\left(\|A^{-1}S\|
+\|\nabla\mu\|\|\nabla\phi\|_{L^{\frac32}}\right),
\label{est1a1}
\eea
where $$E(\phi)=\int_{\Omega}\left(\frac{1}{2}|\nabla\phi|^2+f(\phi)\right)dx.$$
By the two dimensional Gagliardo-Nirenberg inequality and Young's inequality, we have
\bea
&& C\|S\|\|\nabla\mu\|(1+\|\phi\|_{L^4})+C\|S\|\left(\|A^{-1}S\|
+\|\nabla\mu\|\|\nabla\phi\|_{L^{\frac32}}\right)
\non\\
&\leq&\frac{1}{4}\|\nabla\mu\|^2+C\|S\|^2(1+\|\phi\|_{L^4}^2+\|\nabla\phi\|_{L^{\frac32}}^2)\non\\
&\leq&\frac{1}{4}\|\nabla\mu\|^2+C\|S\|^2\left(1+\|\nabla\phi\|^{\frac23}\|\phi\|_{L^4}^{\frac43}+\|\phi\|_{L^4}^2\right)\non\\
&\leq&\frac{1}{4}\|\nabla\mu\|^2+C\|S\|^2\left(1+\|\phi\|_{L^4}^{\frac83}+\|\nabla\phi\|^{\frac43}\right).\label{ge1}
\eea
From estimates \eqref{est1a1}--\eqref{ge1} and  Young's inequality we infer that
\bea
&&
 \frac{d}{dt}\int_{\Omega}\left(\frac{1}{2}|\nabla\phi|^2+f(\phi)\right)dx
 +\frac{1}{2}\|\nabla\mu\|^2+\|\ub\|^2\non\\
 &\leq&
  C_1\|S\|^2\left(1+\|\phi\|_{L^4}^{\frac83}+\|\nabla\phi\|^\frac43\right).
  \label{est1a2}
\eea
Recalling the mass conservation property \eqref{mm}, we rewrite equation \eqref{1} in the following form
\beq
\label{3m}
(\phi-\overline{\phi})_t+\Delta^2(\phi-\overline{\phi})-\Delta (f'(\phi)-\overline{f'(\phi)})=S-\ddiv(\ub\phi).
\eeq
Multiplying the above equation by $A^{-1}(\phi-\overline{\phi})$, integrating by parts, we obtain that
\bea
&&\frac{1}{2}\frac{d}{dt}\|A^{-\frac{1}{2}}(\phi-\overline{\phi})\|^2+\|\nabla\phi\|^2+\int_{\Omega}(f'(\phi)-\overline{f'(\phi)})
(\phi-\overline\phi)dx\non\\
&=&\int_{\Omega}\left(S-\ddiv(\ub\phi)\right)A^{-1}(\phi-\overline{\phi})dx.\label{h-1es}
\eea
By Young's inequality, we have
\bea
&& \int_{\Omega}(f'(\phi)-\overline{f'(\phi)})
(\phi-\overline\phi)dx=\int_{\Omega}f'(\phi)(\phi-\overline\phi)dx\non\\
&=& \int_\Omega (\phi^3-\phi)(\phi-\overline{\phi}) dx\non\\
&=&\int_{\Omega}(\phi^4-\phi^2) dx -|\overline\phi|\int_\Omega \phi^3dx +|\Omega||\overline \phi|^2\non\\
&=&\int_{\omega}(2f(\phi)+\frac{1}{2}\phi^4)dx-|\overline\phi|\int_\Omega \phi^3dx +|\Omega||\overline \phi|^2\non\\
&\geq&2\int_{\Omega}f(\phi)dx-C_2.\eea
Moreover, by Young's inequality and Poincar\'{e}'s
 inequality, the right-hand side of  \eqref{h-1es} can be estimated as follows
\bea &&
\int_{\Omega}\left(S-\ddiv(\ub\phi)\right)A^{-1}(\phi-\overline{\phi})dx\non\\
&\leq&\int_{\Omega}S A^{-1}(\phi-\overline{\phi})dx+\int_{\Omega}\phi\ub \cdot\nabla A^{-1}(\phi-\overline{\phi})dx\non\\
&\leq&\|A^{-1}(\phi-\overline{\phi})\|\|S\|+\|\ub\|\|\phi\|_{L^4}\|\nabla A^{-1}(\phi-\overline{\phi})\|_{L^4}\non\\
&\leq&\frac{1}{2}\|\nabla\phi\|^2+C\|S\|^2+\frac{1}{2\eta}\|\ub\|^2+\frac{\eta}{2}\|\phi\|_{L^4}^2\|\nabla A^{-1}(\phi-\overline{\phi})\|_{L^4}^2\non\\
&\leq&\frac{1}{2}\|\nabla\phi\|^2+\frac{1}{2\eta}\|\ub\|^2+C\eta\|\phi\|_{L^4}^2(\|\phi\|_{L^4}^2+|\overline{\phi}|^2)+C\|S\|^2\non\\
&\leq&\frac{1}{2}\|\nabla\phi\|^2+\frac{1}{2\eta}\|\ub\|^2+(C_3\eta\|\phi\|_{L^4}^4+C_3M^2\eta\|\phi\|_{L^4}^2)+C_3\|S\|^2,\non
\eea
where $\eta>0$ is a constant to be specified later.  Since
\bea
C_3\eta\|\phi\|_{L^4}^4+C_3M^2\eta\|\phi\|_{L^4}^2
&\leq& C_3\eta\left(1+\frac{M^2}{4}\right)\|\phi\|^4_{L^4}+C_3 M^2\eta\non\\
&\leq&C_3\eta(8+2M^2)\int_{\Omega}f(\phi)dx+C_3(4+2M^2)\eta,\non
\eea
we take $\eta=\frac{1}{C_3(8+2M^2)}$ and deduce that
\beq \label{ab1}
\frac{d}{dt}\|A^{-\frac{1}{2}}(\phi-\overline{\phi})\|^2+\|\nabla\phi\|^2+2\int_{\Omega}f(\phi)dx
\leq C\|S\|^2+C_3(8+2M^2)\|\ub\|^2+C_4.
\eeq
Multiplying \eqref{ab1} by $C_5=\frac{1}{C_3(16+4M^2)}$ and adding the resultant up with \eqref{est1a2} gives
\bea
 &&\frac{d}{dt}\left(E(\phi)+C_5\|A^{-\frac{1}{2}}(\phi-\overline{\phi})\|^2\right)\non\\
 && +\frac{1}{2}\|\nabla \mu\|^2+\frac{1}{2}\|\ub\|^2+C_5\|\nabla\phi\|^2+2C_5\int_{\Omega}f(\phi)dx\non\\
&\leq& C_6\|S\|^2\left(1+\|\phi\|_{L^4}^{\frac83}+\|\nabla\phi\|^\frac43\right)+C_7.
\label{ab2}
\eea
It is easy to see that there exist constants $C_8$, $C_9$ that are independent of $\phi$ such that
\beq
\nonumber C_8(\|\nabla\phi\|^2+\|\phi\|^4_{L^4})-C_9\leq E(\phi)+C_5\|A^{-\frac{1}{2}}(\phi-\overline{\phi})\|^2\leq C_8(\|\nabla\phi\|^2+\|\phi\|^4_{L^4})+C_9.
\eeq
Then we define $\Psi_1(t):=E(\phi)+C_5\|A^{-\frac{1}{2}}(\phi-\overline{\phi})\|^2+C_9+1$, which satisfies
\beq
\Psi_1(t)\geq \max\left\{1,C_8(\|\nabla\phi\|^2+\|\phi\|^4_{L^4})\right\}.\label{PHI1}
\eeq
Then it follows from \eqref{ab2}  and  Young's inequality that
\beq\label{ab3}
\frac{d}{dt}\Psi_1(t)+C_{10}\Psi_1(t)+\frac12\|\nabla\mu\|^2+\frac12\|\ub\|^2\leq C_{11}\|S\|^2\Psi_1^{\frac23}(t)+C_{11}(1+\|S\|^2).
\eeq
Since $S\in L^2_b(\mathbb{R}; \dot L^2(\Omega))$, then applying Lemma \ref{Gron1} in Appendix with $n=1$ and $\omega=a_1=\frac23$, we obtain the following dissipative estimates
\beq
\label{disphi}
\Psi_1(t)\leq C_{13}\Psi_1(\tau)e^{-\frac34 C_{10}(t-\tau)}+\mathcal{Q}_1\left(\|S\|_{L^2_b(\mathbb{R};\dot L^2(\Omega))}^2\right),\quad \forall\,t\geq\tau.
\eeq
It follows from the above estimate and \eqref{PHI1} that
\beq
\|\phi(t)\|_{H^1}^2\leq \mathcal{Q}_2(\|\phi_\tau\|_{H^1}^2)e^{-C_{14}(t-\tau)}+\mathcal{Q}_3\left(\|S\|_{L^2_b(\mathbb{R};\dot L^2(\Omega))}^2\right).\label{h1ab}
\eeq

As a consequence, we deduce from \eqref{h1ab} that for any $t\in\mathbb{R}$, $\hat D\in \mathcal{D}_F^{\mathcal{H}_M}$, there exists a time $\tau_1(\hat D, t)<t-3$ such that
\beq
\|\phi(r; \tau, \phi_\tau)\|_{H^1}^2\leq \rho_1, \quad \forall\, r\in[t-3,t],\ \tau\leq \tau_1(\hat D, t), \ \phi_\tau\in D\in \hat D,
\eeq
where $$\rho_1=1+\mathcal{Q}_3\left(\|S\|_{L^2_b(\mathbb{R};\dot L^2(\Omega))}^2\right).$$
Besides, integrating \eqref{ab3}, we infer that
\beq
\sup_{r\in[t-2,t]}\int_{r-1}^{r}\left(\|\nabla\mu(s)\|^2+\|\ub(s)\|^2\right)ds
\leq \mathcal{Q}_4\left(\rho_1, \|S\|_{L^2_b(\mathbb{R};\dot L^2(\Omega))}^2\right).\label{bb0}
\eeq
for $\tau\leq \tau_1(\hat D, t)$ and $\phi_\tau\in D\in \hat D$, which together with \eqref{h1ab} and the Sobolev embedding theorem yields
\beq
\sup_{r\in[t-2,t]}\int_{r-1}^{r}\|\phi\|_{H^3}^2 ds
\leq \mathcal{Q}_5\left(\rho_1, \|S\|_{L^2_b(\mathbb{R};\dot L^2(\Omega))}^2\right).\label{bb00}
\eeq
Next, testing \eqref{3} by $\Delta^2\phi$, using the estimate \eqref{h1ab} and a similar argument in Lemma \ref{regww}, we can still obtain the differential inequality \eqref{ap4} for $\|\Delta\phi\|^2$, namely,
\beq\begin{split}\label{ap4a}
\frac{d}{ds}\|\Delta\phi(s)\|^2+\|\Delta^2\phi(s)\|^2\leq Ch(s)\|\Delta\phi\|^2+Ch(s),
\end{split}
\eeq
for a.e. $s\in [t-3,t]$, $\tau\leq \tau_1(\hat D, t)$ and $\phi_\tau\in D\in \hat D$, here $h(s)=1+\|S\|^2+\|\nabla\Delta\phi\|^2$, and the constant $C$ now depends on $\rho_1$, $\Omega$ and $\|S\|_{L^{2}_b(\mathbb{R};L^{2})}$.

Using \eqref{bb00}, \eqref{ap4a} and the uniform Gronwall inequality \cite[Lemma III.1.1]{TE}, we can deduce that
\beq\label{delab}
 \|\Delta\phi(r)\|^2\leq\mathcal{Q}_6\left(\rho_1, \|S\|_{L^2_b(\mathbb{R};\dot L^2(\Omega))}^2\right),\quad\forall\, r\in[t-2,t].
 \eeq
Thus, it follows from \eqref{h1ab} and \eqref{delab} that
\beq\label{h2ab}
\|\phi(r; \tau, \phi_\tau)\|_{H^2}^2\leq \rho_2, \quad \forall\, r\in[t-2,t],\ \tau\leq \tau_1(\hat D, t), \ \phi_\tau\in D\in \hat D
\eeq
where $\rho_2$ depends on $\rho_1$, $\|S\|_{L^2_b(\mathbb{R};\dot L^2(\Omega))}^2$, $M$ and $\Omega$.

In summary, we can take the family
$$\hat D_0=\left\{D_0(t)=\mathcal{B}_M(0,\rho_2^\frac12), \ t\in \mathbb{R}\right\}\in \mathcal{D}^{\mathcal{H}_M}_F,$$
where $\mathcal{B}_M(0,\rho_2^\frac12)$ is the closed ball in $\mathcal{H}_M$ of center zero and radius $\rho_2^\frac12$. Then $\hat D_0$ satisfies that for any $t\in \mathbb{R}$ and any family $\hat D\in \mathcal{D}^{\mathcal{H}_M}_F$, there exists a time $\tau_0(\hat D, t)<t$ such that
$$U(t,\tau)D(\tau)\subset D_0(t), \quad \forall\, \tau\leq \tau_0(\hat D, t), \ D(t)\in \hat D.$$
 This completes the proof.
\end{proof}

Using the uniform estimates obtained in the above proposition and the Sobolev embedding theorem, indeed we can also prove the following
\begin{corollary}\label{hies1ab}
For any $t\in \mathbb{R}$ and any family $\hat D\in \mathcal{D}^{\mathcal{H}_M}_F$, there exists a time $\tau_0(\hat D, t)<t$ such that
\beq\non
\sup_{r\in[t-1,t]}\int_{r-1}^r\left(\|\phi(s)\|_{H^4}^2+\|\ub(s)\|_{H^1}^2+\|\phi_t(s)\|^2\right)ds\leq \rho_3,\quad \forall\, \tau\leq \tau_0(\hat D, t),\ \phi_\tau\in D(\tau).
\eeq
\end{corollary}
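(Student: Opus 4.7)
The plan is to extract the three desired bounds from the estimates already established inside the proof of Proposition \ref{plab}, simply by going one step further with the elliptic regularity of the equations for $\mu$, $p$ and the time derivative.

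First, from Proposition \ref{plab} we have, for $\tau\leq\tau_0(\hat D,t)$ and $\phi_\tau\in D(\tau)$, the uniform bound $\sup_{r\in[t-2,t]}\|\phi(r)\|_{H^2}^2\leq\rho_2$, together with the integrated $H^3$ estimate \eqref{bb00}. Integrating the differential inequality \eqref{ap4a} over any subinterval $[r-1,r]\subset[t-2,t]$ and using these two facts, I would obtain $\int_{r-1}^r\|\Delta^2\phi(s)\|^2\,ds\leq C$ uniformly. Since $\phi\in H^4_N$ satisfies $\partial_\nu\phi=\partial_\nu\Delta\phi=0$, elliptic regularity gives $\|\phi\|_{H^4}^2\leq C(\|\Delta^2\phi\|^2+\|\phi\|^2)$, hence the desired integrated $H^4$ control of $\phi$. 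Consequently $\mu=-\Delta\phi+f'(\phi)$ is controlled in $H^2$: $\|\mu\|_{H^2}\leq C(\|\phi\|_{H^4}+\mathcal{Q}(\|\phi\|_{H^2}))$, so $\int_{r-1}^r\|\mu\|_{H^2}^2\,ds$ is also uniformly bounded.

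Next I would estimate $p$ via the Poisson equation \eqref{p}. Elliptic regularity with homogeneous Neumann data yields $\|p\|_{H^2}\leq C(\|S\|+\|\ddiv(\mu\nabla\phi)\|)$, and
\begin{equation*}
\|\ddiv(\mu\nabla\phi)\|\leq \|\nabla\mu\|_{L^4}\|\nabla\phi\|_{L^4}+\|\mu\|_{L^\infty}\|\Delta\phi\|\leq C\|\mu\|_{H^2}\|\phi\|_{H^2}
\end{equation*}
by 2D Sobolev embeddings. Therefore $\int_{r-1}^r\|p\|_{H^2}^2\,ds$ is uniformly bounded by the $H^2$-in-time-integrated control of $\mu$ and $L^2_b$-boundedness of $S$. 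From the Darcy law \eqref{2} we get $\|\ub\|_{H^1}\leq C(\|p\|_{H^2}+\|\mu\nabla\phi\|_{H^1})$, and $\|\mu\nabla\phi\|_{H^1}$ is bounded by $C\|\mu\|_{H^2}\|\phi\|_{H^2}$ via the algebra property of $H^2$ in 2D (or a direct Gagliardo--Nirenberg argument). This gives the integrated $\ub$ in $H^1$ bound.

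Finally, for $\phi_t$ I would use equation \eqref{1} rewritten as $\phi_t=\Delta\mu-\ub\cdot\nabla\phi-\phi S+S$ (using $\ddiv\ub=S$). Then
\begin{equation*}
\|\phi_t\|\leq\|\mu\|_{H^2}+\|\ub\|_{L^4}\|\nabla\phi\|_{L^4}+(1+\|\phi\|_{L^\infty})\|S\|,
\end{equation*}
and each term is square-integrable in time on $[r-1,r]$ with a uniform bound, thanks to the estimates above and the 2D embeddings $H^1\hookrightarrow L^4$, $H^2\hookrightarrow L^\infty$. Combining the three bounds gives a constant $\rho_3$, depending only on $\rho_2$, $M$, $\Omega$, and $\|S\|_{L^2_b(\mathbb{R};\dot L^2(\Omega))}$, for which the conclusion holds.

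The main technical step is the integrated $H^4$ bound, since it forces us to combine the uniform Gronwall estimate already used for $\|\Delta\phi\|$ with an actual integration of \eqref{ap4a}; once that is in hand, everything else is elliptic regularity applied term by term in the Darcy and Cahn--Hilliard equations.
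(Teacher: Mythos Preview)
Your proposal is correct and follows exactly the approach the paper has in mind; in fact, the paper only states that the corollary follows from the uniform estimates in Proposition~\ref{plab} and the Sobolev embedding theorem, without writing any details, so your sketch is a faithful (and more explicit) expansion of that one-line argument. The key step---integrating \eqref{ap4a} over $[r-1,r]\subset[t-2,t]$ against the uniform $H^2$ bound \eqref{h2ab} and the integrated $H^3$ control \eqref{bb00}---and the subsequent elliptic regularity for $\mu$, $p$, $\ub$ and $\phi_t$ are precisely what the paper carries out earlier (cf.\ \eqref{muh2}--\eqref{ph2}) in the local-existence section and simply re-invokes here.
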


\subsection{Pullback $\mathcal{D}_F^{\mathcal{H}_M}$-asymptotic compactness}

Now we proceed to prove the pullbak $\mathcal{D}_F^{\mathcal{H}_M}$-asymptotic compactness  for the universe $\mathcal{D}_F^{\mathcal{H}_M}$ in $\mathcal{H}_M$.

\begin{proposition}\label{compact} Suppose that $S\in L^2_b(\mathbb{R}; \dot L^2(\Omega))$. Then the family of process $\{U(t,\tau)\}_{t\geq \tau}$ is pullback $\mathcal{D}^{\mathcal{H}_M}_F$-asymptotically compact.
\end{proposition}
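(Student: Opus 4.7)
My plan is to establish pullback $\mathcal{D}_F^{\mathcal{H}_M}$-asymptotic compactness via the continuity/energy method indicated in the introduction (cf.~\cite{GMR12,MPR09}). Fix $t\in\mathbb{R}$, $\hat D=\{D(r)\equiv D\}\in\mathcal{D}_F^{\mathcal{H}_M}$, a sequence $\tau_n\to -\infty$ and points $\phi_{\tau_n}\in D$, and set $\phi_n(r):=U(r,\tau_n)\phi_{\tau_n}$. For $n$ large enough Proposition~\ref{plab} and Corollary~\ref{hies1ab} provide uniform bounds on $[t-2,t]$ for $\phi_n$ in $L^\infty(H_N^2)\cap L^2(H_N^4)$, for $\partial_r\phi_n$ in $L^2(L^2)$, and for $\mu_n$, $\ub_n$ in $L^2(H^2)$, $L^2(\mathbf{H}^1)$ respectively. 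By Banach--Alaoglu and Aubin--Lions--Simon, a subsequence satisfies $\phi_n\rightharpoonup \phi$ weakly-$*$ in $L^\infty(t-2,t;H_N^2)$, weakly in $L^2(t-2,t;H_N^4)$, strongly in $C([t-2,t];H^s)$ for every $s\in[0,2)$ and strongly in $L^2(t-2,t;H^3)$; standard identification shows that $\phi$ is a strong solution of the CHD system on $[t-2,t]$. Since $\phi_n(t)\rightharpoonup \phi(t)$ weakly in $H^2$ and the strong $H^{2-\epsilon}$ convergence already gives $f'(\phi_n(t))\to f'(\phi(t))$ strongly in $L^2$, the goal reduces to proving $\|\Delta\phi_n(t)\|\to\|\Delta\phi(t)\|$.

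For this I would derive a higher-order energy identity by testing \eqref{1} with $\partial_r\phi$ and using integration by parts together with $\mu=-\Delta\phi+f'(\phi)$ and the boundary conditions $\partial_\nu\mu=\partial_\nu\phi=0$:
\begin{equation*}
\frac{d}{dr}\mathcal{G}(\phi) + \|\partial_r\phi\|^2 = -(\ddiv(\ub\phi),\partial_r\phi) - (f''(\phi)\partial_r\phi,\Delta\phi) + (S,\partial_r\phi),
\end{equation*}
where $\mathcal{G}(\phi):=\tfrac12\|\Delta\phi\|^2 - (f'(\phi),\Delta\phi)$. This identity holds rigorously at the Galerkin level and passes to strong solutions. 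From the strong $L^2(t-2,t;H^3)$ convergence, along a further subsequence I can pick $s_*\in[t-1,t)$ with $\phi_n(s_*)\to\phi(s_*)$ strongly in $H^3$, hence $\mathcal{G}(\phi_n(s_*))\to \mathcal{G}(\phi(s_*))$. Integrating the identity from $s_*$ to $t$ and taking $\limsup_n$, the dissipation contributes $-\liminf_n\int_{s_*}^t\|\partial_r\phi_n\|^2\,dr \le -\int_{s_*}^t\|\partial_r\phi\|^2\,dr$ by weak lower semicontinuity, while the remaining three integrals pass (see the next paragraph) to their counterparts for the limit $\phi$. Comparison with the energy identity for $\phi$ yields $\limsup_n\mathcal{G}(\phi_n(t))\le \mathcal{G}(\phi(t))$, which combined with the trivial bound $\mathcal{G}(\phi(t))\le\liminf_n\mathcal{G}(\phi_n(t))$ (from weak convergence of $\Delta\phi_n(t)$ plus strong $L^2$ convergence of $f'(\phi_n(t))$) gives $\mathcal{G}(\phi_n(t))\to \mathcal{G}(\phi(t))$, hence $\|\Delta\phi_n(t)\|\to\|\Delta\phi(t)\|$ and strong $H^2$ convergence.

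The main obstacle is the passage to the limit in the two cross terms $(\ddiv(\ub_n\phi_n),\partial_r\phi_n)$ and $(f''(\phi_n)\partial_r\phi_n,\Delta\phi_n)$, each pairing the only-weakly convergent $\partial_r\phi_n\in L^2(L^2)$ against another factor that is a priori only weakly convergent; each pair must be rearranged so that the partner factor is strongly convergent. The second is the easier one: the strong $L^2(t-2,t;H^3)$ convergence of $\phi_n$ directly yields $\Delta\phi_n\to \Delta\phi$ strongly in $L^2(L^2)$, and $f''(\phi_n)=3\phi_n^2-1$ is uniformly bounded and converges uniformly on $[t-2,t]\times\bar\Omega$ by strong $C(H^s)$ convergence. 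For the first, I would write $\ddiv(\ub_n\phi_n)=\ub_n\cdot\nabla\phi_n + S\phi_n$; strong $L^2(t-2,t;\mathbf{L}^2)$ convergence of $\ub_n$ follows from the Poisson representation $\nabla p_n = \nabla A^{-1}(S-\ddiv(\mu_n\nabla\phi_n))$ combined with the strong $L^2(\mathbf{L}^2)$ convergence of the product $\mu_n\nabla\phi_n$, which in turn follows from weak $L^2(H^2)$ convergence of $\mu_n$ against strong $C(\mathbf{L}^p)$ convergence of $\nabla\phi_n$ for any finite $p$; the bound $\nabla\phi_n\in L^2(L^\infty)$ via the 2D embedding $H^3\hookrightarrow W^{1,\infty}$ then secures the required strong convergence of $\ub_n\cdot\nabla\phi_n$ in $L^2(L^2)$. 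With these strong convergences the energy method closes and Proposition~\ref{compact} follows.
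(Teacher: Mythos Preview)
Your argument is correct and reaches the goal, but it follows a genuinely different route from the paper's. The paper tests \eqref{1} with $\Delta^2\phi$ (not $\partial_r\phi$) to obtain a differential \emph{inequality} of the form
\[
\frac{d}{dr}\|\Delta\phi_n\|^2+\|\Delta^2\phi_n\|^2\le C_\Omega\big(F_1(\phi_n)+F_2(\phi_n)+F_3(\phi_n)\big),
\]
with the $F_i$ depending only on $\|\phi_n\|_{H^1}$, $\|\Delta\phi_n\|$ and $\|S\|$. It then introduces the nonincreasing functions $J_n(s)=\|\Delta\phi_n(s)\|^2-C_\Omega\int_{t-2}^s(F_1+F_2+F_3)\,d\xi$ and, by a contradiction argument along sequences $t_n\to t^*\in[t-1,t]$, proves the stronger statement $\phi_n\to\phi$ in $C([t-1,t];H^2)$. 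Your approach instead tests with $\partial_r\phi$, uses the exact identity for $\mathcal{G}(\phi)=\tfrac12\|\Delta\phi\|^2-(f'(\phi),\Delta\phi)$, exploits weak lower semicontinuity of $\int\|\partial_r\phi_n\|^2$, and concludes convergence only at the single time $t$ (which suffices). The trade-off is that the paper's inequality route avoids having to pass to the limit in the delicate cross terms $(\ddiv(\ub_n\phi_n),\partial_r\phi_n)$ and needs only a.e.\ convergence of lower-order quantities, whereas your identity route is conceptually more direct but requires establishing strong $L^2_tL^2_x$ convergence of $\ub_n\cdot\nabla\phi_n$.

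One point of exposition to tighten: your justification that $\mu_n\nabla\phi_n\to\mu\nabla\phi$ strongly in $L^2(\mathbf{L}^2)$ is phrased as ``weak $L^2(H^2)$ convergence of $\mu_n$ against strong $C(\mathbf{L}^p)$ convergence of $\nabla\phi_n$'', but weak--strong pairing alone does not give strong convergence of a product. The correct reason is that $\mu_n=-\Delta\phi_n+f'(\phi_n)$ actually converges \emph{strongly} in $L^2(t-2,t;H^1)$, thanks to the strong $L^2(H^3)$ convergence of $\phi_n$ and the strong $C(H^s)$ convergence for $s<2$; from this the product converges strongly and the rest of your chain ($\nabla p_n\to\nabla p$ via the Poisson representation, hence $\ub_n\to\ub$ in $L^2(\mathbf{L}^2)$, then $\ub_n\to\ub$ in $L^2(\mathbf{L}^4)$ by interpolation with the $L^2(\mathbf{H}^1)$ bound) goes through.
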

\begin{proof}
Consider $t\in\mathbb{R}$, a family $\hat{D}\in\mathcal{D}_F^{\mathcal{H}_M}$, a sequence of time $\tau_n\to -\infty$ and
a sequence of initial data $\phi_{\tau_n}\in D(\tau_n)\in \hat{D}$ (recall from the definition that here the set $D(t)$ is indeed time independent). For the sake of simplicity, below we just denote $$\phi^n(s)=\phi(s; \tau_n, \phi_{\tau_n})=U(s, \tau_n)\phi_{\tau_n}.$$

It follows from Proposition \ref{plab} and Corollary \ref{hies1ab} that there exists  a $\tau_0(\hat D, t)<t-3$ such that the subsequence $\{\phi^n:\ \tau_n\leq \tau_0(\hat D, t)\}\subset\{\phi^n\}$ is uniformly bounded in $L^\infty(t-2, t; H^2(\Omega)\cap L^2(t-2, t; H^4(\Omega))$ and correspondingly, $\{\phi_t^n\}$ is uniformly bounded in $L^2(t-2, t; L^2(\Omega))$.

Recall the following compactness lemma (see e.g., \cite{SI}),
\begin{lemma}
Let $X\subset Y\subset Z$ be three Hilbert spaces, $T\in(0,+\infty)$. Suppose that the embedding $X\hookrightarrow Y$ is compact. Then
\par
(1) For any $p, q\in (1, +\infty)$, the embedding $\{\phi\in L^{p}(0, T; X), \ \phi_t\in L^{q}(0, T; Z)\}\hookrightarrow L^{p}(0, T; Y)$ is compact.
\par (2) For any $q\in (1, +\infty)$, the embedding $\{\phi\in L^{\infty}(0, T; X), \ \phi_t\in L^{q}(0, T; Z)\}\hookrightarrow C([0, T]; Y)$ is compact.
\par
(3) The embedding $\{\phi\in L^{2}(0, T; X), \ \phi_t\in L^{2}(0, T; Y)\}\hookrightarrow C([0, T]; [X,Y]_{\frac12})$ is continuous.
\end{lemma}
We deduce that there exists a subsequence still denoted by $\{\phi^n\}$ and a function $\phi\in L^\infty([t-2, t]; H^2(\Omega)\cap L^2(t-2, t; H^4(\Omega))$ with $\phi_t\in L^2(t-2, t; L^2(\Omega))$ such that
\bea
&&\phi^n\rightharpoonup \phi, \quad \text{weakly star in} \quad L^\infty(t-2, t; H^2(\Omega)),\non\\
&&\phi^n\rightharpoonup \phi, \quad \text{weakly in} \quad L^2(t-2, t; H^4(\Omega)),\non\\
&&\phi^n_t\rightharpoonup \phi_t, \!\!\quad \text{weakly in}\quad  L^2(t-2, t; L^2(\Omega)),\non\\
&&\phi^n\to \phi, \quad \text{strongly in}\quad  L^2(t-2, t; H^2(\Omega))\ \text{and}\ C([t-2, t], H^1(\Omega)),\label{strongh1}\\
&&\phi^n(s)\to \phi(s), \quad \text{strongly in}\quad  H^2(\Omega), \ \text{for a.e.} \ s\in(t-2, t).\label{strongae}
\eea
Moreover, we have $\phi\in C([t-2, t], H^2(\Omega))$ and it satisfies the system \eqref{1}--\eqref{3} a.e. on $(t-2, t)$.

  From the fact that $\{\phi^n\}$ is uniformly bounded in $C([t-2, t], H^2(\Omega))$, we infer that for any sequence $\{s_n\}\subset[t-2,t]$ satisfying $s_n\to s_*\in [t-2,t]$, it holds (up to a subsequence)
\beq
\phi^n(s_n)\rightharpoonup \phi(s_*)\quad \text{weakly in} \ H^2(\Omega).\label{weakcon}
\eeq

In what follows, we prove that the sequence $\{\phi^n(t)\}$ is relatively compact in $\mathcal{H}$ (see Definition \ref{comp}), which is a direct consequence of the following result such that up to a subsequence, it holds
\beq
\phi^n\to \phi\quad \text{strongly in} \ C([t-1,t]; H^2(\Omega)).\label{claim}
\eeq

  To proceed, first we need to derive proper energy estimates. For every $\phi^n$, recalling \eqref{ap3} and the computations in \eqref{i1}--\eqref{i31ba}, using the interpolation inequality $\|\nabla \Delta\phi^n\|^2\leq \|\Delta\phi^n\|\|\Delta^2\phi^n\|$ and Young's inequality, after a straightforward but tedious calculation, we can re-estimate the three terms on the right-hand side of \eqref{ap3} (now in terms of $\phi^n$, cf. \eqref{i1}--\eqref{i31ba}) and deduce that
\beq\begin{split}\label{ap4c}
\frac{d}{dt}\|\Delta\phi^n\|^2+\|\Delta^2\phi^n\|^2\leq C_\Omega(F_1(\phi^n)+F_2(\phi^n)+F_3(\phi^n)),
\end{split}
\eeq
where $C_\Omega$ is a constant that  depends only on $\Omega$. In particular, it is independent of $\phi^n$. The functions $F_i$ are given by
\bea
F_1(\phi^n)& = & \|\phi^n\|_{H^1}^4\|\Delta\phi^n\|^6,\non\\
F_2(\phi^n)&=& (\|\phi^n\|_{H^1}^{16}+\|S\|^2+1)\|\Delta\phi^n\|^2,\non\\
F_3(\phi^n)&=& \|\phi^n\|_{H^1}^{10}+\|S\|^2\|\phi^n\|_{H^1}^2+1.\non
\eea
In a similar manner, we have for $\phi$
\beq\begin{split}\label{ap4d}
\frac{d}{dt}\|\Delta\phi\|^2+\|\Delta^2\phi\|^2\leq C_\Omega(F_1(\phi)+F_2(\phi)+F_3(\phi)),
\end{split}
\eeq
where $C_\Omega$ is the same as in \eqref{ap4c}.

As a consequence, for $\phi^n$ and $\phi$, $t-2\leq s_1\leq s_2\leq t$, we infer from the above inequalities that
\bea
&& \|\Delta\phi^n(s_2)\|^2+\int_{s_1}^{s_2}\|\Delta^2\phi^n(\xi)\|^2d\xi\non\\
&\leq& \|\Delta\phi^n(s_1)\|^2+C_\Omega\int_{s_1}^{s_2}(F_1(\phi^n(\xi))+F_2(\phi^n(\xi))+F_3(\phi^n(\xi))) d\xi,\label{ap4e}\\
&& \|\Delta\phi(s_2)\|^2+\int_{s_1}^{s_2}\|\Delta^2\phi(\xi)\|^2d\xi\non\\
&\leq& \|\Delta\phi(s_1)\|^2+C_\Omega\int_{s_1}^{s_2}(F_1(\phi(\xi))+F_2(\phi(\xi))+F_3(\phi(\xi))) d\xi.
\eea
Define
\bea
J_n(s)&=&\|\Delta\phi^n(s)\|^2-C_\Omega\int_{t-2}^s(F_1(\phi^n(\xi))+F_2(\phi^n(\xi))+F_3(\phi^n(\xi))) d\xi,\non\\
J(s)&=&\|\Delta\phi(s)\|^2-C_\Omega\int_{t-2}^s(F_1(\phi(\xi))+F_2(\phi(\xi))+F_3(\phi(\xi))) d\xi.\non
\eea
Since $\phi^n, \phi\in C([t-2,t]; H^2(\Omega))$, the functions $J_n(s)$ and $J(s)$ are continuous for $s\in [t-2, t]$. Moreover, they are non-increasing with respect to $s\in [t-2, t]$. To this end,  we infer from \eqref{ap4e} that
\bea
&&J_n(s_2)-J_n(s_1)\non\\
&=& \|\Delta\phi^n(s_2)\|^2-\|\Delta\phi^n(s_1)\|^2-C_\Omega\int_{s_1}^{s_2}(F_1(\phi^n(\xi))+F_2(\phi^n(\xi))+F_3(\phi^n(\xi))) d\xi\non\\
&\leq& -\int_{s_1}^{s_2}\|\Delta^2\phi^n(\xi)\|^2d\xi\non\\
&\leq&0, \quad \text{for all}\ t-2\leq s_1\leq s_2\leq t.\non
\eea
Similar result holds for $J(s)$. From the strong convergence results \eqref{strongh1} and \eqref{strongae}, we have for a.e. $s\in (t-2, t)$, $\|\Delta \phi^n(s)\|\to \|\Delta \phi(s)\|$ and $\|\phi^n(s)\|_{H^1}\to \|\phi(s)\|_{H^1}$. As a consequence,
\beq
F_i(\phi^n(s))\to F(\phi(s)),\quad \text{a.e. for}\ s\in (t-2, t), \quad i=1,2,3.
\eeq
Since $\phi^n$ is uniformly bounded in $L^\infty(t-2, t; H^2(\Omega)$, then $F_i(\phi^n)$ is also bounded $L^\infty(t-2, t)$. It follows from the Lebesgue dominated convergence theorem that
\beq
\int_{t-2}^s F_i(\phi^n(\xi))d\xi\to \int_{t-2}^s F_i(\phi(\xi))d\xi, \quad \forall\,s\in [t-2,t], \ i=1,2,3,\label{conF}
\eeq
which implies
\beq
J_n(s)\to J(s), \quad \text{a.e.} \ s\in(t-2, t).\label{aaecon}
\eeq

Now we proceed to prove the strong convergence property \eqref{claim} by a contradiction argument introduced in \cite{GMR12, MPR09}. Assume that \eqref{claim} is not true, then there exists a constant $\kappa>0$ and a sequence $\{t_n\}_{n=1}^\infty\subset [t-1,t]$ that without loss of generality, converges to a certain point $t^*\in [t-1,t]$ (otherwise, we can take a convergent subsequence) such that
\beq \|\phi^n(t_n)-\phi(t^*)\|_{H^2}\geq 2\kappa.\non\eeq
From the elliptic estimate, here we can simply use the equivalent norm on $H^2(\Omega)$ given by $\|\cdot\|_{H^2}=\|\cdot\|_{H^1}+\|\Delta\cdot\|$. Then it follows from \eqref{strongh1} that there exists $n_0\in \mathbb{N}$ depending on $\kappa$ such that
\beq
\|\Delta \phi^n(t_n)-\Delta \phi(t^*)\|\geq \kappa,\quad \forall\, n\geq n_0.\label{contr1}
\eeq
On the other hand, from \eqref{aaecon}, we can take a monotone increasing sequence $\{r_j\}\subset(t-2, t^*)$ that satisfies
\beq
\lim_{j\to +\infty} r_j=t^*\quad \text{and}\ \lim_{n\to+\infty} J_n(r_j)=J(r_j), \quad \forall\, j\in \mathbb{N}.\label{JJ}
\eeq
For any $\delta>0$, it follows from the continuity of $J(s)$ that there exists a constant $j_0\in\mathbb{N}$ depending on $\delta$ such that \beq
|J(r_j)-J(t^*)|<\frac{\delta}{2},\quad \forall\,j\geq j_0(\delta).\label{JJ1}
\eeq
Due to \eqref{JJ}, for $j_0$, there exists an integer $n_1$ depending on $j_0$ and satisfying $n_1\geq n_0$ such that
\beq
t_n\geq r_{j_0}, \quad \text{and}\quad |J_n(r_{j_0})-J(r_{j_0})|<\frac{\delta}{2}, \quad \forall\, n\geq n_1.\label{JJ2}
\eeq
Since $J_n(s)$ is non-increasing for $s\in [t-2,t]$, we infer from \eqref{JJ1} and \eqref{JJ2} that for all $n\geq n_1$, it holds
\beq
J_n(t_n)-J(t^*)\leq J_n(r_{j_0})-J(t^*)\leq |J_n(r_{j_0})-J(r_{j_0})|+|J(r_{j_0})-J(t^*)|<\delta,
\eeq
which implies
\beq
\limsup_{n\to+\infty}J_n(t_n)\leq J(t^*).\label{supJ}
\eeq
It follows from \eqref{conF} and the boundedness of $F_i$ that
\bea
&& \lim_{n\to+\infty}\left|\int_{t-2}^{t_n} F_i(\phi^n(\xi))d\xi-\int_{t-2}^{t^*} F_i(\phi(\xi))d\xi\right|\non\\
&\leq&\lim_{n\to+\infty}\left|\int_{t-2}^{t^*} F_i(\phi^n(\xi))d\xi-\int_{t-2}^{t^*} F_i(\phi(\xi))d\xi\right|+\lim_{n\to+\infty}\left|\int_{t^*}^{t_n} F_i(\phi^n(\xi))d\xi\right|\non\\
&=&0, \quad i=1,2,3.\label{limFi}
\eea
Then from the definition of $J_n$, $J$, and \eqref{supJ}--\eqref{limFi}, we can see that
\beq
\limsup_{n\to+\infty}\|\Delta \phi^n(t_n)\|\leq \|\Delta \phi(t^*)\|.\label{supphi}
\eeq
On the other hand, the weak convergence \eqref{weakcon} implies that
\beq
\liminf_{n\to+\infty}\|\Delta \phi^n(t_n)\|\geq \|\Delta \phi(t^*)\|.\label{supphi1}
\eeq
As a consequence, we have the norm convergence
\beq
\lim_{n\to+\infty}\|\Delta \phi^n(t_n)\|= \|\Delta \phi(t^*)\|,\label{supphi2}
\eeq
which together with the weak convergence \eqref{weakcon} yields the strong convergence such that
\beq
\lim_{n\to+\infty}\|\Delta \phi^n(t_n)-\Delta \phi(t^*)\|=0.\label{supphi3}
\eeq
This leads to a contradiction with our assumption \eqref{contr1}. Therefore, \eqref{claim} holds and the sequence $\{\phi^n(t)\}$ is relatively compact in $\mathcal{H}$. The proof is complete.
\end{proof}

\subsection{Proof of Theorem \ref{T2}}

For any $S\in L^2_b(\mathbb{R}; \dot L^2(\Omega))$, we know from Proposition \ref{process} that the global strong solution $\phi$ to problem \eqref{1}--\eqref{ini} defines a closed process $\{U(t,\tau)\}_{t\geq\tau}$ in the phase space $\mathcal{H}_M$. Observing Propositions \ref{plab} and \ref{compact}, also noticing that the pullback $\mathcal{D}_F^{\mathcal{H}_M}$-absorbing family $\hat D_0$ constructed in Proposition \ref{plab} indeed belongs to the universe $\mathcal{D}_F^{\mathcal{H}_M}$, then we are able to apply the abstract results in Lemma \ref{34} and Remark \ref{pullre} to conclude that the process $\{U(t,\tau)\}_{t\geq\tau}$ admits a minimal pullback $\mathcal{D}_F^{\mathcal{H}_M}$-attractor
$\mathcal{A}_{\mathcal{D}_F^{\mathcal{H}_M}}=\{A_{\mathcal{D}_F^{\mathcal{H}_M}}(t): t\in\mathbb{R}\}$ in $\mathcal{H}_M$, which is given by
$$
A_{\mathcal{D}_F^{\mathcal{H}_M}}(t)=\Lambda(\hat D_0, t)= \bigcap_{s\leq t}\overline{\bigcup_{\tau\leq s}U(t,\tau)D_0(\tau)}^{H^2(\Omega)}.
$$
The proof of Theorem \ref{T2} is complete.

\begin{remark} We remark that in the current particular case under consideration, i.e., $\hat D$ is parameterized in time but constant for all $t\in \mathbb{R}$, the corresponding minimal pullback $\mathcal{D}_F^{\mathcal{H}_M}$-attractor for the process $\{U(t, \tau)\}_{t\geq \tau}$ is just the pullback attractor defined in \cite{CDF97}. One can also apply the abstract results in \cite{GMR12} to treat more general case that the family $\hat D$ is time dependent, under suitable assumptions on its element $D$ and the external source term $S$ . We leave this to the interested reader.
\end{remark}

\section{Convergence to Steady States in 2D}

In this section, we investigate the long-time behavior of a single trajectory $\phi(t)$ when the associated dynamical process becomes \emph{asymptotically autonomous} as time goes to infinity.

\subsection{Uniform-in-time estimates}
Hereafter, we assume that the external source term $S$ satisfies
\beq
S\in L^2(\tau,+\infty;\dot L^2(\Omega)).\label{CS1}
\eeq
We recall the inequality \eqref{est1a} which implies that
\beq
\frac{d}{dt} E_0(\phi_n)+\frac12\|\nabla \mu_n\|^2+\|\ub_n\|^2\leq C\|S\|^2E_0(\phi_n),
\eeq
\beq
 E_0(\phi_n(t))\leq E_0(\phi_\tau)e^{\int_\tau^t\|S\|^2ds},\quad \forall\, t\geq \tau,\non
\eeq
 The above estimate easily yields the following uniform-in-time estimates for  global weak (or strong) solutions to problem \eqref{1}--\eqref{ini} such that
\beq
\sup_{t\in[\tau, +\infty)}\|\phi(t)\|_{H^1}^2+\int_{\tau}^{+\infty}\|\nabla
\mu\|^2dt+\int_{\tau}^{+\infty}\|\ub\|^2dt
\leq C,\label{unih1a}
\eeq
and
\beq
\sup_{t\geq \tau}\int_{t}^{t+1}\|\phi\|_{H^3}^2ds\leq C,\label{unih3}
\eeq
where the constant $C$  depends only on $\|\phi_\tau\|_{H^1}$, $\int_\tau^{+\infty}\|S\|^2ds$ and $\Omega$.

Next, recalling the differential inequality \eqref{ap4}, by the uniform Gronwall inequality \cite[Lemma III.1.1]{TE}, we can deduce that
\beq \label{est2q}
\|\Delta\phi(t+1)\|^2\leq C,\quad \forall\ t\geq \tau,
\eeq
where the constant $C$ depends on $\|\phi_\tau\|_{H^1}$, $\Omega$ and $\int_\tau^{+\infty}\|S\|^2ds$. If in addition, $\phi_\tau \in H^2(\Omega)$, then by the classical Gronwall inequality, we have
\beq
\|\Delta\phi(t)\|^2 \leq (\|\Delta\phi_\tau\|^2+1)e^{C\int_\tau^{\tau+1} h(s)ds}\leq C,\quad \forall\, t\in[\tau, \tau+1]. \label{est2qa}
\eeq
The above uniform-in-time estimates  \eqref{est2q}--\eqref{est2qa} imply that

\begin{proposition}\label{regg}
Assume that $S\in L^2(\tau,+\infty;\dot L^2(\Omega))$. Then the global strong solution to problem \eqref{1}--\eqref{ini} is uniformly bounded in $H^2$ for all $t\geq \tau$. Moreover, the global weak solution to problem \eqref{1}--\eqref{ini} will become a strong one after a positive time and it is also uniformly bounded in $H^2$.
\end{proposition}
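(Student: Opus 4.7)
The plan is to assemble the global-in-time bounds just derived and add a short weak-strong matching argument. For the strong-solution part with $\phi_\tau\in H^2_N(\Omega)$, the uniform Gronwall estimate \eqref{est2q} already gives $\|\Delta\phi(t)\|^2\leq C$ for all $t\geq\tau+1$, while the classical Gronwall estimate \eqref{est2qa} furnishes the same bound on $[\tau,\tau+1]$. Combined with the uniform $H^1$-bound \eqref{unih1a} and the mass conservation \eqref{mm} (to control $\overline\phi$ and recover the full $H^2$-norm by elliptic regularity from $\Delta\phi$), this yields $\sup_{t\geq\tau}\|\phi(t)\|_{H^2}\leq C$, with $C$ depending only on $\|\phi_\tau\|_{H^2}$, $\int_\tau^{+\infty}\|S\|^2\,ds$ and $\Omega$.

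For the weak-solution part with $\phi_\tau\in H^1(\Omega)$, I would exploit the extra integrability contained in \eqref{unih3}. Since $\int_\tau^{\tau+1}\|\phi(s)\|_{H^3}^2\,ds\leq C$, the mean-value property produces a time $t_0\in(\tau,\tau+1)$ at which $\phi(t_0)\in H^3(\Omega)\hookrightarrow H^2_N(\Omega)$, with its $H^2$-norm controlled by $\|\phi_\tau\|_{H^1}$ and $\int_\tau^{+\infty}\|S\|^2\,ds$. Restarting the problem at $t_0$ with data $\phi(t_0)\in H^2_N(\Omega)$, Theorem \ref{Th2} provides a unique global strong solution $\widetilde\phi$ on $[t_0,+\infty)$. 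Once $\phi\equiv\widetilde\phi$ on $[t_0,+\infty)$ is established, the uniform $H^2$-bound follows by applying the strong-solution part of the proposition to $\widetilde\phi$ with initial time $t_0$.

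The main obstacle is the weak-strong coincidence on $[t_0,+\infty)$, since Theorem \ref{Th1} does not assert uniqueness of weak solutions and Lemma \ref{pconti} is stated for two strong solutions. I would handle this by reproducing the computation in Lemma \ref{pconti} with the pair $(\phi,\widetilde\phi)$ on any interval $[t_0,T]$: setting $\psi=\phi-\widetilde\phi$ and testing the difference equations by $\psi$ and $\mu_\psi$, one estimates the bad nonlinear contributions by placing the $L^\infty$, $H^3$ or $H^4$ norms on $\widetilde\phi$ (where \eqref{h2es} guarantees $\widetilde\phi\in L^\infty(t_0,T;H^2)\cap L^2(t_0,T;H^4)$) while only $H^1$- and $L^4$-norms are asked of $\phi$, which is available from the weak-solution regularity $C_w([\tau,T];H^1)\cap L^2(\tau,T;H^3)$. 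All resulting coefficients in Gronwall's inequality are integrable on $[t_0,T]$ and the initial difference vanishes, so $\psi\equiv 0$, completing the proof.
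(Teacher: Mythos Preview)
Your treatment of the strong-solution part is exactly what the paper does: combine \eqref{est2q} on $[\tau+1,\infty)$ with \eqref{est2qa} on $[\tau,\tau+1]$ and invoke \eqref{unih1a}.

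For the weak-solution part you take a genuinely different route from the paper. The paper's argument is entirely contained in the observation that the constant in \eqref{est2q} depends only on $\|\phi_\tau\|_{H^1}$ (and $\int_\tau^{+\infty}\|S\|^2\,ds$, $\Omega$). Since \eqref{est2q} is obtained by applying the uniform Gronwall lemma to the differential inequality \eqref{ap4}, which is valid at the Galerkin level uniformly in $n$, the weak solution constructed in Theorem~\ref{Th1} inherits the bound $\|\Delta\phi(t)\|^2\le C$ for $t\ge\tau+1$ directly by passing to the limit; the remaining strong-solution regularity on $[\tau+\delta,T]$ follows by integrating \eqref{ap4} over that interval. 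No restart and no weak--strong matching are needed, because the same Galerkin limit simultaneously furnishes the weak solution and the smoothing estimate. Your approach---extracting a good time $t_0$ from \eqref{unih3}, restarting, and proving a weak--strong uniqueness lemma---is correct in spirit and has the merit of applying to \emph{any} weak solution rather than only the Galerkin-constructed one, but it is considerably longer and introduces a step (the weak--strong energy identity, in particular the chain rule $\langle\phi_t,\mu\rangle=\frac{d}{dt}E(\phi)$ at the weak regularity level) that the paper simply bypasses.
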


\subsection{The $\omega$-limit set}

Since we are interested in the long-time behavior of $\phi$ as $t\to+\infty$,  Proposition \ref{regg} enables us to focus on the study of uniformly bounded global strong solution of problem \eqref{1}--\eqref{ini}.

For any initial datum $\phi_\tau\in H^2_N(\Omega)$. We define the $\omega$-limit set as follows
\beq\nonumber
\omega(\phi_\tau)=\{\phi_{\infty}\in H^2_N(\Omega)\ | \ \exists \{t_n\}\nearrow+\infty\ \text{s.t.}\ \phi(t_n)\rightarrow \phi_{\infty}\ \text{in}\ H^1,\ \text{as}\ t_n\rightarrow+\infty\}.
\eeq
Besides, we introduce the set of steady states associated with the initial datum
\beq\mathcal{S}=\left\{\psi\in H^2_N(\Omega)\; |-\Delta\psi+f'(\psi)=\frac{1}{|\Omega|}\int_{\Omega}f'(\psi) dx, \text{ a.e. in } \Omega, \int_{\Omega}\psi dx=\int_{\Omega}\phi_\tau dx\right\}.\label{SSS}
\eeq
Using the classical variational method and the elliptic regularity theorem, we can easily deduce that (see \cite[Proposition 3.5]{WW2012} for the case with periodic boundary condition)

\begin{proposition}\label{esci}
 The set $\mathcal{S}$ is nonempty. Any element $\psi\in \mathcal{S}$ is a critical point of $E(\phi)$, which satisfies
 $\psi\in C^\infty$ and its $H^m$-norms ($m\geq 0$) are bounded by a constant depending on $|\overline{
\phi_\tau}|$ and $\Omega$.
 \end{proposition}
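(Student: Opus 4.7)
The plan is to establish Proposition \ref{esci} in three stages: nonemptiness of $\mathcal{S}$ via constrained minimization, an a priori $H^1$-bound valid for every element of $\mathcal{S}$ (not merely for minimizers), and an elliptic bootstrap for smoothness together with higher order estimates.

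For nonemptiness, I would consider the variational problem $\inf\{E(\phi):\phi\in V_M\}$ on the affine manifold $V_M=\{\phi\in H^1(\Omega):\overline{\phi}=M\}$ with $M:=\overline{\phi_\tau}$. Since $f(\phi)\geq-\tfrac14$ and the mean is fixed, the Poincar\'e--Wirtinger inequality gives coercivity, so any minimizing sequence $\{\psi_n\}\subset V_M$ is bounded in $H^1(\Omega)$. I would extract $\psi_n\rightharpoonup\psi$ weakly in $H^1$ and strongly in $L^4$ by Rellich--Kondrachov; the mean constraint is preserved and $E$ is weakly lower semicontinuous (convexity of $\|\nabla\cdot\|^2$, strong $L^4$-continuity of $\int f(\phi)\,dx$), so $\psi$ attains the infimum. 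The first variation with the scalar mass constraint then yields the Euler--Lagrange equation
\[
-\Delta\psi+f'(\psi)=\lambda,\qquad \lambda=|\Omega|^{-1}\int_{\Omega}f'(\psi)\,dx,\qquad \partial_\nu\psi=0,
\]
showing $\mathcal{S}\neq\emptyset$ and that every element of $\mathcal{S}$ is a critical point of $E$ restricted to $V_M$.

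For the $H^1$-bound, which must hold for every $\psi\in\mathcal{S}$ and not just the constructed minimizer, I would test the Euler--Lagrange equation with $\psi-M$; the constant $\lambda$ drops out by the mean condition, leaving $\|\nabla\psi\|^2+\int_\Omega(\psi^3-\psi)(\psi-M)\,dx=0$. Expanding the integrand and absorbing $|M\psi^3|\leq\tfrac12\psi^4+CM^4$ and $\psi^2\leq\tfrac14\psi^4+1$ by Young's inequality produces
\[
\|\nabla\psi\|^2+\tfrac12\|\psi\|_{L^4}^4\leq C(|M|,\Omega),
\]
and combining this with $\overline{\psi}=M$ and the Poincar\'e inequality gives $\|\psi\|_{H^1}\leq C(|\overline{\phi_\tau}|,\Omega)$.

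For smoothness and higher Sobolev bounds, I would rewrite the equation as $-\Delta\psi=\lambda-\psi^3+\psi$ with $\partial_\nu\psi=0$, where $|\lambda|$ is already controlled by $\|\psi\|_{H^1}$. In 2D, $H^1\hookrightarrow L^p$ for every $p<\infty$, so the right-hand side lies in every $L^p(\Omega)$ with norm depending only on the bound just obtained. Neumann elliptic regularity then delivers $\psi\in W^{2,p}$ for every $p<\infty$, hence $\psi\in C^{1,\alpha}$; feeding this back into the equation and iterating Schauder/$L^p$ regularity (using that $f'$ is a polynomial) gives $\psi\in C^{k,\alpha}$ for every $k$, so $\psi\in C^\infty$, and each $H^m$-norm is controlled by a constant depending only on $|\overline{\phi_\tau}|$ and $\Omega$. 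The main (mild) obstacle is that $\Omega$ may be a convex polygonal domain, so the first $W^{2,p}$ Neumann step invokes the regularity results of Grisvard/Dauge rather than the standard smooth-boundary theory; once past this initial step, the bootstrap proceeds exactly as in the smooth case and mirrors the argument of \cite[Proposition 3.5]{WW2012}.
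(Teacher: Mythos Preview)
Your proposal is correct and follows precisely the route the paper has in mind: the paper does not spell out a proof but merely invokes ``the classical variational method and the elliptic regularity theorem'' and points to \cite[Proposition~3.5]{WW2012}, which is exactly the constrained minimization plus bootstrap argument you have written out, including the test against $\psi-M$ to get the uniform $H^1$-bound for arbitrary elements of $\mathcal{S}$. Your remark about invoking Grisvard/Dauge for the initial $W^{2,p}$ step on convex polygonal domains is a legitimate refinement that the paper leaves implicit.
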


Using the fact that the strong solution $\phi$ is uniformly bounded in $H^2$ for $t\geq \tau$, similar to the calculations in \eqref{app0}--\eqref{est1a0} for the approximate solution, we can apply Young's inequality to obtain the following energy inequality for $\phi$:
\beq
\frac{d}{dt}E(\phi(t))+\frac{1}{2}\|\nabla\mu\|^2+\|\ub\|^2\leq K_1\|S\|^2,\quad\text{for a.e.}\ t\geq \tau,
\label{BEL}
\eeq
where
\beq
 E(\phi)=\int_\Omega\left( \frac{1}{2}|\nabla\phi|^2+f(\phi)\right)dx\label{E}
  \eeq
and $K_1$ is a constant depending on $\|\phi_\tau\|_{H^2}$, $\int_\tau^{+\infty}\|S\|^2 ds$ and $\Omega$.

The above type of energy inequality plays an important role in studying the long-time behavior of global solutions to non-autonomous system (cf. \cite{HT01,CJ}). First, we can prove the following relationship between the $\omega$-limit set and set $\mathcal{S}$.

\begin{proposition}\label{omega}
For any $\phi_\tau\in H^2_N(\Omega)$, its corresponding $\omega$-limit set is a nonempty bounded subset in $H^2(\Omega)$ such that $\omega(\phi_\tau)\subset \mathcal{S}$. Moreover, $E(\phi)$ is a constant on $\omega(\phi_\tau)$.
\end{proposition}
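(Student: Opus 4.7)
The plan is to exploit the uniform-in-time $H^2$ bound from Proposition \ref{regg}, the energy inequality \eqref{BEL}, and the integrability $S\in L^2(\tau,+\infty;\dot L^2(\Omega))$, which together imply that the trajectory becomes asymptotically autonomous.

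\textbf{Step 1 (Nonemptiness and $H^2$-boundedness of $\omega(\phi_\tau)$).} By Proposition \ref{regg}, $\phi(t)$ is uniformly bounded in $H^2_N(\Omega)$ for $t$ large. Given any $t_n\to+\infty$, the compact embedding $H^2\hookrightarrow H^1$ yields (along a subsequence) weak $H^2$ and strong $H^1$ convergence of $\phi(t_n)$ to some $\phi_\infty$, which lies in $H^2_N(\Omega)$ by weak closedness of the Neumann constraint and of the $H^2$-ball; this gives both nonemptiness and the uniform $H^2$-bound on $\omega(\phi_\tau)$.

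\textbf{Step 2 ($E$ is constant on $\omega(\phi_\tau)$).} Define
\[
\mathcal{E}(t):=E(\phi(t))+K_1\int_t^{+\infty}\|S(s)\|^2\,ds.
\]
By \eqref{BEL}, $\mathcal{E}$ is non-increasing; it is also bounded from below (since $E$ is), hence has a finite limit $E_\infty$ as $t\to+\infty$. Since $\int_t^{+\infty}\|S\|^2\,ds\to 0$, we obtain $E(\phi(t))\to E_\infty$. For any $\phi_\infty\in\omega(\phi_\tau)$ with $\phi(t_n)\to\phi_\infty$ in $H^1$, continuity of $E$ on $H^1\cap L^4$ (in 2D this follows from $H^1\hookrightarrow L^q$ for all $q<\infty$) gives $E(\phi_\infty)=E_\infty$.

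\textbf{Step 3 ($\omega(\phi_\tau)\subset\mathcal{S}$).} Fix $\phi_\infty\in\omega(\phi_\tau)$ with $\phi(t_n)\to\phi_\infty$ in $H^1$. Integrating \eqref{BEL} on $[\tau,+\infty)$ gives
\[
\int_\tau^{+\infty}\|\nabla\mu\|^2\,ds+\int_\tau^{+\infty}\|\ub\|^2\,ds<+\infty,
\]
so the tail integrals on $[t_n,t_n+1]$ of $\|\nabla\mu\|^2$, $\|\ub\|^2$ and $\|S\|^2$ all tend to zero. Consider the shifted trajectories $\phi^n(s,x):=\phi(t_n+s,x)$, $s\in[0,1]$. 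Arguing as in Lemma \ref{regww} together with the uniform $H^2$-bound, $\{\phi^n\}$ is bounded in $L^\infty(0,1;H^2)\cap L^2(0,1;H^4)$ with $\{\partial_s\phi^n\}$ bounded in $L^2(0,1;L^2)$. Aubin--Lions then yields (up to a subsequence) strong convergence $\phi^n\to\tilde\phi$ in $C([0,1];H^{2-\eta})$ for any $\eta>0$, with $\tilde\phi(0)=\phi_\infty$. The tail estimates give $\nabla\mu^n\to 0$, $\ub^n\to 0$, $S^n\to 0$ strongly in $L^2(0,1;L^2)$. Passing to the limit in \eqref{1} (the term $\ddiv(\ub^n\phi^n)\to 0$ by strong convergences in 2D Sobolev embeddings) yields $\partial_s\tilde\phi\equiv 0$, so $\tilde\phi(s)\equiv\phi_\infty$, and $\mu^n\to -\Delta\phi_\infty+f'(\phi_\infty)$ in a suitable weak sense with vanishing spatial gradient. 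Thus $-\Delta\phi_\infty+f'(\phi_\infty)$ is constant a.e.; integrating and using $\partial_\nu\phi_\infty=0$ identifies the constant as $\overline{f'(\phi_\infty)}$. Combined with mass conservation \eqref{mm}, this places $\phi_\infty\in\mathcal{S}$.

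The main obstacle is Step 3, specifically justifying that the shifted chemical potentials and velocities have square-integrable tails that genuinely vanish uniformly on the unit window $[t_n,t_n+1]$, and then passing to the limit in the nonlinear convective term $\ddiv(\ub^n\phi^n)$; the former is a direct consequence of the global-in-time dissipation identity obtained from \eqref{BEL}, and the latter is handled via the strong $C([0,1];H^{2-\eta})$ convergence of $\phi^n$ combined with the strong $L^2(0,1;\mathbf{L}^2)$ convergence $\ub^n\to 0$.
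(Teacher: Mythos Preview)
Your proof is correct. Steps 1 and 2 coincide with the paper's argument almost verbatim (the paper defines the same shifted energy $\tilde E(t)=E(\phi(t))+K_1\int_t^\infty\|S\|^2\,ds$ and uses its monotonicity to obtain $E(\phi(t))\to E_\infty$).

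In Step 3 you take a genuinely different route. The paper avoids the higher-order $L^2(0,1;H^4)$ bounds and Aubin--Lions entirely: it uses only the elementary estimate $\|\phi_t\|_{(H^1)'}\leq K_2(\|\ub\|+\|\nabla\mu\|+\|S\|)$ (valid thanks to the uniform $H^2$-bound and Agmon's inequality) to deduce $\int_0^1\|\phi_t(t_n+s)\|_{(H^1)'}^2\,ds\to 0$, hence $\phi(t_n+s)\to\phi_\infty$ in $H^1$ for every $s\in[0,1]$ by precompactness. It then tests against a fixed $\xi\in H^1$, averages in $s$ over $[0,1]$, and uses $\int_0^1\|\nabla\mu(t_n+s)\|^2\,ds\to 0$ with Poincar\'e's inequality to conclude that $\phi_\infty$ satisfies the weak stationary equation. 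Your approach instead extracts the $L^\infty(0,1;H^2)\cap L^2(0,1;H^4)$ bounds on the shifted trajectories (which are indeed uniform, via \eqref{ap4} and \eqref{unih3}), applies Aubin--Lions to get $C([0,1];H^{2-\eta})$ convergence, and passes to the limit in the full equation. Both work; the paper's argument is lighter, needing only $(H^1)'$-control of $\phi_t$, while yours invests more regularity but delivers a stronger intermediate convergence and makes the identification $\nabla\mu_\infty=0$ immediate.
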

\begin{proof}
Due to the uniform $H^2$-estimate for $\phi$ and the compact embedding $H^2\hookrightarrow H^1$, there exists certain function  $\phi_\infty\in H^2_N(\Omega)$ and a unbounded increasing sequence $t_n\to+\infty$ that $\|\phi(t_n)-\phi_\infty\|_{H^1}\to 0$ as $n\to+\infty$. Hence, $\omega(\phi_\tau)$ is a nonempty, bounded subset in $H^2(\Omega)$.

It follows from \eqref{BEL} that
\beq
E(\phi(t_1))-E(\phi(t_2))\leq K_1\int_{t_2}^{t_1}\|S\|^2dt, \quad \forall\, \tau\leq t_2\leq t_1<+\infty.\label{contiE}
\eeq
Thus, $E(\phi(t))$ is continuous in time (and it is bounded from below from its definition \eqref{E}).

Denote $\tilde{E}(t)=E(\phi(t))+K_1\int_t^{\infty}\|S\|^2ds$. Then it follow from \eqref{BEL} that
\beq\nonumber \frac{d}{dt}\tilde{E}(t)+\frac12\|\nabla\mu\|^2+\|\ub\|^2\leq 0,\quad\text{for  }t\geq \tau.
\eeq Hence, $\tilde{E}(t)$ is non-increasing in $t$. Since $\tilde{E}$ is also bounded from below, we may infer that as $t\rightarrow+\infty$, $\tilde{E}(t)\rightarrow E_{\infty}$ for some constant $E_{\infty}$.  Recalling the fact $\lim_{t\to+\infty}\int_t^{+\infty}\|S\|^2 ds=0$, we get
\beq
\lim_{t\to +\infty} E(\phi(t))=E_\infty.\label{limE}
\eeq
By the definition of $\omega(\phi_\tau)$, it is easy to see that $E(t)$ equals $E_{\infty}$ on $\omega(\phi_\tau)$.

Next, for any cluster point $\phi_\infty \in \omega(\phi_\tau)$, it easily follows that $\phi_\infty\in H^2_N(\Omega)$ and $\overline{\phi_\infty}=\overline{\phi_\tau}$. In order to show that $\phi_\infty\in \mathcal{S}$, we apply the argument introduced in \cite{HT01}. Consider the unbounded increasing sequence $t_n\to+\infty$ such that $\|\phi(t_n)-\phi_\infty\|_{H^1}\to 0$ as $n\to+\infty$. Without loss of generality, we assume $t_{n+1}\geq
t_n+1,\ n\in\mathbb{N}$.  Integrating
\eqref{BEL} on the time interval $[t_n,t_{n+1}]$, we obtain that
 \bea && E(\phi(t_{n+1}))- E(\phi(t_n)+ \int^{t_{n+1}}_{t_n}
 \left(\frac12 \|\nabla \mu(s)\|^2+\|\ub(s)\|^2\right)ds
 \non\\
 &\leq& K_1 \int^{t_{n+1}}_{t_n}
 \|S\|^2ds.\label{Ly5}
 \eea
It follows from \eqref{limE} and \eqref{Ly5} that as $n\rightarrow +\infty$, it holds
 \bea & & \int^{1}_{0}
 \left(\frac12 \|\nabla \mu(t_n+s)\|^2+\|\ub(t_n+s)\|^2\right)ds\non\\
 &\leq & \int^{t_{n+1}}_{t_n}
 \left(\frac12 \|\nabla \mu(s)\|^2+\|\ub(s)\|^2\right)ds \rightarrow 0.
 \label{Ly6}
 \eea
Besides, by equation \eqref{1}, the uniform $H^2$-estimate for $\phi$ and Agmon's inequality, we have (cf. \cite{Ab})
 \bea \|\phi_t\|_{(H^1(\Omega))'}
 &\leq& C(\|\ub\phi\| + \|\nabla \mu\|+\|S\|)\leq
  C(\|{\bf u}\|\|\phi\|_{L^\infty}+\|\nabla\mu\|+\|S\|)\non\\
&\leq& K_2\left(\|\ub\|+ \|\nabla \mu\|+\|S\|\right),\label{dt}
 \eea
where $K_2$ is a constant depending on $\|\phi_\tau\|_{H^2}$, $\int_\tau^{+\infty}\|S\|^2 ds$ and $\Omega$.
By \eqref{dt} and \eqref{Ly6}, we have
 \beq
 \lim_{n\to+\infty}\int^{1}_{0}\|\phi_t(t_n+s)\|_{(H^1(\Omega))'}^2ds=0.
 \eeq
As a consequence,
 \beq \|\phi(t_n+s_1)-\phi(t_n+s_2)\|_{(H^1(\Omega))'}\rightarrow 0,\quad \text{uniformly} \
 \text{for all}\ s_1, s_2\in [0,1].\non
 \eeq
From the precompactness of $\phi(t)$ in $H^1(\Omega)$ and the sequential convergence of $\phi(t_n)$ in $H^1$, we infer that
 \beq \lim_{n\rightarrow
 \infty}\|\phi(t_n+s)-\phi_\infty\|_{H^1}=0,\quad \forall \,
 s\in[0,1].\label{ffff}
 \eeq
For any $\xi\in H^1(\Omega)$, using Lebesgue dominated convergence
theorem, the Poincar\'e inequality, \eqref{Ly6} and \eqref{ffff}, we deduce that
\bea
&&\left|\int_\Omega (\nabla \phi_\infty \cdot \nabla \xi+f'(\phi_\infty)\xi-\overline{f'(\phi_\infty)}\xi)dx\right|\non\\
&=& \lim_{n\to+\infty}\left|\int^{1}_{0} \int_\Omega\left(\nabla \phi(t_n+s)\cdot \nabla \xi+f'(\phi(t_n+s))\xi-\overline{f'(\phi(t_n+s))}\xi\right) dx ds\right|\non\\
&=& \lim_{n\to+\infty}\left|\int^{1}_{0}\int_\Omega (\mu(t_n+s)-\overline{\mu}(t_n+s))\xi dxds \right|\non\\
&\leq & \lim_{n\to+\infty}\int^{1}_{0}
  \|\mu(t_n+s)-\overline{\mu}(t_n+s)\|\|\xi\| ds\non\\
&\leq&
\lim_{n\to+\infty}\left(\int^{1}_{0}
  \|\mu(t_n+s)-\overline{\mu}(t_n+s)\|^2 ds\right)^\frac12 \|\xi\|\non\\
  &\leq& \lim_{n\to+\infty}C\left(\int^{1}_{0}
  \|\nabla \mu(t_n+s)\|^2 ds \right)^\frac12 \|\xi\|\non\\
  &=&0\non
\eea
which enables us to conclude that $\phi_\infty\in \mathcal{S}$. The proof is complete.
\end{proof}
\begin{remark}
Indeed, from \eqref{Ly5}, we can also obtain the decay of velocity $\ub$ in the following weak sense
$$\lim_{t\to+\infty}\int^{1}_{0}  \|\ub(t+s)\|^2ds=0. $$

\end{remark}


\subsection{Convergence of trajectory $\phi(t)$}
The precompactness of the trajectory $\phi(t)$ in $H^1(\Omega)$ only yields a sequential convergence result for $\phi(t)$.
Next, we demonstrate that the $\omega$-limit set $\omega(\phi_\tau)$ consists of a single point, namely, we show that each bounded global strong solution converges to a single steady state as time goes to infinity. For this purpose, we assume in addition that
\beq\label{AS}
\sup\limits_{t\geq\tau}(1+t)^{1+\rho}\int_t^{+\infty}\|S\|^2 ds<+\infty,\quad\text{for some } \rho>0.
\eeq
 First, we introduce the following \L ojasiewicz-Simon type inequality, which easily follows from the abstract result in \cite{GG06}:
\begin{lemma} \label{LS} Let $\psi\in H^2_N(\Omega)$ be a critical point of $E(\phi)$. Then there exist constants $\theta\in(0,\frac{1}{2})$ and $\beta>0$ depending on $\psi$ such that for any $\phi\in H^2_N(\Omega)$ satisfying $\int_{\Omega}\phi dx=\int_{\Omega}\psi dx$ and $\|\phi-\psi\|_{H^1}\leq \beta$, it holds that
\beq\label{ls}
\|\mathrm{P}_0(-\Delta\phi+f'(\phi))\|\geq |E(\phi)-E(\psi)|^{1-\theta}.
\eeq
\end{lemma}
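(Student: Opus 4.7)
The plan is to reduce the inequality to the abstract \L ojasiewicz--Simon theorem of \cite{GG06}. Because of the mass constraint $\int_\Omega \phi\,dx=\int_\Omega\psi\,dx$, it is natural to work on the affine space $\psi+\dot H^1(\Omega)$, identified after translation with the Hilbert space $V:=\dot H^1(\Omega)$, whose dual can be identified with $\dot H^{-1}(\Omega)$. I would introduce the shifted functional $\tilde E(v):=E(\psi+v)$ for $v\in V$ and verify that $\tilde E$ satisfies the hypotheses of the abstract result (analyticity, Fr\'echet differentiability into $V'$, and the Fredholm property of the second derivative at the critical point $v=0$).

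\textbf{Analyticity and first derivative.} Since $f(\phi)=\tfrac14\phi^4-\tfrac12\phi^2$ is a polynomial and $H^1(\Omega)\hookrightarrow L^p(\Omega)$ for every $p<\infty$ in $d=2$, the map $v\mapsto\int_\Omega f(\psi+v)\,dx$ is analytic on $V$; the quadratic gradient term is trivially analytic. A straightforward computation gives
\[
\langle \tilde E'(v),w\rangle=\int_\Omega\bigl(\nabla(\psi+v)\cdot\nabla w+f'(\psi+v)w\bigr)dx\qquad\forall\,w\in V,
\]
and, because $w$ has zero mean, the right-hand side equals $\bigl(\mathrm{P}_0(-\Delta(\psi+v)+f'(\psi+v)),w\bigr)_{L^2}$. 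In particular $\tilde E'(0)=0$ since $\psi$ is a critical point of $E$ under the mass constraint, and the $V'$-norm of $\tilde E'(v)$ is controlled from below by $\|\mathrm{P}_0(-\Delta\phi+f'(\phi))\|_{\dot H^{-1}}$, which by the Poincar\'e inequality is in turn dominated by the $L^2$-norm appearing in \eqref{ls}.

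\textbf{Fredholm property of $\tilde E''(0)$.} The second derivative is the bounded symmetric form on $V$ given by
\[
\langle \tilde E''(0)v,w\rangle=\int_\Omega\bigl(\nabla v\cdot\nabla w+f''(\psi)vw\bigr)dx,
\]
so as an operator $V\to V'$ it corresponds, via the Riesz isomorphism $-\Delta_N:V\to V'$ on $\dot H^1$, to $I+K$ where $K$ is the operator associated with the bilinear form $(v,w)\mapsto\int_\Omega f''(\psi)vw\,dx=\int_\Omega(3\psi^2-1)vw\,dx$. Since $\psi\in H^2_N(\Omega)\hookrightarrow L^\infty(\Omega)$ (in $d=2$) and the embedding $\dot H^1\hookrightarrow L^2$ is compact, $K$ is compact and self-adjoint, hence $\tilde E''(0)$ is self-adjoint Fredholm of index zero.

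\textbf{Conclusion via the abstract theorem.} With these ingredients---real analyticity of $\tilde E$, the form of $\tilde E'$, and the Fredholm property of $\tilde E''(0)$---the abstract \L ojasiewicz--Simon inequality of \cite{GG06} yields constants $\theta\in(0,\tfrac12)$ and $\beta_0>0$ (both depending on $\psi$) such that
\[
\|\tilde E'(v)\|_{V'}\geq |\tilde E(v)-\tilde E(0)|^{1-\theta}\qquad\text{whenever }\|v\|_V\leq \beta_0.
\]
Translating back to $\phi=\psi+v$, using $\|\tilde E'(v)\|_{V'}\le C\|\mathrm{P}_0(-\Delta\phi+f'(\phi))\|_{L^2}$, absorbing the constant into $\theta$ via $a^{1-\theta'}\leq Ca^{1-\theta}$ on a neighborhood, and shrinking $\beta_0$ to $\beta$, gives \eqref{ls}.

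\textbf{Main obstacle.} The delicate point is handling the constraint $\int_\Omega\phi\,dx=\int_\Omega\psi\,dx$ cleanly, so that the gradient in the constrained space really is $\mathrm{P}_0(-\Delta\phi+f'(\phi))$ rather than $-\Delta\phi+f'(\phi)$, and so that the Fredholm theory is applied on mean-zero spaces; once this is done correctly, the abstract machinery of \cite{GG06} does the rest.
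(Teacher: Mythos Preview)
Your proposal is correct and follows essentially the same route as the paper: the paper does not give a detailed proof but merely states that the inequality ``easily follows from the abstract result in \cite{GG06}'', and what you have done is precisely to verify the hypotheses (analyticity, gradient structure on the mass-constrained space, Fredholm second variation) needed to invoke that abstract \L ojasiewicz--Simon theorem. Your handling of the mass constraint via the zero-mean space $\dot H^1$ and the absorption of the constant into the exponent are the standard steps one expects here.
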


 The proof for convergence of the whole trajectory $\phi(t)$ follows from the so-called \L ojasiewicz-Simon approach (see e.g., \cite{CJ, FS, HT01, GG10, ZWH}). By Lemma \ref{LS}, for each element $\phi_\infty\in\omega(\phi_\tau)$, there exists a $\beta_{\phi_\infty}>0$ and $\theta_{\phi_\infty}\in(0,\frac{1}{2})$ such that
the inequality \eqref{ls} holds for
\beq \phi\in \textbf{B}_{\beta_{\phi_\infty}}(\phi_\infty):=\Big\{\phi\in H^2_N(\Omega):\int_{\Omega}\phi dx=\int_{\Omega}\phi_\tau dx, \  \ \|\phi-\phi_\infty\|_{H^1}<\beta_{\phi_\infty}\Big\}.\non
\eeq
The union of  balls $\{\textbf{B}_{\beta_{\phi_\infty}}(\phi_\infty):\phi_\infty\in\omega(\phi_\tau)\}$ forms an open cover of $\omega(\phi_\tau)$ and because of the compactness of $\omega(\phi_\tau)$ in $H^1$, we can find a finite sub-cover $\{\textbf{B}_{\beta_i}(\phi_\infty^i):i=1,2,...,m\}$ of $\omega(\phi_\tau)$ in $H^1$, where the constants $\beta_i, \theta_i$ corresponding to $\phi_\infty^i$ in Lemma \ref{LS} are indexed by $i$. From the definition of $\omega(\phi_\tau)$, there exists a sufficient large $t_0>\max\{\tau, 0\}$ such that
\beq\nonumber \phi(t)\in\mathcal{U}:=\bigcup_{i=1}^m\textbf{B}_{\beta_i}(\psi_{i}), \quad \text{for}\;\; t\geq t_0. \eeq
Taking $\theta=\min_{i=1}^m\{\theta_i\}\in(0,\frac{1}{2})$, using Lemma \ref{LS} and the convergence of energy \eqref{limE},  we deduce that for all $t\geq t_0$,
\beq\label{ls1}
\|\mathrm{P}_0(-\Delta\phi+f'(\phi))\|\geq |E(\phi(t))-E_{\infty}|^{1-\theta}.
\eeq
It follows from \eqref{BEL} and \eqref{dt} that
\beq
\frac{d}{dt}E(\phi(t))+\frac{1}{4K_2}\|\phi_t\|_{(H^1(\Omega))'}^2+\frac14\|\nabla\mu\|^2+\frac34\|\ub\|^2\leq \left(K_1+\frac14\right)\|S\|^2,\quad\text{for a.e.}\ t\geq \tau.
\label{BEL1}
\eeq
Introduce the auxiliary functions
\beq\nonumber
\mathcal{Y}(t)^2=\frac{1}{4K_2}\|\phi_t\|_{(H^1(\Omega))'}^2+\frac14\|\nabla\mu\|^2+\frac34\|\ub\|^2,\quad  z(t)=\left(K_1+\frac14\right)\int_t^{\infty}\|S\|^2ds.
\eeq
The assumption \eqref{AS} implies that
\beq\nonumber
z(t)\leq C(1+t)^{-(1+\rho)},\quad \forall t\geq t_0.
\eeq
Then the energy inequality \eqref{BEL1} yields that  for $t\geq t_0$,
\bea
 E(\phi(t))-E_{\infty}&\geq& \int_{t}^{\infty}\mathcal{Y}(s)^2ds-z(t)\nonumber\\
&\geq&\int_t^{\infty}\mathcal{Y}(s)^2 ds-C(1+t)^{-(1+\rho)}.\label{eee0}
\eea
Set the exponent
\beq\nonumber
\zeta=\min\left\{\theta,\frac{\rho}{2(1+\rho)}\right\}\in(0,\frac{1}{2}).
\eeq
We infer from \eqref{ls1} and the uniform $H^2$-bound for $\phi$ that
\bea
|E(\phi(t))-E_{\infty}|&\leq&\|\mathrm{P}_0(-\Delta\phi+f'(\phi))\|^{\frac{1}{1-\theta}}\non\\
&\leq&C \|\mathrm{P}_0(-\Delta\phi+f'(\phi))\|^{\frac{1}{1-\zeta}}\non\\
&\leq&C\|\nabla\mu\|^{\frac{1}{1-\zeta}}\leq C\mathcal{Y}(t)^{\frac{1}{1-\zeta}},\quad \forall t\geq t_0.
\label{ee}
\eea
 On the other hand, it is easy to verify that
\beq
\int_t^{\infty}(1+s)^{-2(1+\rho)(1-\zeta)}ds\leq\int_t^{\infty}(1+s)^{-(2+\rho)}ds\leq (1+t)^{-(1+\rho)},\quad \forall t\geq t_0.\label{eee1}
\eeq
Now we denote
\beq\nonumber
Z(t)=\mathcal{Y}(t)+(1+t)^{-(1+\rho)(1-\zeta)}.
\eeq
It follows from \eqref{eee0}--\eqref{eee1} that
\bea\int_t^{\infty}Z(s)^2 ds&\leq& C\mathcal{Y}(t)^{\frac{1}{1-\zeta}}+C(1+t)^{-(1+\rho)}\non\\
&\leq & CZ(t)^{\frac{1}{1-\zeta}},\quad\forall t\geq t_0.\label{z}
\eea
Thanks to the technical Lemma \ref{f}, we conclude from \eqref{z} that
\beq\label{z1}\int_{t_0}^{+\infty}Z(t)dt<+\infty.\eeq
Since $\rho>0$, we also have
\beq\nonumber\int_{t_0}^{+\infty}(1+t)^{-(1+\rho)(1-\zeta)}dt\leq\int_{t_0}^{\infty}(1+t)^{-\frac{2+\rho}{2}}dt
=\frac{2}{\rho}(1+t_0)^{-\frac{\rho}{2}}<+\infty,\quad\text{for  }t_0>0,
\eeq
which together with \eqref{z1} yields
\beq\nonumber\int_{t_0}^{+\infty}\|\phi_t\|_{(H^1(\Omega))'} dt<+\infty.\eeq
As a consequence, $\phi(t)$ converges strongly in $(H^1(\Omega))'$ as $t\rightarrow+\infty$. Together with the compactness of the trajectory in $H^s(\Omega)$, $s\in(0,2)$, we finally obtain that there exists  $\phi_{\infty}\in \mathcal{S}$ such that
\beq\nonumber
\lim_{t\rightarrow+\infty}\|\phi(t)-\phi_{\infty}\|_{H^s}=0
\quad \text{and} \quad \phi(t)\rightharpoonup \phi_\infty\ \text{weakly in}\ H^2(\Omega).
\eeq

Next, we proceed to prove the estimate on convergence rate. Let \beq\nonumber\mathcal{K}(t)=E(t)-E_{\infty}+z(t).\eeq
It follows from \eqref{BEL1} that
\beq
\label{ener1}
\frac{d}{dt}\mathcal{K}(t)+\mathcal{Y}(t)^2\leq 0,\quad\text{for   }t\geq t_0.
\eeq
Thus, $\mathcal{K}(t)$ is decreasing on $[t_0,+\infty)$ and due to \eqref{limE} and \eqref{AS}, $\mathcal{K}(t)\rightarrow0$ as $t\rightarrow+\infty$. Besides, we deduce from
\eqref{AS}, \eqref{ee} that
\bea
\mathcal{K}(t)^{2(1-\theta)}&\leq& C\mathcal{Y}(t)^{2}+C(1+t)^{-2(1-\theta)(1+\rho)}\non\\
&\leq&-C\frac{d}{dt}\mathcal{K}(t)+C(1+t)^{-2(1-\theta)(1+\rho)}.\nonumber
\eea
Then by \cite[Lemma 2.6]{Ha}, we obtain that
\beq\nonumber \mathcal{K}(t)\leq C(1+t)^{-\kappa},\quad \forall t\geq t_0,\eeq
with the exponent given by
\beq\nonumber \kappa=\min\left\{\frac{1}{1-2\theta},1+\rho\right\}.\eeq
We infer from \eqref{ener1} that for any $t\geq t_0,$
\beq\nonumber\int_t^{2t}\mathcal{Y}(s)ds\leq t^{\frac{1}{2}}\left(\int_t^{2t}\mathcal{Y}^2(s)ds\right)^{\frac{1}{2}}\leq Ct^{\frac{1}{2}}\mathcal{K}^{\frac{1}{2}}(t)\leq C(1+t)^{\frac{1-\kappa}{2}}.\eeq
Thus, we have
\beq\nonumber \int_t^{+\infty}\mathcal{Y}(s)ds\leq \sum\limits_{j=0}^{+\infty}\int_{2^jt}^{2^{j+1}t}\mathcal{Y}(s)ds\leq C\sum\limits_{j=0}^{+\infty}(2^jt)^{-\lambda}\leq C(1+t)^{-\lambda},\quad\forall t\geq t_0,\eeq
where
\beq
\lambda=\frac{\kappa-1}{2}=\min\left\{\frac{\theta}{1-2\theta},\frac{\rho}{2}\right\}>0.\label{lamb}
\eeq
Therefore,
\beq\nonumber\int_t^{+\infty}\|\phi_t\|_{(H^1(\Omega))'} ds\leq C\int_t^{+\infty}\mathcal{Y}(s)ds\leq C(1+t)^{-\lambda},\quad\forall\, t\geq t_0, \eeq
which yields the convergence rate of $\phi$ in $(H^1(\Omega))'$:
\beq
\nonumber\|\phi(t)-\phi_{\infty}\|_{(H^1(\Omega))'}\leq C(1+t)^{-\lambda},\quad\forall\, t\geq t_0.
\eeq
Using the interpolation inequality and the uniform $H^2$-estimates for $\phi$, we have for any $s\in [-1,2]$,
\bea
\|\phi(t)-\phi_{\infty}\|_{H^{s}}&\leq& C\|\phi(t)-\phi_{\infty}\|_{(H^1(\Omega))'}^{\frac{2-s}{3}}\|\phi(t)-\phi_{\infty}\|_{H^{2}}^{\frac{s+1}{3}}\non\\
&\leq& C(1+t)^{-\frac{2-s}{3}\lambda},\quad\forall\, t\geq t_0.\label{rate1}
\eea
The proof of Theorem \ref{T3} is complete.

\begin{remark}
If the external source term $S$ is more regular, further decay property can be obtained. For instance, if in addition $S\in L^2(\tau,+\infty;\dot H^1(\Omega))\cap H^1(\tau,+\infty; \dot H^{-2}(\Omega))$, then using the energy method (see e.g., \cite{WW2012,ZWH,GWZ06}), we can prove
\beq\nonumber \lim_{t\to+\infty}(\|\phi(t)-\phi_\infty\|_{H^3}+\|\ub(t)\|+\|p(t)\|_{H^1})=0.\eeq
Moreover, the convergence rate \eqref{rate1} can be improved such that
\beq
\|\phi(t)-\phi_{\infty}\|_{H^2}\leq C(1+t)^{-\lambda},\quad\forall\, t\geq t_0,\non
\eeq
where the exponent $\lambda$ is given in \eqref{lamb}.
\end{remark}

\section{Appendix}

We first recall the following Gronwall-type inequality (see \cite[Lemma 2.5]{GGP}):
\begin{lemma}
\label{Gron0}
Let $y(t)$, $f(t)$ and $g(t)$ be nonnegative locally integrable functions on $[\tau,+\infty)$ which satisfy, for some $\gamma>0$
\beq
\frac{d}{dt}y(t)+\gamma y(t)\leq f(t)y^\frac12(t)+g(t) \qquad\text{for a.e. }\;\;t\in[\tau,+\infty).
\eeq
Then
\beq y(t)\leq2y(\tau)e^{-\gamma(t-\tau)}+\left(\int_{\tau}^tf(s)e^{-\frac{\gamma}{2}(t-s)}ds\right)^2+2\int_{\tau}^t g(s)e^{-\gamma(t-s)}ds\eeq
for any $t\in[\tau,+\infty)$. Moreover, the inequality
\beq
\int_{\tau}^t m(s)e^{-\gamma(t-s)}ds\leq\frac{e^{\gamma}}{1-e^{-\gamma}}\sup\limits_{r\geq\tau}\int_r^{r+1}m(s)ds
\eeq holds for any nonnegative locally
 integrable function $m$ on $[\tau,+\infty)$ and any $\gamma>0.$
\end{lemma}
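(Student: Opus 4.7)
The plan is to establish the two assertions separately. The first, being a nonlinear Gronwall statement, will follow from an integrating-factor substitution combined with a Bihari-type reduction; the second is a routine geometric estimate on the convolution kernel.

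For the differential inequality, I would set $A(t) := e^{\gamma t} y(t)$, $\tilde f(t) := e^{\gamma t/2} f(t)$, $\tilde g(t) := e^{\gamma t} g(t)$. A direct computation transforms the hypothesis into $A'(t) \leq \tilde f(t)\sqrt{A(t)} + \tilde g(t)$, which upon integration on $[\tau, t]$ becomes the Volterra-type inequality
$$A(t) \leq \alpha(t) + \int_{\tau}^{t} \tilde f(s)\sqrt{A(s)}\, ds, \qquad \alpha(t) := A(\tau) + \int_{\tau}^{t} \tilde g(s)\, ds,$$
with $\alpha$ nondecreasing. The main technical obstacle is to extract a sharp bound on $A$ from this nonlinear inequality. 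My plan is to denote the right-hand side by $V(t)$ (so $A \leq V$), observe that $V' = \alpha' + \tilde f\sqrt{A} \leq \alpha' + \tilde f \sqrt{V}$, and then divide by $2\sqrt{V}$. Using $V \geq \alpha > 0$ (after an $\epsilon$-regularization that can be removed at the end if $\alpha$ could vanish) yields
$$\bigl(\sqrt{V}\bigr)'(t) \leq \bigl(\sqrt{\alpha}\bigr)'(t) + \tfrac{1}{2}\tilde f(t),$$
which integrates, using $V(\tau)=\alpha(\tau)$, to $\sqrt{A(t)} \leq \sqrt{\alpha(t)} + \tfrac12\int_\tau^t \tilde f(s)\, ds$.

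Squaring and applying $(a+b)^2 \leq 2a^2 + 2b^2$ separates the contributions of $\alpha$ and the $\tilde f$-integral; unwinding the substitution by multiplying through by $e^{-\gamma t}$ then produces
$$y(t) \leq 2 y(\tau) e^{-\gamma(t-\tau)} + \tfrac{1}{2}\Bigl(\int_\tau^t f(s) e^{-\gamma(t-s)/2}\, ds\Bigr)^{2} + 2\int_\tau^t g(s) e^{-\gamma(t-s)}\, ds,$$
which is in fact slightly sharper than the stated bound and trivially implies it.

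For the second assertion the argument is purely combinatorial. I would partition $[\tau, t]$ from the right into unit blocks $I_j = [t - j - 1, t - j]$ for $j = 0, 1, \ldots, N$, truncating the final block at $\tau$ when necessary. Since $e^{-\gamma(t-s)} \leq e^{-\gamma j}$ on $I_j$, each block integral is dominated by $e^{-\gamma j} M$, where $M := \sup_{r\ge\tau}\int_r^{r+1} m(s)\, ds$ (possibly after enlarging the partial block up to a full unit interval, which accounts for the extra factor $e^{\gamma}$ in the stated constant). Summing the geometric series $\sum_{j \geq 0} e^{-\gamma j} = (1-e^{-\gamma})^{-1}$ then delivers the claim. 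This step carries no conceptual difficulty beyond careful bookkeeping at the endpoint $\tau$.
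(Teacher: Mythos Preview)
Your argument is correct on both counts; in fact the constants you obtain are slightly sharper than those stated (the factor $\tfrac12$ in front of the squared integral, and the constant $(1-e^{-\gamma})^{-1}$ without the extra $e^{\gamma}$ in the kernel estimate). The paper itself does not supply a proof of this lemma at all: it is simply quoted from \cite[Lemma~2.5]{GGP} and used as a black box in the appendix. So there is nothing to compare against beyond noting that your self-contained derivation recovers (a strengthening of) the cited result.
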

The above lemma easily yields the following result

\begin{corollary}\label{Gron0a} Let $y(t)$, $f(t)$ and $g(t)$ be the nonnegative locally integrable functions on $[\tau,+\infty)$ that satisfy the assumptions in Lemma \ref{Gron0}. Assume, in addition that
\beq \sup\limits_{t\geq\tau}\int_t^{t+1}f(s)ds\leq A_1\qquad\text{and}\quad\sup\limits_{t\geq\tau}\int_t^{t+1}g(s)ds\leq A_2\eeq for some positive constants $A_1,A_2$.
Then
\beq\label{gronwall0} y(t)\leq2y(\tau)e^{-\gamma(t-\tau)}+Q(\gamma,A_1,A_2)\eeq where
\beq Q(\gamma,A_1,A_2)=\left(\frac{e^{\frac{\gamma}{2}}}{1-e^{-\frac{\gamma}{2}}}A_1\right)^2+\frac{2e^{\gamma}}{1-e^{-\gamma}}A_2.\eeq
\end{corollary}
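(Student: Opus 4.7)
\textbf{Proof proposal for Corollary \ref{Gron0a}.} The plan is to apply Lemma \ref{Gron0} directly and then to control the two integrals that appear in its conclusion by invoking the second inequality of the same lemma. This is essentially a bookkeeping exercise, with no new analytic ideas required.

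First, since $y$, $f$, $g$ satisfy the hypothesis of Lemma \ref{Gron0}, for any $t\geq\tau$ we obtain
\beq
y(t)\leq 2y(\tau)e^{-\gamma(t-\tau)}+\left(\int_{\tau}^t f(s)e^{-\frac{\gamma}{2}(t-s)}ds\right)^2+2\int_{\tau}^t g(s)e^{-\gamma(t-s)}ds.\non
\eeq
Next, I would apply the second inequality of Lemma \ref{Gron0} twice. Taking $m=f$ and replacing the parameter $\gamma$ there by $\gamma/2$ (which is admissible, as the second inequality holds for \emph{any} positive exponent), together with the uniform bound $\sup_{r\geq \tau}\int_r^{r+1}f\,ds\leq A_1$, yields
\beq
\int_{\tau}^t f(s)e^{-\frac{\gamma}{2}(t-s)}ds\leq \frac{e^{\gamma/2}}{1-e^{-\gamma/2}}A_1.\non
\eeq
Similarly, taking $m=g$ with the original exponent $\gamma$ and using $\sup_{r\geq \tau}\int_r^{r+1}g\,ds\leq A_2$ gives
\beq
\int_{\tau}^t g(s)e^{-\gamma(t-s)}ds\leq \frac{e^{\gamma}}{1-e^{-\gamma}}A_2.\non
\eeq
Substituting these two bounds into the inequality from Lemma \ref{Gron0} immediately produces \eqref{gronwall0} with $Q(\gamma,A_1,A_2)$ as defined in the statement.

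There is really no main obstacle here; the only point worth checking carefully is that the second inequality of Lemma \ref{Gron0} may be invoked with the parameter $\gamma/2$ in the first application, which is legitimate since the lemma allows an arbitrary positive exponent. Everything else is a direct substitution, so the argument is extremely short and the corollary is a genuine packaging of Lemma \ref{Gron0} under the additional translation-boundedness assumptions on $f$ and $g$.
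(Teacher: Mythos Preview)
Your proof is correct and is exactly the argument the paper has in mind: it states the corollary as an immediate consequence of Lemma~\ref{Gron0} (``The above lemma easily yields the following result'') without writing out any details. Your only nontrivial observation---that the second inequality of Lemma~\ref{Gron0} must be applied with exponent $\gamma/2$ to handle the $f$-integral---is precisely the point.
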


The result in Corollary \ref{Gron0a} can be generalized. Namely, we have

\begin{lemma}\label{Gron1}
Let $y(t)$, $f(t)$ and $g(t)$ be nonnegative locally integrable functions on $[\tau,+\infty)$ which satisfy,
 for some $\gamma>0$ and some $\omega\in\{a_n\}_{n=0}^{\infty}$ with $a_n:=\frac{n+1}{n+2}$, $(n=0,1,2,...)$
\beq\label{gr1}\frac{d}{dt}y(t)+\gamma y(t)\leq f(t)y^{\omega}(t)+g(t) \qquad\text{for a.e. }\;\;t\in[\tau,+\infty)\eeq and such that
\beq\nonumber \sup\limits_{t\geq\tau}\int_t^{t+1}f(s)ds\leq A_1\qquad\text{and}\quad\sup\limits_{t\geq\tau}\int_t^{t+1}g(s)ds\leq A_2\eeq for
 some positive constants $A_1,A_2$. Then
\beq\label{gr0} y(t)\leq4\left(4^{\alpha_n}2^{\beta_n}y(\tau)e^{-\theta_n{\gamma}(t-\tau)}+Q^{\beta_n}(\frac{\gamma}{2},A_1,A_2)\right)\eeq
for any $t\in[\tau,+\infty)$, where
\beq\nonumber \alpha_n=
\begin{cases}
0,\qquad\qquad\qquad \quad  \!\text{if}\ n=0,\\
(n+2)\displaystyle{\sum\limits_{j=2}^{n+1}\frac{1}{j}},\qquad \text{if}\ n\geq 1,
\end{cases}
\quad \beta_n=\frac{n+2}{2},\quad\theta_n=\frac{n+2}{2^{n+1}},
\eeq and $Q$ is the same as in Lemma \ref{Gron0}.
\end{lemma}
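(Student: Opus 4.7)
The proof is by induction on $n$. For the base case $n = 0$, hypothesis \eqref{gr1} reads $y' + \gamma y \leq f y^{1/2} + g$, and conclusion \eqref{gr0} (with $\alpha_0 = 0$, $\beta_0 = 1$, $\theta_0 = 1$) follows immediately from Corollary \ref{Gron0a}, the constants $2$ and $Q(\gamma,A_1,A_2)$ appearing there being accommodated by the outer prefactor $4$ and the weakened argument $\gamma/2$ of $Q^{\beta_0}$ in \eqref{gr0}.

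For the inductive step from $n-1$ to $n$, the key device is the power substitution $y = u^{k_n}$ with $k_n = (n+2)/(n+1) = 1/a_n$, chosen so that $k_n a_n = 1$. Assuming $y > 0$ on $[\tau,+\infty)$ (otherwise replace $y$ by $y + \varepsilon$, run the argument, and let $\varepsilon \to 0^+$), we have $y' = k_n u^{k_n - 1} u'$; then using $k_n a_n - k_n + 1 = 2 - k_n = n/(n+1) = a_{n-1}$ and dividing \eqref{gr1} through by $u^{k_n - 1}$ transforms the inequality into
\beq
k_n u'(t) + \gamma u(t) \leq f(t)\, u(t)^{a_{n-1}} + g(t)\, u(t)^{1 - k_n},\qquad 1 - k_n = -\frac{1}{n+1}.\non
\eeq
The nonlinear exponent on the right is now $a_{n-1}$, matching the inductive hypothesis, at the price of an additional lower-order perturbation $g\,u^{-1/(n+1)}$ with a (mildly) negative exponent on $u$.

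The principal obstacle is controlling this perturbation. The plan is to split the time axis into the regimes $\{u \geq 1\}$ and $\{u < 1\}$. On $\{u \geq 1\}$ the perturbation is harmless since $u^{1 - k_n} \leq 1$, so the inequality reduces to $k_n u' + \gamma u \leq f u^{a_{n-1}} + g$, which after rescaling by $k_n$ falls squarely within the inductive hypothesis and yields a bound for $u$ in the $(n-1)$ form, with initial datum either $u(\tau)$ (on the first excursion) or the transition value $u = 1$ on subsequent ones. On the complementary set $\{u < 1\}$ we have $y = u^{k_n} < 1$, and the conclusion \eqref{gr0} is automatic because its right-hand side always dominates $1$ via the outer factor $4$. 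Patching these two regimes at the common boundary values $u = 1$ and then raising the resulting bound for $u$ to the power $k_n$ via the elementary inequality $(a + b)^{k_n} \leq 2^{k_n - 1}(a^{k_n} + b^{k_n})$ returns to $y$ and gives \eqref{gr0}.

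The delicate remaining task is the bookkeeping of constants through the substitution: the identity $k_n \beta_{n-1} = \beta_n$ accounts for the growth of the exponent of $Q$, the recursive halving of the effective $\gamma$ forced by the Young-type absorption needed to handle the boundary transitions produces the decay rate $\theta_n \gamma = (k_n \theta_{n-1}/2)\gamma$, and the accumulation of multiplicative factors from the inductive hypothesis together with the additional factor introduced at each level yields exactly $\alpha_n = (n+2)\sum_{j=2}^{n+1} 1/j$. The structural heart of the argument is the power substitution combined with the case-split taming the negative-exponent perturbation; the main challenge in a rigorous write-up is verifying that the constants emerge in precisely the stated form.
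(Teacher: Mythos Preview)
Your overall strategy---induction on $n$ via the power substitution $\varphi = y^{\omega}$ so that the exponent drops from $a_n$ to $a_{n-1}$---is exactly the paper's. The divergence is in how you handle the negative-exponent perturbation $g\,\varphi^{1-1/\omega}$. You split into the regimes $\{\varphi \geq 1\}$ and $\{\varphi < 1\}$ and patch at the crossings. The paper instead disposes of the issue in one line: it assumes without loss of generality that $y(t)\geq 1$ for all $t$, by replacing $y$ with $\tilde y = y+1$ (which satisfies the same differential inequality since $y^\omega \leq \tilde y^\omega$). Then $\varphi = y^\omega \geq 1$ globally, so $\varphi^{1-1/\omega} \leq 1$ and the perturbation is simply $\leq g$. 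No case analysis, no patching, no boundary bookkeeping.

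Your route has a genuine gap. On $\{\varphi < 1\}$ you assert that \eqref{gr0} holds ``automatically because its right-hand side always dominates $1$ via the outer factor $4$.'' This is false in general: the right-hand side is $4\bigl(4^{\alpha_n}2^{\beta_n}y(\tau)e^{-\theta_n\gamma(t-\tau)} + Q^{\beta_n}(\gamma/2,A_1,A_2)\bigr)$, and nothing prevents $y(\tau)=0$ and $A_1,A_2$ small enough that $4Q^{\beta_n} < 1$. Moreover, on each later excursion into $\{\varphi\geq 1\}$ you restart the inductive bound with initial value $1$ rather than $\varphi(\tau)$; showing that the resulting bound, raised to the power $k_n$, still fits under \eqref{gr0} with the \emph{stated} constants $\alpha_n,\beta_n,\theta_n$ is exactly the verification you concede is ``the main challenge''---and it does not obviously go through, since the absolute constant generated at each restart has to be absorbed into $Q^{\beta_n}$, not into the decaying term. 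The paper's $\tilde y = y+1$ trick sidesteps all of this; you should adopt it in place of the regime split. (Your recursion checks $k_n\beta_{n-1}=\beta_n$, $\theta_n = k_n\theta_{n-1}/2$, and $\alpha_n = k_n(1+\alpha_{n-1})$ are correct and match the paper's.)
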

\begin{proof}
Without loss of generality, we suppose that
$y(t)\geq 1$. Otherwise, we can simply set $\tilde{y}(t)=y(t)+1$. Using the fact $y^{\omega}<\tilde{y}^{\omega}$, we obtain a differential inequality for $\tilde{y}$ that has the same form as for $y$.

Then we prove the result by induction. The case $\omega=a_0=\frac{1}{2}$ corresponds to \eqref{gronwall0} in Corollary \ref{Gron0a}, with $\alpha_0=0$, $\beta_0=1$ and $\theta_0=\frac12$. Supposing that \eqref{gr0} holds for $\omega=a_n$ ($n\geq 0$), we consider the case $\omega=a_{n+1}$. Denote
$\varphi(t)=y^{\omega}(t)$. Then $y(t)=\varphi^{\frac{1}{\omega}}(t)$ and it holds that
\beq\nonumber\frac{d}{dt} \varphi(t)+\omega\gamma\varphi(t)\leq\omega f(t)\varphi^{2-\frac{1}{\omega}}(t)+\omega h(t),\eeq
where $$h(t)=\varphi^{1-\frac{1}{\omega}}(t)g(t).$$ Noticing that $\omega\in[\frac{1}{2},1)$, $\varphi(t)\geq 1$ and $2-\frac{1}{a_{n+1}}=a_n$, we have
$$h(t)\leq g(t)$$
and
\beq\nonumber \frac{d}{dt} \varphi(t)+\frac{\gamma}{2}\varphi(t)\leq f(t)\varphi^{a_n}(t)+\omega g(t).\eeq
Then it follows from the case  $\omega=a_n$ that
\beq\nonumber\varphi(t)\leq 4\left(4^{\alpha_n}2^{\beta_n}\varphi(\tau)e^{-\frac{\theta_n{\gamma}(t-\tau)}{2}}+Q^{\beta_n}(\frac{\gamma}{2},A_1,A_2)\right)\eeq
i.e.,
\beq\nonumber y^{\omega}(t)\leq4\left(4^{\alpha_n}2^{\beta_n}y^{\omega}(\tau)e^{-\frac{\theta_n{\gamma}(t-\tau)}{2}}+Q^{\beta_n}(\frac{\gamma}{2},A_1,A_2)\right).\eeq
Applying the elementary inequality\beq\nonumber(x+y)^{\theta}\leq 4(x^{\theta}+y^{\theta}), \qquad\text{for }x,y>0,\;1\leq\theta\leq 2\eeq
and noticing that $\frac{1}{\omega}\in(1,2]$, we get
\beq\nonumber
\begin{split}y(t)\leq&4\left(4^\frac{(1+\alpha_n)}{a_{n+1}}2^\frac{\beta_n}{a_{n+1}}y(\tau)e^{-\frac{(t-\tau)\gamma\theta_{n} }{2a_{n+1}}}+Q^\frac{\beta_n}{a_{n+1}}(\frac{\gamma}{2},A_1,A_2)\right)\end{split},\eeq
with
\beq\nonumber
\alpha_{n+1}=\frac{1+\alpha_n}{a_{n+1}},\quad \beta_{n+1}=\frac{\beta_n}{a_{n+1}},\quad \theta_{n+1}=\frac{\theta_{n} }{2a_{n+1}},
\eeq
such that \eqref{gr0} holds for $\omega=a_{n+1}$. This completes the proof.
\end{proof}
\begin{remark}
Since $a_n\nearrow1$ as $n\rightarrow+\infty,$ the above lemma enables
us to deal with the general case $\omega\in(\frac{1}{2},1)$ in \eqref{gr1}. On the other hand, when $\omega\in (0,\frac{1}{2})$, we can also employ Lemma \ref{Gron0}, thanks to Young's inequality such that $y^{\omega}\leq 2\omega y^{\frac{1}{2}}+(1-2\omega)$.
\end{remark}

The following lemma (cf. \cite{FS, HT01}) will be used to study the long-time behavior of global solutions to problem \eqref{1}--\eqref{ini}:
\begin{lemma}\label{f}
Let $\zeta\in(0,\frac{1}{2})$. Assume that $Z\geq0$ be a measurable function on $(\tau,+\infty)$, $Z\in L^2(\tau, +\infty)$ and there exist $C>0$ and
$t_0\geq\tau$ such that
\beq\nonumber \int_t^{\infty} Z^2(s)ds\leq CZ(t)^{\frac{1}{1-\zeta}},\quad\text{for a.e.   }t\geq t_0.\eeq
Then $Z\in L^1(t_0,+\infty).$
\end{lemma}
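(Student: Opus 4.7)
My plan is to reduce the hypothesis to a scalar differential inequality for the tail function $H(t) := \int_t^{+\infty} Z^2(s)\,ds$, deduce polynomial decay of $H$, and then estimate $\int Z$ by a Cauchy--Schwarz argument on dyadic intervals.

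First, since $Z \in L^2(\tau,+\infty)$, the function $H$ is well-defined, nonincreasing, and absolutely continuous on $[t_0,+\infty)$ with $H'(t) = -Z^2(t)$ a.e. If $H(t^*) = 0$ for some $t^* \geq t_0$, then $Z \equiv 0$ on $[t^*,+\infty)$ and the claim is trivial on that tail, so I may assume $H(t) > 0$ for all $t \geq t_0$. Raising the hypothesis $H(t) \leq C\, Z(t)^{1/(1-\zeta)}$ to the power $2(1-\zeta)$ yields
\[
 -H'(t) \;=\; Z^2(t) \;\geq\; C^{-2(1-\zeta)}\, H(t)^{2(1-\zeta)} \qquad \text{a.e. on } [t_0,+\infty).
\]
Since $\zeta \in (0,\tfrac12)$, the exponent satisfies $2(1-\zeta) > 1$. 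Setting $\alpha := 1 - 2\zeta > 0$, I multiply by $\alpha\, H(t)^{-2(1-\zeta)}$ and integrate from $t_0$ to $t$ to obtain $(H^{-\alpha})'(t) \geq \alpha\, C^{-2(1-\zeta)}$ a.e., whence
\[
 H(t) \;\leq\; K_1\, (1 + t - t_0)^{-1/(1-2\zeta)}, \qquad \forall\, t \geq t_0,
\]
for a constant $K_1$ depending only on $\zeta$, $C$ and $H(t_0)$.

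The remaining step is to combine this decay with Cauchy--Schwarz on dyadic intervals. Setting $T := t_0 + 1$, Cauchy--Schwarz gives
\[
 \int_{2^k T}^{2^{k+1} T} Z(s)\,ds \;\leq\; (2^k T)^{1/2}\, H(2^k T)^{1/2} \;\leq\; K_2\, (2^k T)^{\,\tfrac{1}{2} - \tfrac{1}{2(1-2\zeta)}} \;=\; K_2\, (2^k T)^{-\zeta/(1-2\zeta)}.
\]
Since $\zeta/(1-2\zeta) > 0$, the geometric series $\sum_{k\geq 0} 2^{-k\zeta/(1-2\zeta)}$ converges, and combining with the trivial bound $\int_{t_0}^T Z \leq (T - t_0)^{1/2}\, H(t_0)^{1/2}$ on the initial segment yields $Z \in L^1(t_0, +\infty)$. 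The only mildly delicate points are justifying the pointwise a.e.\ differential inequality for $H$ (which follows from absolute continuity of $H$ together with the Lebesgue differentiation theorem) and the degenerate case $H \equiv 0$ on a tail; once these are dispatched, the argument is a routine ODE comparison followed by a geometric summation, with the super-linearity $2(1-\zeta)>1$ produced by the assumption $\zeta<\tfrac12$ being the essential ingredient.
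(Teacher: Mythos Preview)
Your argument is correct and is precisely the standard proof of this lemma: reduce to the scalar ODE inequality $-H' \geq c\,H^{2(1-\zeta)}$ for the tail $H(t)=\int_t^\infty Z^2$, extract polynomial decay $H(t)\lesssim (1+t-t_0)^{-1/(1-2\zeta)}$ from the super-linear exponent $2(1-\zeta)>1$, and then sum $\int Z$ over dyadic blocks via Cauchy--Schwarz. The paper does not supply its own proof of this lemma; it merely cites \cite{FS, HT01}, where exactly this argument appears, so your approach coincides with the one the authors have in mind.
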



\bigskip

\noindent\textbf{Acknowledgments:} J. Jiang is partially supported by National Natural Science Foundation of China (NNSFC) under the grants No. 11201468, H. Wu is partially supported by NNSFC under the grant No. 11371098 and Zhuo Xue program in Fudan University, and S. Zheng is partially supported by NNSFC under the grant No. 11131005.


\end{document}